\newtheorem{theorem}{Theorem}[section]
\newtheorem{lemma}[theorem]{Lemma}
\newtheorem{corollary}[theorem]{Corollary}
\newtheorem{proposition}[theorem]{Proposition}
\newtheorem{lem-def}[theorem]{Lemma-Definition}
\DeclareRobustCommand\longtwoheadrightarrow
\newcommand{\hooklongrightarrow}{\lhook\joinrel\longrightarrow}
\newcommand{\I}{\mathbb I}
\newcommand{\R}{\mathbb R}
\newcommand{\N}{\mathbb N}
\newcommand{\Z}{\mathbb Z}
\newcommand{\Q}{\mathbb Q}
\def\op{\operatorname}
\def\as#1{\renewcommand\arraystretch{#1}}
\def\bs{\vskip.5cm}
\def\cc{{\,\mathcal C}}
\def\cci{{\,\mathcal C}\infty}
\def\cmpl{^{\mbox{\tiny $\op{cmpl}$}}}
\def\cut{{\op{cut}}}
\def\cutcl#1{\left[#1\right]_\cut}
\def\cuts{\op{Cuts}}
\def\cv{\op{Conv}}
\def\cofin{\op{cofin}}
\def\coini{\op{coini}}
\def\d{\Delta}
\def\defn{\noindent{\bf Definition. }}
\def\e{\medskip}
\def\ep{\epsilon}
\def\g{\Gamma}
\def\gbc{\g_{\op{bc}}}
\def\gd{\g(D)}
\def\gen#1{\big\langle\, {#1} \,\big\rangle}
\def\gm{\g_\mu}
\def\gn{\g_\nu}
\def\gnbc{\g_{\op{nbc}}}
\def\gpm{\g^\pm}
\def\gq{\g_\Q}
\def\gsme{\g_{\op{sme}}}
\def\ha{H(a)}
\def\hg{\mathbf{H}(\g)}
\def\hra{\hooklongrightarrow}
\def\hq{\mathbf{H}(\gq)}
\def\hk{\hookrightarrow}
\def\imp{\ \Longrightarrow\ }
\def\inii{\op{Init}(I)}
\def\iso{\lower.3ex\hbox{\as{.08}$\begin{array}{c}\lra\\\mbox{\tiny $\sim\,$}\end{array}$}}
\def\ism{\lower.3ex\hbox{\as{.08}$\begin{array}{c}\,\to\\\mbox{\tiny $\sim\,$}\end{array}$}}
\def\k{\op{Ker}}
\def\ka{\kappa}
\def\kp{\op{KP}}
\def\kpm{\op{KP}(\mu)}
\def\kx{K[x]}
\def\la{\lambda}
\def\La{\Lambda}
\def\lg{l\raise.6ex\hbox to.2em{\hss.\hss}l}
\def\ll{\mathcal{L}}
\def\lra{\,\longrightarrow\,}
\def\lx{\operatorname{lex}}
\def\md#1{\; \mbox{\rm(mod }{#1})}
\def\opp{^{\op{opp}}}
\def\p{\mathfrak{p}}
\def\pcv{\op{Prin}}
\def\ppr{^{\mbox{\tiny $\op{pr}$}}}
\def\rk{\op{rk}}
\def\rii{\R^\I_{\lx}}
\def\rll{\R_{\op{sme}}}
\def\rlex{\R^I_{\lx}}
\def\rrk{\op{rr}}
\def\sii{\ \Longleftrightarrow\ }
\def\Sme{{\op{Sme}}}
\def\sme{{\op{sme}}}
\def\ss{\mathcal{S}}
\def\supp{\op{supp}}
\def\sval{\op{sv}}
\def\tea{t_{\ep,a}}
\def\tsim{\mathcal{T}_\sim}
\def\tz{\mathcal{T}}
\def\val{\op{val}}
\def\xx{\mathcal{X}}
\title[Cuts and small extensions]{Cuts and small extensions of abelian ordered groups}
\subjclass[2010]{Primary 06F20, 13A18; Secondary 12J20, 14E15}
\thanks{Partially supported by grant PID2020-116542GB-I00 from the Spanish Research Agency}
\author[Kuhlmann]{Franz-Viktor Kuhlmann}
\address{}
\email{}
\author[Nart]{Enric Nart}
\address{Departament de Matem\`{a}tiques,         Universitat Aut\`{o}noma de Barcelona,         Edifici C, E-08193 Bellaterra, Barcelona, Catalonia}
\email{nart@mat.uab.cat}
\date{September 26, 2021}
\keywords{ordered abelian group, cut, cofinality, invariance group, small extension, valuation}
\begin{document}

\begin{abstract}
We classify cuts in (totally) ordered abelian groups $\g$ and
compute the coinitiality and cofinality of all cuts in case $\g$ is divisible, in terms of 
data intrinsically associated to the invariance group of the cut. We relate cuts with small extensions of $\g$ in a natural way, which leads to an explicit construction of a totally ordered real vector space containing realizations of all cuts. This construction is applied to the problem of classifying all extensions of the valuation from a given valued field $K$ to the rational function field $K(x)$.
\end{abstract}

\maketitle

\section*{Introduction}

Cuts in abelian ordered groups (or ordered fields) are a very useful tool to attack certain problems in several disciplines where ordered structures play a significant role. Many of
them are briefly discussed in the survey \cite{KuhlCuts2}, with the most important of them
arising in the theory of ordered and valued fields.

For the basic notions we use, see Sections \ref{sectbagr} and \ref{sectcuts}.
The aim of this paper is to compute the coinitiality and cofinality of all cuts in a divisible totally ordered abelian group, in terms of data intrinsically associated to the  invariance group of the cut (see \cite{KuhlCuts2} for the origins of this notion).

Let $\g$ be a divisible totally ordered abelian group. 
For every cut $D$ in $\g$, let $H_D\subset\g$ be the invariance group of $D$. The cut $D$ is said to be a \emph{ball cut} if $D/H_D$ is a principal cut in the quotient group $\g/H_D$.


In Section \ref{secSmall}, we exhibit a natural relationship between cuts and \emph{small extensions} of $\g$.  
An order-preserving extension $\g\hk\La$ of ordered groups is said to be \emph{small} if $\La/\g$ is a cyclic group.
Every cut $D$ in $\g$ determines a natural small extension $\g\subset \gd$ where the cut has a realization,  and we show that ball cuts are characterized by the fact that the extension $\g\subset\gd$ increases the rank (Theorem \ref{ballsIncrease}). 

In Section \ref{secRealVS}, we use Hahn's embedding theorem to construct a totally ordered  real vector space containing all small extensions of $\g$, up to order-preserving $\g$-isomorphism. The subspace containing all rank-preserving small extensions is clearly distinguished.
These real vector spaces are explicit enough to facilitate the computation of the coinitiality and cofinality of all non-ball cuts (Theorem \ref{CofNB}) and all ball cuts (Theorem \ref{CofB}). Let us mention that for the case of ordered fields $K$ the similar task of constructing a universal extension that contains the rational function fields 
$K(x_1,\ldots,x_n)$ with all possible extensions of the ordering was dealt with in \cite{KKMZ}. 



In Section \ref{secVals}, we apply these real models containing all small extensions, to the problem of classifying valuations on polynomial rings $K[x]$, up to equivalence. As these 
uniquely determine their extensions to the quotient field $K(x)$, this is a contribution to the ongoing research on the extensions of the valuation from a given valued field $K$ to the rational function field $K(x)$.

All methods of the paper are developed for arbitrary ordered abelian groups, and the condition of divisibility is used only when this is strictly necessary.




\section{Background on ordered abelian groups}              \label{sectbagr}
In this section, we collect some basic facts on ordered abelian groups, mainly extracted from \cite{KuhlBook} and \cite{Rib}.

\subsection{Ordered sets and cuts}

Throughout the paper, an \emph{ordered set} will be a set equipped with a total ordering. We agree that $0\not\in\N$.\e

\noindent{\bf Notation. }Let $I,\,J$ be ordered sets.

\begin{itemize}
\item $I\infty$ is the ordered set obtained by adding a (new) maximal element, which is formally denoted as $\infty$. 
\item $I\opp$ is the ordered set obtained by reversing the ordering of $I$.
\item For all $i\in I$, we denote $I_{< i}=\{j\in I\mid j< i\}\subset I_{\le i}=\{j\in I\mid j\le i\}$. 

We attribute a similar meaning to $I_{>i}\subset I_{\ge i}$. 
\item For $S,T\subset I$ and $i\in I$, the following expressions
$$i<S,\ \quad i>S,\ \quad i\le S,\ \quad i\ge S,\ \quad S<T,\ \quad S\le T$$
mean that the corresponding inequality holds for all $s\in S$ and all $t\in T$. 
\item For $S\subset I$, we denote by $S^c=I\setminus S$ the complementary subset. 
\item $I+J$ is the disjoint union $I\sqcup J$ with the total ordering which respects the orderings of $I$ and $J$ and satisfies $I<J$.
\end{itemize}

A mapping $\iota\colon I\to J$  is an \emph{embedding} if it strictly preserves the ordering.
We also say that $\iota\colon I\to J$ is an \emph{extension} of $I$. 

An \emph{isomorphism} of ordered sets is an onto embedding.
The \emph{order-type} of an ordered set is the class of this set up to isomorphism. \e

An \emph{initial segment} of $I$ is a subset $S\subset I$ such that $I_{\le i}\subset S$ for all $i\in S$.
A \emph{final segment} of $I$ is a subset $T\subset I$ such that $I_{\ge i}\subset T$ for all $i\in T$.

On the set $\inii$ of all initial segments of $I$ we consider the ordering determined by ascending inclusion. It has a minimal and a maximal element:
$$
\emptyset=\min(\inii),\qquad I=\max(\inii).
$$

A \emph{cut} in $I$ is a pair $D=(D^L,D^R)$ of subsets such that $$D^L< D^R\quad\mbox{ and }\quad D^L\cup D^R=I.$$ Clearly, $D^L$ is an initial segment of $I$ and $D^R=(D^L)^c$ is a final segment. 

If $\cuts(I)$ denotes the set of all cuts in $I$, we have an isomorphism of ordered sets
$$
\inii\lra\cuts(I),\qquad S  \longmapsto (S,S^c).
$$
In particular, $\cuts(I)$ admits a minimal element $(\emptyset,I) $and a maximal element $(I,\emptyset)$, which are called the  \emph{improper cuts}.

For all $M\subset I$, we denote by  $M^+$, $M^-$ the cuts determined by the initial segments
\[
M^+=\{i\in I\mid \exists m\in M: i\leq m\},\qquad M^-=\{i\in I\mid i<M\}.
\]

If $M=\{i\}$, we will write $i^+=I_{\le i}$ instead of $\{i\}^+$ and $i^-=I_{<i}$ instead of $\{i\}^-$. These cuts are said to be 
\emph{principal}. 

Hence, a cut $D$ is principal if  either $D^L$ has a maximal element, or $D^R$ has a minimal element. A cut for which these two conditions hold simultaneously is called a \emph{gap}, or a \emph{jump} according to different authors.

If $I\hk J$ is an extension of ordered sets and $j \in J$ satisfies
$D^L\leq j\leq D^R$, then we will say that \emph{$j$
realizes the cut $D$} in $J$.

Every cut $D$ has associated an important invariant $$(\ka(D),\la(D)),$$ which is a pair of regular cardinals. The cardinal $\ka(D)$ is the cofinality of $D^L$ (least cardinality of any cofinal subset). Dually, $\la(D)$ is the coinitiality of $D^R$ (least cardinality of any coinitial subset).

\subsection{Ordered groups}

An \emph{ordered group} $(\g,<)$ is an (additive) abelian group $\g$ equipped with a total ordering $<$, which is compatible with the group structure.

For all $a\in\g$, we denote $|a|=\max(a,-a)$.

An ordered group has no torsion. In fact, all nonzero $a\in\g$  satisfy $$n|a|\ge|a|>0,\quad \forall\,n\in\N.$$

An \emph{embedding/extension/isomorphism} of ordered groups is a group homomorphism which is simultaneously an embedding/extension/isomorphism  of ordered sets.

A basic example of ordered group is $\R^n_{\lx}$, the additive group $(\R^n,+)$ equipped with the lexicographical ordering. 
Also, any subgroup of an ordered group inherits the structure of an ordered group.


The \emph{divisible hull} of an ordered group $\g$ is the group $$\gq=\g\otimes_\Z\Q,$$ which inherits a natural ordering determined by the condition
$$
a\otimes(1/n)<b\otimes (1/m)\sii ma<nb,
$$
for all $n,m\in\N$ and all $a,b\in\g$.

Since $\g$ has no torsion, it may be embedded in a unique way into $\gq$ as an ordered group. 
The divisible hull of $\g$ is the minimal divisible extension of $\g$.

\begin{lemma}\label{MInDiv}
For any embedding $\iota\colon\g\hk \La$ into a divisible ordered group $\La$, there exists a unique embedding of $\gq$ into $\La$ such that $\iota$ coincides with the composition $\g\hk \gq\hk\La$. 
\end{lemma}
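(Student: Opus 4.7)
The plan is to build $\tilde\iota\colon\gq\hk\La$ by sending each simple tensor $a\otimes(1/n)$ to the unique element $b\in\La$ satisfying $nb=\iota(a)$. Such a $b$ exists because $\La$ is divisible, and it is unique because $\La$, being an ordered group, is torsion-free (as recalled just before the lemma). This construction is forced: any homomorphism $\tilde\iota$ extending $\iota$ must send $a\otimes(1/n)$ to an element whose $n$-fold sum equals $\iota(a)$, and torsion-freeness of $\La$ pins down that element. So uniqueness is essentially free once existence is established.

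For well-definedness, suppose $a\otimes(1/n)=a'\otimes(1/m)$ in $\gq$, equivalently $ma=na'$ in $\g$. Writing $nb=\iota(a)$ and $mb'=\iota(a')$ in $\La$, one computes $nm(b-b')=m\iota(a)-n\iota(a')=\iota(ma-na')=0$, and torsion-freeness forces $b=b'$. Additivity of $\tilde\iota$ follows by the same trick: if $\tilde\iota(a\otimes(1/n))=b$ and $\tilde\iota(a'\otimes(1/m))=b'$, then $nm(b+b')=m\iota(a)+n\iota(a')=\iota(ma+na')$, showing $b+b'=\tilde\iota((ma+na')\otimes(1/nm))=\tilde\iota(a\otimes(1/n)+a'\otimes(1/m))$. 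That $\tilde\iota$ extends $\iota$ is visible from $a=a\otimes 1\mapsto\iota(a)$.

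The one step that needs a little care is that $\tilde\iota$ strictly preserves the ordering. By the defining condition of the ordering on $\gq$ recalled just above the lemma, one has $a\otimes(1/n)<a'\otimes(1/m)$ iff $ma<na'$ in $\g$, iff $m\iota(a)<n\iota(a')$ in $\La$ (since $\iota$ is order-preserving), iff $nm\,\tilde\iota(a\otimes(1/n))<nm\,\tilde\iota(a'\otimes(1/m))$; multiplying by the positive integer $nm$ preserves the ordering on $\La$, so this is equivalent to $\tilde\iota(a\otimes(1/n))<\tilde\iota(a'\otimes(1/m))$. In particular, $\tilde\iota$ is injective, hence an embedding of ordered groups, completing the proof.

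I expect no serious obstacle here: the entire argument rests on torsion-freeness of $\La$ (for uniqueness and well-definedness) together with the fact that multiplication by a positive integer is an order-preserving bijection on any divisible ordered group (for the order-preservation step). The only mild subtlety is keeping the bookkeeping straight when reducing two tensors to a common denominator $nm$, which is what makes both the additivity and the order-comparison computations work cleanly.
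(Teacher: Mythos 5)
The paper states this lemma without proof, and your argument is exactly the standard one it implicitly relies on: define the map on simple tensors $a\otimes(1/n)$ using divisibility, with torsion-freeness of $\La$ giving uniqueness, well-definedness, and additivity, and the defining condition $a\otimes(1/n)<a'\otimes(1/m)\Leftrightarrow ma<na'$ giving strict order-preservation. Your proof is correct and complete, so there is nothing to add.
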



Given a subgroup $H\subset \g$, the quotient $\g/H$ inherits a structure of ordered group if and only if $H$ is a \emph{convex} subgroup; that is, 
$$
h\in H \imp  \left\{x\in \g\mid |x|\le|h|\right\}\subset H.
$$
In this case, we may define an ordering in $\g/H$ by:
$$
a+H<b+H \sii a+H\ne b+H \ \mbox{ and }\ a<b.
$$

The notation $a+H<b+H$ is compatible with the natural meaning of such an inequality for arbitrary subsets of $\g$. 
The convex subgroups of $\g/H$ are in 1-1 correspondence with the convex subgroups of $\g$ containing $H$. 

\begin{lemma}
Let $f\colon \g\to\d$ be an order-preserving group homomorphim between two ordered groups. Then, $\k(f)$ is a convex subgroup of $\g$ and the natural isomorphism between $\g/\k(f)$ and $f(\g)$ is order-preserving too.    
\end{lemma}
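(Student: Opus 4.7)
The plan is to prove the two assertions in the order they are stated. For convexity of $\k(f)$, I would take any $h\in\k(f)$ and any $x\in\g$ with $|x|\le|h|$, and aim to conclude $x\in\k(f)$. The key observation is that $f(|h|)=0$: since $|h|\in\{h,-h\}$ and $f$ is a group homomorphism killing $h$, it also kills $-h$, so either way $f(|h|)=0$. Then the inequalities $-|h|\le x\le|h|$ (which follow from $|x|\le|h|$ together with $-|x|\le x\le|x|$), combined with $f$ being order-preserving, sandwich $f(x)$ between $-f(|h|)=0$ and $f(|h|)=0$, forcing $f(x)=0$.

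Once convexity of $\k(f)$ is secured, the quotient ordering on $\g/\k(f)$ is defined as in the preceding paragraph, and the standard first-isomorphism-theorem map $\bar f\colon \g/\k(f)\to f(\g)$, $a+\k(f)\mapsto f(a)$, is a well-defined group isomorphism. To verify it is order-preserving, I would unfold the definition of the quotient order: $a+\k(f)<b+\k(f)$ means $a+\k(f)\ne b+\k(f)$ together with $a<b$. From $a<b$ and the order-preserving property of $f$ we get $f(a)\le f(b)$; from $a+\k(f)\ne b+\k(f)$ we get $b-a\notin\k(f)$, whence $f(b)\ne f(a)$. Together these force $f(a)<f(b)$, i.e.\ $\bar f(a+\k(f))<\bar f(b+\k(f))$ in $f(\g)$, as required.

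The main point to clarify — rather than a true obstacle — is the reading of ``order-preserving'' for a homomorphism that is not assumed injective. Since the paper reserves the word \emph{embedding} for strictly order-preserving maps, the natural reading for a general homomorphism is the weak one: $a\le b\Rightarrow f(a)\le f(b)$. Under this reading, both halves of the lemma reduce to essentially one-line sandwich arguments, and the only content-bearing step is the convexity of $\k(f)$, which unlocks the quotient construction needed for the second half.
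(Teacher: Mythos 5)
Your argument is correct. The paper actually states this lemma without proof, so there is nothing to compare against; your two sandwich arguments (one for the convexity of $\k(f)$ using $-|h|\le x\le |h|$, one for strict order-preservation of $\bar f$ by combining $f(a)\le f(b)$ with $f(a)\ne f(b)$) are exactly the standard verification the authors left implicit, and your reading of ``order-preserving'' in the weak sense is the only consistent one for a non-injective homomorphism.
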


\begin{lemma}
The convex subgroups of $\g$ are totally ordered by inclusion.
\end{lemma}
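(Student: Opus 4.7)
The plan is to argue by contradiction in the usual totality-from-trichotomy style: given two convex subgroups $H_1,H_2\subset\g$, I want to show that if $H_1\not\subset H_2$ then necessarily $H_2\subset H_1$. So I fix some element $h_1\in H_1\setminus H_2$ and aim to show that an arbitrary $h_2\in H_2$ lies in $H_1$.

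The key observation, which will drive the whole argument, is the contrapositive of the convexity axiom: if $H$ is convex, $h\in H$, and $|x|\le|h|$, then $x\in H$; equivalently, if $x\notin H$ and $h\in H$, then $|x|>|h|$. Applying this to $H_2$ with the elements $h_1\notin H_2$ and $h_2\in H_2$, I immediately get $|h_1|>|h_2|$. Then applying the forward direction of convexity to $H_1$, with the element $h_1\in H_1$ satisfying $|h_2|<|h_1|$, I conclude $h_2\in H_1$. Since $h_2$ was arbitrary in $H_2$, this gives $H_2\subset H_1$, completing the comparison.

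The argument is essentially a two-line contrapositive chase, so there is no real obstacle; the only thing to be careful about is the direction of the convexity condition as written in the excerpt (namely $h\in H$ and $|x|\le|h|$ forces $x\in H$), and the symmetric use of the two subgroups — one applied in contrapositive form to extract a strict inequality between the absolute values, the other applied in forward form to transfer membership. No divisibility, no quotient structure, and no appeal to the preceding lemmas on kernels or embeddings is needed.
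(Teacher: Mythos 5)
Your argument is correct and is essentially identical to the paper's own proof: the paper also fixes $a\in H\setminus H'$, uses the contrapositive of convexity of $H'$ to get $|b|<|a|$ for all $b\in H'$, and then convexity of $H$ to conclude $H'\subset H$. Nothing to add.
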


\begin{proof}
Let $H$, $H'$ be convex subgroups such that there exists $a\in H\setminus H'$.

Then, for all $b\in H'$ we must have $|b|<|a|$, so that $H'\subset H$. 
\end{proof}

\noindent{\bf Definition. }Let $\cc=\cv(\g)$ be the ordered set of all {\bf proper} convex subgroups, ordered by {\bf ascending} inclusion
$$
\{0\}\subset \cdots \subset H\subset \cdots  \subsetneq \g
$$ 
The order-type of $\cc$ is called the $\emph{rank}$ of $\g$, and is denoted $\rk(\g)$.

We identify $\cc\infty$ with the ordered set of all convex subgroups of $\g$, by letting $\infty$ represent the whole group $\g$. \e

\noindent{\bf Examples. }
\begin{itemize}
\item $\rk(\Z)=\rk(\Q)=\rk(\R)=1$. \e

\item $\rk(\R^n_{\lx})=n$. The sequence of convex subgroups is
$$
\{0_{\R^n}\}\ \subset \ \cdots\ \subset \{0\}^k\times\R^{n-k}_{\lx}\ \subset\ \cdots\ \subset \ \R^n_{\lx},
$$
\end{itemize}


For all $a\in\g$, we denote by $\ha$ the convex subgroup of $\g$ generated by $a$:
$$
\ha=\left\{b\in \g\,\mid\, |b|\le n|a| \ \mbox{ for some } n\in\N \right\}.
$$ 


These convex subgroups $\ha$ are said to be \emph{principal}. \e

\noindent{\bf Definition. }Let $I=\pcv(\g)$ be the ordered set of {\bf nonzero} convex principal subgroups of $\g$, ordered by {\bf descending} inclusion.

\e

We identify $I\infty$ with a set of indices parametrizing all principal convex subgroups of $\g$. For all $i\in I$ we denote by $H_i$ the corresponding  principal convex subgroup. We agree that $H_\infty=\{0\}$. According to our convention, for all $i,j\in I\infty$, we have
$$
i<j \sii H_i\supsetneq H_j.
$$

\begin{lemma}\label{allH}
Every convex subgroup $H\subset \g$ satisfies $H=\bigcup_{i\in I,\,H_i\subset H}H_i$.
\end{lemma}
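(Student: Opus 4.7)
The plan is to prove both set-theoretic inclusions, with the reverse inclusion being the only one with content. The direction $\bigcup_{i\in I,\,H_i\subset H}H_i\subseteq H$ is immediate from the indexing condition, so the task is to show that an arbitrary element of $H$ lies in some principal convex subgroup that is itself contained in $H$.

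For the direction $H\subseteq\bigcup_{i\in I,\,H_i\subset H}H_i$, the idea is to use the principal convex subgroup generated by each element. Fix $b\in H$; we may assume $b\neq 0$, since the case $b=0$ is trivial (any $H_i$ already contains $0$, so as long as at least one principal $H_i\subset H$ occurs in the union we are fine). Consider
\[
H(b)=\{c\in \g\mid |c|\le n|b|\text{ for some }n\in\N\}.
\]
I would argue $H(b)\subseteq H$ as follows: since $H$ is a subgroup containing $b$, it contains $|b|=\max(b,-b)$ and hence every integer multiple $n|b|$; then by convexity of $H$, every $c\in\g$ with $|c|\le n|b|$ lies in $H$. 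Since $b$ is nonzero, $H(b)$ is a nonzero principal convex subgroup, so by definition of $I=\pcv(\g)$ there is a unique $i\in I$ with $H_i=H(b)$. This $i$ satisfies $H_i\subset H$, and $b\in H(b)=H_i$, which puts $b$ in the desired union.

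The only subtlety worth flagging is the degenerate case $H=\{0\}$: the indexing set $\{i\in I:H_i\subset H\}$ is then empty because every $H_i$ is by convention nonzero, so the union is empty while $H$ contains $0$. The cleanest reading of the lemma is that it is stated for nonzero $H$, or equivalently that the empty union is interpreted inside the lattice of subgroups as $\{0\}$; either convention resolves the issue, and no real obstacle remains—the argument is essentially a one-line application of convexity to the principal hull $H(b)$.
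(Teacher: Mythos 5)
Your proof is correct and follows the same route as the paper's one-line argument: for each $b\in H$, the principal convex subgroup $H(b)$ is contained in $H$ by convexity, so $b$ lies in some $H_i\subset H$. Your remark about the degenerate case $H=\{0\}$ (empty index set) is a fair observation that the paper glosses over, but it does not affect the substance.
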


\begin{proof}
For all $a\in H$, the principal convex subgroup $H(a)$ is contained in $H$.
\end{proof}

\begin{corollary}
If $I$ is well-ordered, then all convex subgroups are principal. 
\end{corollary}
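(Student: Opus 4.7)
The plan is to apply Lemma \ref{allH} directly and exploit the well-ordering of $I$ to collapse the union into a single principal subgroup. Let $H\subset\g$ be an arbitrary convex subgroup. If $H=\{0\}$, then $H=H(0)$ is trivially principal, so assume $H\ne\{0\}$ and let $S=\{i\in I\mid H_i\subset H\}$. For any nonzero $a\in H$, the principal subgroup $H(a)$ lies in $H$ (since $H$ is convex and contains $a$), so $S$ is nonempty.

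Since $I$ is well-ordered, $S$ admits a minimum $i_0$. Recall that $I$ is ordered by \emph{descending} inclusion, so the minimum $i_0$ of $S$ corresponds to the \emph{largest} principal subgroup among the $H_i$ with $i\in S$: for every $i\in S$ we have $i_0\le i$, equivalently $H_i\subset H_{i_0}$. Consequently,
\[
\bigcup_{i\in S}H_i \;=\;H_{i_0}.
\]
By Lemma \ref{allH}, the left-hand side equals $H$, so $H=H_{i_0}$ is principal.

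The argument is essentially immediate once the conventions are unwound; there is no real obstacle. The only point that requires a moment of care is the order-reversal: one must remember that a minimum in $I$ means a maximum under inclusion, which is precisely what forces the union in Lemma \ref{allH} to reduce to a single term.
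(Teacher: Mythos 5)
Your proof is correct and follows the paper's own argument exactly: take the minimal element $i_0$ of $\{i\in I\mid H_i\subset H\}$ and apply Lemma \ref{allH} to conclude $H=H_{i_0}$. Your explicit treatment of the edge case $H=\{0\}$ (where that set is empty) and the careful unwinding of the descending-inclusion convention are fine additions, but the route is the same.
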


\begin{proof}
For any convex subgroup  $H$, the subset $\{i\in I\mid H_i\subset H\}\subset I$ has a minimal element $i_0$. By Lemma \ref{allH}, $H=H_{i_0}$.
\end{proof}

\subsection{Archimedean classes  and natural valuation}\label{secHahn}

Two elements $a,b\in\g$ are \emph{archimedean equivalent} if  they generate the same convex subgroup: $\ha=H(b)$.
This defines an equivalence relation on $\g$, whose quotient set is $I\infty$.

We say that $\g$ is \emph{archimedean} if all nonzero elements are archimedean equivalent; that is, if $\{0\}$ and $\g$ are the only convex subgroups of $\g$.   

\begin{proposition}\label{rk1}
If $\g$ is nontrivial, the following conditions are equivalent.
\begin{enumerate}
\item $\g$ is archimedian.
\item $\g$ has rank one.
\item $\g$ is order-isomorphic to a subgroup of $\R$. 
\end{enumerate}
\end{proposition}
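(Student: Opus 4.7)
The strategy is to dispose of $(1) \Leftrightarrow (2)$ and $(3) \Rightarrow (1)$ by short arguments from the definitions, and to reserve the real work for $(1) \Rightarrow (3)$, which is a form of H\"older's embedding theorem.

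The equivalence $(1) \Leftrightarrow (2)$ is essentially tautological. Since $\g$ is nontrivial, $\{0\}$ is a proper convex subgroup and so $\cv(\g)$ contains $\{0\}$. The assertion $\rk(\g) = 1$ means $\cv(\g)$ has exactly one element, which must then be $\{0\}$; equivalently, the only convex subgroups of $\g$ are $\{0\}$ and $\g$. By the definition of $\ha$ as the convex subgroup generated by $a$, this amounts to $\ha = \g$ for every nonzero $a$, which is condition (1). For $(3) \Rightarrow (1)$, given an order-preserving embedding $\iota \colon \g \hk \R$ and nonzero $a, b \in \g$, the archimedean property of $\R$ yields $n, m \in \N$ with $|\iota(a)| \leq n|\iota(b)|$ and $|\iota(b)| \leq m|\iota(a)|$; the strict order-preservation of $\iota$ pulls these back to $|a| \leq n|b|$ and $|b| \leq m|a|$, giving $H(a) = H(b)$.

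The substantive direction is $(1) \Rightarrow (3)$. Fix any $a > 0$ in $\g$ and define $\phi \colon \g \to \R$ by
$$
\phi(b) \,=\, \sup\bigl\{m/n \in \Q \,\big|\, m \in \Z,\ n \in \N,\ ma \leq nb\bigr\}.
$$
The archimedean hypothesis provides, for each $b$, an $N \in \N$ with $-Na \leq b \leq Na$, so the set is nonempty (take $m = -Nn$) and bounded above by $N$, making $\phi(b)$ a well-defined real number. Then $\phi(a) = 1$, $\phi$ is weakly order-preserving directly from its definition, and injectivity follows from archimedean equivalence: for any nonzero $b$, some $m \in \N$ satisfies $a \leq m|b|$, whence $|\phi(b)| \geq 1/m > 0$.

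The main obstacle is additivity, $\phi(b+c) = \phi(b) + \phi(c)$. For each $n \in \N$, let $m(n)$ be the largest integer with $m(n) a \leq n(b+c)$, and similarly let $m_1(n)$, $m_2(n)$ be the largest integers with $m_1(n) a \leq nb$ and $m_2(n) a \leq nc$; these exist and are bounded by the archimedean hypothesis. Then $(m_1(n)+1) a + (m_2(n)+1) a > n(b+c) \geq m(n) a$, forcing $m(n) \leq m_1(n) + m_2(n) + 1$. Dividing by $n$ and letting $n \to \infty$ gives $\phi(b+c) \leq \phi(b) + \phi(c)$, and the reverse inequality follows from combining approximants with a common denominator. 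Once $\phi$ is shown to be an injective, order-preserving group homomorphism, its image realizes $\g$ as a subgroup of $\R$, completing the proof.
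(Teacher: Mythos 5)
Your proposal is correct and follows essentially the same route as the paper: (1)$\Leftrightarrow$(2) and (3)$\Rightarrow$(1) are dispatched from the definitions, and (1)$\Rightarrow$(3) is the H\"older-type embedding sending an element to the real number determined by the initial segment of rationals $m/n$ with $ma\le nb$, exactly the map the paper uses (with the roles of $a$ and $b$ swapped). You merely supply the routine verifications of well-definedness, additivity and injectivity that the paper leaves implicit.
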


\begin{proof}
Conditions (1) and (2) are obviously equivalent. Also, clearly (3) implies (1).

Finally, if $\g$ is archimedian, the choice of any positive $b\in \g$ determines a unique embedding $\g\hk\R$ of ordered groups, such that $b\mapsto 1$. Indeed, any $a\in\g$ is mapped to the real number determined by the initial segment of the rational numbers $m/n$ such that $mb\le na$. Thus, (1) implies (3).
\end{proof}

The \emph{natural valuation} on $\g$ is the mapping 
$$
\val:=\val_\g\colon \g\lra I\infty,\qquad  a\longmapsto \val(a)=H(a).  
$$

This mapping satisfies the following two properties, for all $a,b\in\g$:\e

(VAL0)\quad $\val(a)=\infty \sii a=0$.\e

(VAL1)\quad $\val(a-b)\ge\min\{\val(a),\val(b)\}$, \ and equality holds if $\ \val(a)\ne\val(b)$. \e

We may consider balls in $\g$ with center an element $a\in\g$ and radius $i\in I$:
$$
\as{1.2}
\begin{array}{l}
B_i(a)=\left\{x\in\g\mid \val(x-a)\ge i\right\}=a+H_i,\\
B^o_i(a)=\left\{x\in\g\mid \val(x-a)>i\right\}=a+H^*_i,
\end{array}
$$
where $H_i^*$ is the union of all principal convex subgroups strictly contained in $H_i$.

The natural valuation determines an ultrametric topology on $\g$ by taking the set $\left\{B^o_i(a)\mid i\in I\right\}$ as a fundamental system of open neighbourhoods of $a$.

For all $i\in I$, the subgroup $$H_i^*=B^o_i(0)\subsetneq H_i$$ 
is a convex subgroup (not necessarily principal) which
is the immediate predecessor of $H_i$ in the ordered set $\cc\infty$. 
In particular, the quotient
$$
C_i=C_i(\g)=H_i/H_i^*
$$
 is an ordered group of rank one, which is called the $i$-th \emph{component} of $\g$.

The \emph{skeleton} of $\g$ is the pair $\left(I,(C_i)_{i\in I}\right)$.



\subsection{Relationship between $\cc$ and $I$}



The ordered sets $I$ and $\cc\infty$ are determined one by the other.

\begin{lemma}\label{ConvI}
The set $I$ is the subset of $\cc\infty$ formed by all elements admitting an immediate predecessor.
\end{lemma}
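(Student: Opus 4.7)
My plan is to prove the set equality
$I = \{H \in \cc\infty : H \text{ admits an immediate predecessor in } \cc\infty\}$
by a double inclusion, identifying $I$ with its image $\{H_i : i \in I\}\subset \cc\infty$ under the bijection $i\mapsto H_i$ (and recalling that $\cc\infty$ is ordered by ascending inclusion).

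For the inclusion $\subseteq$, I would simply cite the paragraph immediately preceding the lemma, which already records that for every $i \in I$ the convex subgroup $H_i^*\subsetneq H_i$ is the immediate predecessor of $H_i$ in $\cc\infty$. So any $H_i\in I$ lies on the right-hand side.

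For the reverse inclusion $\supseteq$, suppose $H \in \cc\infty$ admits an immediate predecessor $H' \in \cc\infty$, meaning $H' \subsetneq H$ and no convex subgroup lies strictly between them. First, $H \neq \{0\}$, since $\{0\}$ is the minimum of $\cc\infty$ and therefore has no predecessor at all. To conclude that $H$ is principal, I would apply Lemma~\ref{allH} to write $H = \bigcup_{H_i \subseteq H} H_i$, and argue by contradiction: assume $H$ is not principal, so every $H_i \subseteq H$ occurring in this union is a \emph{strict} inclusion $H_i \subsetneq H$. Since the convex subgroups of $\g$ are totally ordered by inclusion (by the preceding lemma), each such $H_i$ is comparable to $H'$. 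If one had $H' \subsetneq H_i$, then $H' \subsetneq H_i \subsetneq H$ would contradict the immediacy of the predecessor $H'$; hence $H_i \subseteq H'$ for every principal convex subgroup strictly contained in $H$. Taking the union then gives $H \subseteq H'$, contradicting $H' \subsetneq H$. Therefore some $H_i$ in the union must equal $H$, and $H$ is principal and nonzero, i.e., $H \in I$.

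The only delicate point is using the total-ordering property of convex subgroups to eliminate the case $H' \subsetneq H_i$; once that is in place, the union representation from Lemma~\ref{allH} collapses cleanly to force $H \subseteq H'$. I do not expect any serious obstacle, and the main care is in tracking which direction of inclusion corresponds to ``predecessor'' in the chosen ordering of $\cc\infty$.
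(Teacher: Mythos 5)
Your proof is correct. The forward inclusion is the same as the paper's (it just cites the fact that $H_i^*$ is the immediate predecessor of $H_i$). For the converse, however, you take a genuinely different, more roundabout route: you argue by contradiction, decomposing $H=\bigcup_{H_i\subseteq H}H_i$ via Lemma~\ref{allH} and using the total ordering of convex subgroups to force every $H_i\subsetneq H$ below the predecessor $H'$, whence $H\subseteq H'$, a contradiction. The paper instead gives a one-line direct argument: pick any $a\in H\setminus H'$; then $H(a)$ is a convex subgroup with $H'\subsetneq H(a)\subseteq H$ (the first inclusion because $a\in H(a)\setminus H'$ and convex subgroups form a chain, the second by convexity of $H$), so immediacy of $H'$ forces $H(a)=H$, exhibiting $H$ as nonzero principal. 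Both arguments rest on the same two ingredients (the chain property and the relation between $H$ and the principal subgroups it contains); the paper's version buys brevity and an explicit generator, while yours makes the role of Lemma~\ref{allH} explicit at the cost of a contradiction argument. No gaps: your handling of the case $H=\{0\}$ and of the dichotomy $H_i\subseteq H'$ versus $H'\subsetneq H_i$ is accurate.
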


\begin{proof}
Any nonzero principal convex subgroup $H_i$ has an immediate predecessor $H^*_i$.

Conversely, if $H'\subsetneq H$ is an immediate predecessor of a convex subgroup $H$, then $H$ is the principal subgroup generated by any $a\in H\setminus H'$.
\end{proof}

To any cut $(S,S^c)$ in $I$ we may associate the convex subgroup
$$
H_S=\bigcup\nolimits_{i\in I,\,i\not\in S}H_i.
$$

\begin{lemma}\label{IConv}
The assignment $(S,S^c)\mapsto H_S$ yields an isomorphism of ordered sets:
$$
\cuts(I)\opp\lra \cc\infty.
$$

The  inverse isomorphism maps any $H\in\cc\infty$ to the cut $\left(\val\left(\g\setminus H\right),\val\left(H\right)\right)$.
\end{lemma}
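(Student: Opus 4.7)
The plan is to verify that $\Phi\colon(S,S^c)\mapsto H_S$ and $\Psi\colon H\mapsto(\val(\g\setminus H),\val(H))$ are well-defined, order-reversing, and mutually inverse. The only substantive inputs needed are that convex subgroups of $\g$ are totally ordered by inclusion together with Lemma \ref{allH}. First, $H_S=\bigcup_{i\in S^c}H_i$ is a convex subgroup since $\{H_i:i\in S^c\}$ is a chain of convex subgroups, and any union of a chain of convex subgroups is again a convex subgroup. The map $\Phi$ reverses order: $S\subseteq T$ gives $T^c\subseteq S^c$ and hence $H_T\subseteq H_S$; equivalently, $\Phi\colon\cuts(I)\opp\to\cc\infty$ is order-preserving.

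To make $\Psi$ transparent, I would first verify the two reformulations
\[
\val(H)=\{i\in I:H_i\subseteq H\},\qquad\val(\g\setminus H)=\{i\in I:H\subsetneq H_i\},
\]
each of which reduces to the observation that a generator $a$ of $H_i$ lies in $H$ if and only if $H_i\subseteq H$, because $H$ is convex. Disjointness and exhaustiveness of these two subsets of $I$ are immediate, since $H$ and $H_i$ are comparable for every $i$. Moreover, $\val(\g\setminus H)$ is an initial segment: $j<i$ in $I$ means $H_j\supsetneq H_i$, so the strict containment $H_i\supsetneq H$ propagates to $H_j\supsetneq H$. Hence $\Psi(H)$ is a cut in $I$. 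The first reformulation also yields that $\Psi$ is order-reversing: $H\subseteq H'$ implies $\val(H)\subseteq\val(H')$, whence $\val(\g\setminus H')\subseteq\val(\g\setminus H)$.

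It remains to verify the two compositions. For $\Phi\circ\Psi$, the subgroup $\Phi(\Psi(H))$ is the union of all $H_i\subseteq H$, which coincides with $H$ by Lemma \ref{allH}. For $\Psi\circ\Phi$, it suffices to show $\val(H_S)=S^c$; the inclusion $S^c\subseteq\val(H_S)$ is clear from the definition of $H_S$. The reverse inclusion is the only delicate step: if a generator $a$ of $H_i$ lies in $H_S=\bigcup_{j\in S^c}H_j$, then $a\in H_j$ for some $j\in S^c$, so $H_i=H(a)\subseteq H_j$, which under the descending-inclusion ordering on $I$ translates into $j\le i$; since $S^c$ is a final segment, this forces $i\in S^c$. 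The principal risk throughout is keeping the three opposite orderings straight---descending inclusion on $I$, ascending inclusion on $\cc\infty$, and inclusion on $\cuts(I)$---so I would state each translation between them explicitly before using it.
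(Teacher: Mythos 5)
Your proof is correct and follows essentially the same route as the paper: the key inputs are that the $H_i$ form a chain and that $H=H_{\val(\g\setminus H)}$ by Lemma \ref{allH}, which is exactly how the paper obtains surjectivity. The only cosmetic difference is that you get injectivity by verifying the inverse map explicitly, whereas the paper reads it off from the strict containments $H_T\subset H_i^*\subsetneq H_i\subset H_S$ for $i\in T\setminus S$; both are fine.
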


\begin{proof}
If $S\subsetneq T$ are initial segments of $I$, then $H_T\subset H^*_i\subsetneq H_i\subset H_S$, for all
$i\in T\setminus S$. Thus, the mapping $(S,S^c)\mapsto H_S$ is an embedding of ordered sets. 
Also, it is an onto map because $H=H_{\val\left(\g\setminus H\right)}$ by Lemma \ref{allH}.
\end{proof}

\begin{corollary}\label{quotient}
For $H\in\cc$, let $S=\val\left(\g\setminus H\right)$ be the initial segment of $I$ such that $H=H_S$. Then, the assignment $H'\mapsto H'/H$ induces isomorphisms of ordered sets:
$$
\cc_{\ge H}\simeq \cv(\g/H),\qquad \val\left(\g\setminus H\right)\simeq \pcv(\g/H).
$$
\end{corollary}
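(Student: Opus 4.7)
My plan is to establish the two isomorphisms separately, in each case reducing to facts already recorded in the excerpt.

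The first isomorphism $\cc_{\ge H}\simeq \cv(\g/H)$ is essentially the standard correspondence theorem, which has already been invoked in the section: the assignment $H'\mapsto H'/H$ gives an inclusion-preserving bijection between convex subgroups of $\g$ containing $H$ and convex subgroups of $\g/H$. It restricts to proper subgroups on both sides because $H'/H=\g/H$ iff $H'=\g$. I would dispatch this in a couple of lines.

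For the second isomorphism $S\simeq\pcv(\g/H)$, the plan is to combine the first isomorphism with Lemma \ref{ConvI} applied to $\g/H$: the latter identifies $\pcv(\g/H)$ with the set of nonzero convex subgroups of $\g/H$ admitting an immediate predecessor in $\cv(\g/H)\infty$. Via $H'\mapsto H'/H$, this pulls back to convex subgroups $H'\supsetneq H$ of $\g$ admitting an immediate predecessor in $\cc_{\ge H}\infty$. The key observation, which I expect to be the only subtle point, is that for such $H'$ the notions \emph{immediate predecessor in $\cc_{\ge H}\infty$} and \emph{immediate predecessor in $\cc\infty$} coincide. Indeed, by the total ordering of convex subgroups, any convex $L\subsetneq H'$ that fails to contain $H$ must satisfy $L\subsetneq H$; so $H$ already dominates every such $L$, and the immediate predecessor of $H'$ (when it exists) is the same whether we compute it in $\cc_{\ge H}\infty$ or in $\cc\infty$.

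Applying Lemma \ref{ConvI} to $\g$ itself then characterises the relevant $H'$ as the principal subgroups $H_i$ with $H_i\supsetneq H$. Using $H=H_S=\bigcup_{j\notin S}H_j$ and total ordering, the condition $H_i\supsetneq H$ is equivalent to $H_i\not\subset H$, i.e., $i\in S$. Order preservation is immediate: $i<j$ in $S\subset I$ means $H_i\supsetneq H_j$, hence $H_i/H\supsetneq H_j/H$, matching the descending-inclusion order on $\pcv(\g/H)$.
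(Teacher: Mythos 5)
Your argument is correct and is exactly the natural way to fill in this corollary, which the paper states without proof: the first isomorphism is the correspondence theorem for convex subgroups already recorded in the section, and the second follows by transporting Lemma \ref{ConvI} through it, the only real content being your (correct) observation that for $H'\supsetneq H$ an immediate predecessor in $\cc_{\ge H}\infty$ is the same as one in $\cc\infty$, since every convex $L\subsetneq H'$ not containing $H$ lies strictly below $H$. Nothing is missing.
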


\subsection{Immediate extensions}\label{subsecImmed}

\begin{lemma}\label{rkEmbed}
Every extension $\g\hk\La$ of ordered groups induces natural embeddings of ordered sets:
$$
\cv(\g)\hra\cv(\La),\quad H\mapsto \;\overline{H}\mbox{ \ convex subgroup of $\La$  generated by }H.
$$
$$
\pcv(\g)\stackrel{\iota}\hra\pcv(\La),\qquad H_i=\val_\g(a)\mapsto H_{\iota(i)}=\overline{H_i}=\val_\La(\iota(a)).
$$  
Moreover, the embedding $H_i\hk \overline{H_i}$ induces an embedding $C_i(\g)\hk C_{\iota(i)}(\La)$. 
\end{lemma}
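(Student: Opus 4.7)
My plan is to verify the three claims in sequence using an explicit description of the convex hull. For any convex subgroup $H$ of $\g$, the convex subgroup of $\La$ generated by $H$ is
$$
\overline{H}=\left\{y\in\La\mid |y|\le |h|\text{ for some } h\in H\right\},
$$
which is a subgroup by the triangle inequality $|y_1-y_2|\le|y_1|+|y_2|\le|h_1|+|h_2|$ (with $|h_1|+|h_2|\in H$), and is convex by transitivity of $\le$. The assignment $H\mapsto\overline{H}$ is visibly monotone; for strictness, if $H\subsetneq H'$ and $a\in H'\setminus H$, then convexity of $H$ in $\g$ forces $|h|<|a|$ for every $h\in H$, and this inequality passes to $\La$, showing $\iota(a)\in\overline{H'}\setminus\overline{H}$. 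The same reasoning applied to an arbitrary $a\in \g\setminus H$ shows $\overline{H}\ne \La$ when $H$ is proper in $\g$, so the first map genuinely lands in $\cv(\La)$.

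For the second embedding, I would verify the identity $\overline{H(a)}=H(\iota(a))$. Combining $H(a)=\{x\in\g:|x|\le n|a|\text{ for some }n\in\N\}$ with the description of $\overline{H(a)}$ above yields at once that $y\in\overline{H(a)}$ iff $|y|\le n|\iota(a)|$ for some $n$. This shows that principal convex subgroups map to principal convex subgroups with the claimed formula; the induced map on $\pcv$ remains a strict order embedding because reversing inclusion on both source and target does not affect strictness.

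For the last assertion, I would establish that $\iota$ sends $H_i^*$ into $H_{\iota(i)}^*$ and, conversely, that every $x\in H_i$ with $\iota(x)\in H_{\iota(i)}^*$ already lies in $H_i^*$. Both are instances of the equivalence
$$
H(x)\subsetneq H_i \iff H(\iota(x))=\overline{H(x)}\subsetneq\overline{H_i}=H_{\iota(i)},
$$
furnished by the embedding proved in the first paragraph (applied to the principal subgroups $H(x)\subset H_i$). This yields a well-defined, injective group homomorphism $C_i(\g)\to C_{\iota(i)}(\La)$, and order preservation is inherited from $\iota$ via the representatives. The only genuine technical point throughout is the strictness of inclusions under $H\mapsto\overline{H}$, which is the single convexity argument of paragraph one; everything else follows formally.
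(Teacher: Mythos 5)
The paper states Lemma \ref{rkEmbed} without proof, so there is no argument of the authors' to compare against; your proposal supplies a correct and complete proof along the standard lines one would expect. The explicit description $\overline{H}=\{y\in\La\mid |y|\le|h| \text{ for some } h\in H\}$ is the right tool: it gives strictness of $H\mapsto\overline{H}$ (hence properness of $\overline{H}$ when $H$ is proper), the identity $\overline{H(a)}=H(\iota(a))$, and the two containments $\iota(H_i^*)\subset H_{\iota(i)}^*$ and $\iota^{-1}(H_{\iota(i)}^*)\cap H_i=H_i^*$ needed for the component map to be well defined and injective; order preservation of $C_i(\g)\hk C_{\iota(i)}(\La)$ then follows from the latter containment applied to differences of representatives, as you indicate. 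No gaps.
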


\noindent{\bf Definition. }The extension $\g\hk\La$ is \emph{immediate} if it preserves the skeleton. That is, it induces an isomorphism $\pcv(\g)\simeq \pcv(\La)$ of ordered sets, and isomorphisms $C_i(\g)\simeq C_{\iota(i)}(\La)$ between all the components.\e


It is possible to construct ordered groups with a prescribed skeleton.

Let $I$ be an arbitrary ordered set and $(C_i)_{i\in I}$ a family of ordered groups of rank one, parametrized by $I$. 
Their \emph{Hahn sum} is the direct sum  $\bigoplus\nolimits_{i\in I}C_i$  equipped with the lexicographical order.

For any element $x=(x_i)_{i\in I}\in\prod_{i\in I}C_i$, the \emph{support} of $x$ is the subset 
$$\op{supp}(x)=\left\{i\in I\mid x_i\ne0\right\}\subset I.$$

The \emph{Hahn product} is  defined as the subgroup $$\lower.5ex\hbox{\mbox{\Large$\mathbf{H}$}}{\lower1.3ex\hbox{\mbox{\tiny${i\!\!\in\!\! I}$}}}\,C_i\ \subset\ \prod\nolimits_{i\in I}C_i$$  formed by all elements whose support is a well-ordered subset of $I$, with respect to the ordering induced by that of $I$.
It is easy to check that it makes sense to consider the lexicographical ordering on this subgroup.

The Hahn product is an immediate extension of the Hahn sum because both ordered groups have skeleton $\left(I,(C_i)_{i\in I}\right)$.
More precisely, in both cases the principal subgroups are parametrized by $I$ via: 
$$
H_i=\{x=(x_j)_{j\in I}\mid x_j=0\mbox{ for all }j<i\},\qquad i\in I.
$$
Moreover, the projection 
$
H_i\to C_i$, sending $x\mapsto x_i$,
induces an isomorphism of ordered groups between $H_i/H_i^*$ and $C_i$.

The convex subgroup $H_S$ determined by an initial segment $S\in \inii$ is:
$$
H_S=\{(x_j)_{j\in I}\mid x_j=0\mbox{ for all }j\in S\}.
$$

If $C_i=C$ for all $i\in I$, then we use the notation
$$
C^{(I)}\subset\, C^I_{\lx}\subset\, C^I,
$$
for the Hahn sum, Hahn product, and cartesian product, respectively.


The following result is known as \emph{Hahn's embedding theorem} \cite[Sec. A]{Rib}.

\begin{theorem}\label{hahn}
 Every divisible ordered group $\g$ admits an immediate embedding in the Hahn product determined by the skeleton of $\g$.
\end{theorem}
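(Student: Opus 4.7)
The plan is to build an order-preserving group embedding $\phi\colon\g\hookrightarrow \mathbf{H}_{i\in I}C_i$ by expanding each element of $\g$ as a transfinite ``Hahn series'' in its archimedean components; since the Hahn product has skeleton $(I,(C_i)_{i\in I})$ by construction, any such embedding is automatically immediate.

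The setup uses divisibility crucially. Because $\g$ is divisible and convex subgroups of a divisible ordered group are divisible (if $nb=a\in H$ then $|b|\le|a|$, so $b\in H$ by convexity), each $H_i$ and $H_i^*$ is a $\Q$-vector space; hence so is $C_i=H_i/H_i^*$, which by Proposition \ref{rk1} is an ordered subgroup of $\R$. The short exact sequence $0\to H_i^*\to H_i\to C_i\to 0$ splits as a sequence of $\Q$-vector spaces, so I fix once and for all a $\Q$-linear section $s_i\colon C_i\hookrightarrow H_i$ of the projection, for every $i\in I$.

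The map $\phi$ is defined by transfinite recursion. For $a\in\g$ set $a_0=a$, and at a successor stage, provided $a_\beta\ne 0$, let $i_\beta=\val(a_\beta)$, let $x_{i_\beta}\in C_{i_\beta}$ be the image of $a_\beta$ modulo $H_{i_\beta}^*$, and set $a_{\beta+1}=a_\beta-s_{i_\beta}(x_{i_\beta})$, which has valuation strictly greater than $i_\beta$. When the process terminates at some $a_\lambda=0$, define $\phi(a)$ to be the tuple supported on $\{i_\beta\}_{\beta<\lambda}$ with $\phi(a)_{i_\beta}=x_{i_\beta}$ and zero elsewhere. The support is well-ordered by construction, so $\phi(a)\in\mathbf{H}_{i\in I}C_i$. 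Additivity of $\phi$ then follows from the $\Q$-linearity of the $s_i$, order-preservation holds because the leading coefficient of $\phi(a)$ shares the sign of $a$ inside $C_{i_0}\subset\R$, and injectivity is immediate from $\val(a)=i_0$ matching the valuation of $\phi(a)$ in the lexicographic order.

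The main obstacle is the limit step: at a limit ordinal $\lambda$ one needs a remainder $a_\lambda\in\g$ absorbing the pseudo-Cauchy tail of $(a_\beta)_{\beta<\lambda}$, but $\g$ need not be spherically complete and no such $a_\lambda$ may exist in $\g$. The standard remedy is a Zorn's-lemma argument on the poset of pairs $(\d,\psi)$, where $\d\subseteq\g$ is a divisible convex subgroup and $\psi\colon\d\hookrightarrow \mathbf{H}_{i\in I}C_i$ is a partial embedding inducing the identity on skeletons, ordered by extension. Chains have upper bounds via union, and for a maximal element one shows $\d=\g$ by adjoining any $a\in\g\setminus\d$ together with the tuple produced by the recursion above, now computed directly inside the Hahn product (which is itself spherically complete). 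This supplies the needed limits on the target side even when they are missing from $\g$, so the maximal partial embedding must be all of $\g$.
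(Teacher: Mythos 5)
Your overall strategy---expand each element greedily into its archimedean components and repair the failure at limit stages by a Zorn's-lemma argument exploiting the spherical completeness of the Hahn product---is a classical alternative route to Hahn's theorem and is genuinely different from the one sketched in the paper. The paper invokes Banaschewski's lemma to choose complements $\g=H\oplus H\cmpl$ \emph{coherently} for all convex subgroups $H$ at once; the resulting projections $\g\twoheadrightarrow H_i\twoheadrightarrow C_i$, $a\mapsto a_i$, are $\Q$-linear by construction, so additivity of $\varphi(a)=(a_i)_{i\in I}$ is automatic and no transfinite recursion on individual elements is needed (only the verifications that $\varphi(\g)\subset\hg$ and that $\varphi$ preserves the order remain, and these are deferred to the reference). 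Your isolated sections $s_i\colon C_i\to H_i$ are precisely the ``uncoordinated'' version of these complements, and that is where your write-up runs into trouble.

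Two steps have genuine gaps. First, the Zorn poset as you set it up does not support the extension step: you require $\d$ to be a divisible \emph{convex} subgroup, but adjoining a single element $a\in\g\setminus\d$ produces $\d+\Q a$, which is essentially never convex, so the extended pair leaves your poset and maximality yields no contradiction. The standard repair is to drop convexity: take $\d$ to be an arbitrary $\Q$-subspace and $\psi\colon\d\to\hg$ a $\Q$-linear map such that, for every nonzero $d\in\d$, the support of $\psi(d)$ is well ordered with minimum $\val(d)$ and the leading coefficient of $\psi(d)$ equals $d\bmod H^*_{\val(d)}$. Such a $\psi$ is automatically an order embedding, chains still have unions, and the one-element extension reduces to finding $y\in\hg$ with $\val(y-\psi(d))=\val(a-d)$ and matching leading coefficient for \emph{all} $d\in\d$ simultaneously---a nested family of balls in $\hg$, nonempty by spherical completeness. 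Note that the compatibility must be arranged against all of $\psi(\d)$, not merely against the partial sums of your recursion for $a$. Second, the assertion that additivity of $\phi$ ``follows from the $\Q$-linearity of the $s_i$'' is not justified for the recursion as defined: the set of indices visited for $a+b$ is not the union of those for $a$ and $b$ (leading terms can cancel), and at limit stages the partial sums $\sum_{\beta}s_{i_\beta}(x_{i_\beta})$ do not exist in $\g$, so there is no closed formula for $\phi(a)_i$ from which linearity could be read off. In the corrected Zorn formulation additivity holds by fiat, since $\psi$ is required to be $\Q$-linear on each $\d$; the elementwise recursion should be demoted to motivation rather than carrying the construction.
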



Let us quickly review the proof of Hahn's theorem based on the following result of Banaschewski, which makes use of Zorn's lemma.  \e

\noindent{\bf Banaschewski's lemma. }{\it Let $V$ be a vector space over a field $K$. Consider a non-empty set $ \ss\subset\op{Subsp}(V)$ of subspaces of $V$.
 Then, there exists a mapping
$$
(\ \,)\cmpl\colon \ss\lra\ \op{Subsp}(V)
$$
satisfying the following properties:
\begin{enumerate}
\item[(i)] $\ V=W\oplus W\cmpl$, \ for all $W\in\ss$.
\item[(ii)] $\ U\subset W\;\imp\,U\cmpl\supset W\cmpl$, \ for all $\,U,W\in\ss$. 
\end{enumerate}}\e

Now, let $\g$ be a divisible ordered abelian group 
with skeleton $\left(I,(C_i)_{i\in I}\right)$. The group $\g$ and its components $C_i$ have a natural structure of $\Q$-vector spaces; hence, the Hahn product
is a $\Q$-vector space too.  Hahn's embedding
$$
\varphi\colon \g\hra\hg:=\lower.5ex\hbox{\mbox{\Large$\mathbf{H}$}}{\lower1.3ex\hbox{\mbox{\tiny${i\!\!\in\!\! I}$}}}\,C_i
$$
is necessarily a $\Q$-linear mapping.

By Banaschewski's lemma, we may choose complementary $\Q$-subspaces:
$$
\g=H\oplus H\cmpl,\quad\mbox{for all }H\in \cv(\g), 
$$
with the coherent behaviour with respect to inclusions indicated in condition (ii). 

For all $i\in I$ we have $\Q$-linear projections:
$$
\g\longtwoheadrightarrow H_i\longtwoheadrightarrow C_i,\qquad a\longmapsto a_i. 
$$
The projection $\g\twoheadrightarrow H_i$ depends on the choice of the subspace $H_i\cmpl$. The projection $H_i\twoheadrightarrow C_i=H_i/H_i^*$ is the canonical quotient mapping.

In this way, we obtain an injective group homomorphism:
$$
\varphi\colon\g\lra \prod_{i\in I}C_i,\qquad a\ \longmapsto\ \left(a_i\right)_{i\in I}.
$$
Indeed, if $a\in\g$ is non-zero, the principal subgroup $H(a)$ generated by $a$ is non-zero too; thus,  $H(a)=H_i$ for some $i\in I$. The element $a_i\in C_i$ is the class of $a$ modulo $H_i^*$.
Since $a$ generates $H_i$, we have  $a_i\ne0$.

The proof of Hahn's theorem ends by checking that $\varphi(\g)\subset\hg$ and $\varphi$ preserves the ordering \cite[Sec. A]{Rib}.\e

Let us recall some nice properties of this embedding $\varphi$. 
For all $S\in\inii$, consider the well-known process of \emph{truncation by} $S$:
$$
\hg\lra\hg,\qquad x=(x_i)_{i\in I}\longmapsto x_S=(y_i)_{i\in I},
$$
where $y_i=x_i$ for all $i\in S$, and $y_i=0$ otherwise. 

\begin{lemma}\label{hahnSpecial}
The image  $\varphi(\g)$ of Hahn's embedding contains the truncations $\varphi(a)_S$, for all $a\in\g$, $S\in\inii$. In particular, $\varphi(\g)$ contains the Hahn sum $\bigoplus_{i\in I}C_i$.
\end{lemma}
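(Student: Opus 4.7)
The plan is to realize every truncation as the image under $\varphi$ of one summand in a Banaschewski decomposition of $a$. Given $a\in\g$ and $S\in\inii$, I would apply Banaschewski's lemma to the single convex subgroup $H_S$ to write $a=h+h^*$ with $h\in H_S$ and $h^*\in H_S\cmpl$, and then claim $\varphi(h^*)=\varphi(a)_S$. Since $\varphi$ is $\Q$-linear and every coordinate is computed via the projections $\g\twoheadrightarrow H_i\twoheadrightarrow C_i$, this claim reduces to checking the $i$-th coordinate separately in the two cases $i\in S$ and $i\notin S$.

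For $i\notin S$, the element $i$ lies in the final segment $S^c$ and so $H_i\subset H_S$ by the very definition of $H_S$. Banaschewski's property (ii) then yields $H_S\cmpl\subset H_i\cmpl$, so $h^*\in H_i\cmpl$ and its projection to $H_i$ vanishes, forcing $(h^*)_i=0$. For $i\in S$, the initiality of $S$ forces every $j\in S^c$ to satisfy $j>i$, hence $H_j\subset H_i^*$; taking the union yields $H_S\subset H_i^*$, so $h\in H_i^*$, which means its class $h_i\in C_i=H_i/H_i^*$ is zero. Combining with $\varphi(a)=\varphi(h)+\varphi(h^*)$, the $i$-th coordinate satisfies $a_i=(h^*)_i$. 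Together these two computations give $\varphi(h^*)=\varphi(a)_S$, placing every truncation in $\varphi(\g)$.

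For the final assertion, I would note that any element of $\bigoplus_{i\in I}C_i$ has finite support, hence is a finite sum of elements supported at a single index. An element supported at $i\in I$ with value $c\in C_i$ can be exhibited as the difference $\varphi(a)_{I_{\le i}}-\varphi(a)_{I_{<i}}$ for any $a\in H_i$ whose class modulo $H_i^*$ is $c$. Since both $I_{\le i}$ and $I_{<i}$ are initial segments of $I$, both truncations belong to $\varphi(\g)$ by the first part, and so does their difference; summing finitely many such differences realizes any Hahn-sum element inside $\varphi(\g)$.

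The principal obstacle is simply keeping the dual ordering conventions straight: $I$ is ordered by descending inclusion of principal subgroups while $\cc\infty$ carries the opposite order via $(S,S^c)\mapsto H_S$ (Lemma \ref{IConv}). The two crucial inclusions $H_i\subset H_S$ for $i\notin S$ and $H_S\subset H_i^*$ for $i\in S$ are precisely what this bookkeeping supplies, and once they are in place Banaschewski's property (ii) and the definition of $\varphi$ finish the argument with no additional input.
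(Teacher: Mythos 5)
Your argument is correct and follows the paper's own proof essentially verbatim: you decompose $a=h+h^*$ with respect to the Banaschewski complement of $H_S$ and check the two coordinate cases via the inclusions $H_S\subset H_i^*$ for $i\in S$ and $H_S\cmpl\subset H_i\cmpl$ for $i\notin S$. The only (harmless) variation is in the Hahn-sum step, where you realize $c\,e_i$ as the difference $\varphi(a)_{I_{\le i}}-\varphi(a)_{I_{<i}}$ instead of observing directly, as the paper does, that $a_j=0$ for all $j<i$ when $a\in H_i$, so that $\varphi(a)_{I_{\le i}}$ alone already does the job.
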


\begin{proof}

Take $a\in\g$, $S\in\inii$. We can write in a unique form:
$$
a=c+b,\quad c\in H_S,\quad b\in H_S\cmpl.
$$
Let us show that $\varphi(a)_S=\varphi(b)\in\varphi(\g)$. Indeed, for $i\in S$ we have $H_S\subset H_i^*\subsetneq H_i$. Thus, the decompositions of $a$ and $b$ over $H_i$ are:
$$
b=u+v,\qquad a=(c+u)+v,\qquad u,c\in H_i,\ v\in H_i\cmpl.
$$
Since $c\in H_S\subset H_i^*$, we have $a_i=(c+u)\md{H_i^*}=u\md{H_i^*}=b_i$.

For $i\not\in S$ we have $H_i\subset H_S$, so that $b\in H_S\cmpl\subset H_i\cmpl$. Thus, $b_i=0$. This proves that $\varphi(\g)$ contains all truncations.

Take now $i\in I$ and $q\in C_i$. Choose any $a\in H_i$ such that $q=a\md{H_i^*}$. 
For all $j<i$, we have $H_j\supsetneq H_i$. Thus, $H_j^*\supseteq H_i$ and $a_j=0$. Hence, for $S=I_{\le i}$, the projection $\varphi(a)_S$ has $i$-th coordinate $q$, and all other Hahn coordinates vanish. Since $\varphi(a)_S\in\varphi(\g)$, this proves  that $\varphi(\g)$ contains the Hahn sum.
\end{proof}

\section{Cuts in ordered abelian groups}                    \label{sectcuts}
In this section we study basic properties of cuts in an ordered abelian group $\g$. Most of the material has been taken from \cite{KuhlCuts}.

We keep with the notation $\cc=\cv(\g)$, $\,I=\pcv(\g)$,\, used in the last section.

Any cut $(D,E)$ in $\g$ admits the operations of \emph{shifting} by an element $a\in\g$ and \emph{multiplication} by $-1$. They are defined in the obvious way: 
$$
(D,E)+a=(D+a,E+a),\qquad -(D,E)=(-E,-D).
$$
These operations are defined for the improper cuts too, if we agree that 
$$-\emptyset=\emptyset,\qquad \emptyset+a=\emptyset \  \mbox{ for all } \ a\in \g.
$$

\subsection{Invariance group of a cut}

To every subset $D\subset \g$ we may associate the following invariance subgroup:
$$
H(D)=\left\{h\in\g\mid D+h=D\right\}\subset\g.
$$
Let $E=\g\setminus D$ be the complementary subset of $D$.
For all $h\in\g$, we clearly have $\g\setminus(D+h)=E+h$; therefore, $$D+h=D \quad\sii\quad E+h=E.$$

In particular, to every cut $(D,E)$ in $\g$, we may associate its \emph{invariance group}: 
$$
H(D,E):=H(D)=H(E).
$$

Now, since $D$ is an initial segment of $\g$, the subgroup $H(D)\subset\g$ is convex. Indeed, take a positive $h\in H(D)$ and an element $a\in\g$ such that $0<a<h$.  We have necessarily $a\in H(D)$, because
$$
D=D-h\subset D+a\subset D+h=D.
$$

\begin{lemma}\label{propertiesHD}
Let $(D,E)$ be a cut in $\g$, and let  $H\subset\g$ be a convex subgroup.
\begin{enumerate}
\item[(i)\;] \ $E-D=\left(\g_{>0}\right)\setminus H(D)$.
\item[(ii)\,] \ We have $H\subset H(D)$ if and only if $(D/H,E/H)$  is a cut in $\g/H$. In this case, the invariance group of this cut is $H(D)/H$.
\item[(iii)] \ $H(-D)=H(D)$ and $H(D+a)=H(D)$ for all $a\in\g$.
\end{enumerate}
\end{lemma}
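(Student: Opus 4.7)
The three claims all reduce to direct manipulations of shifts of initial segments, so I would handle them in order.

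For (i), the plan is to verify both inclusions. Given $e\in E$ and $d\in D$, the difference $e-d$ is positive because $D<E$, and it cannot lie in $H(D)$: otherwise $D+(e-d)=D$ would force $e=d+(e-d)\in D$, a contradiction. For the reverse inclusion, given $h\in \g_{>0}\setminus H(D)$, I would first observe that shifting the initial segment $D$ by a positive $h$ forces $D\subseteq D+h$, since any $d\in D$ equals $(d-h)+h$ with $d-h<d$, hence $d-h\in D$. Because $h\notin H(D)$, this inclusion must be strict, so some $d\in D$ satisfies $d+h\in E$; setting $e:=d+h$ exhibits $h=e-d\in E-D$.

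For (ii), I would separate the two directions. If $H\subseteq H(D)$, the identities $D+H=D$ and $E+H=E$ say that $D$ and $E$ are unions of $H$-cosets; their images $D/H$ and $E/H$ are then disjoint, cover $\g/H$, and inherit $D/H<E/H$, which is the definition of a cut. Conversely, if $(D/H,E/H)$ is a cut in $\g/H$, then no $H$-coset meets both $D$ and $E$, hence $D+H\subseteq D$ and $H\subseteq H(D)$. For the invariance group, I would check that $(D/H)+(h+H)=D/H$ is equivalent to $D+h+H=D+H$, and, using $D=D+H$, this collapses to $D+h=D$; so the invariance group equals $H(D)/H$.

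Part (iii) is immediate from the group structure: $H(-D)=\{h:-D+h=-D\}=-\{h:D+h=D\}=-H(D)=H(D)$ because $H(D)$ is a subgroup, and $H(D+a)=\{h:D+a+h=D+a\}=H(D)$ by cancellation in $\g$. The only step that merits a moment of care is the reverse inclusion in (i): it is precisely the positivity of $h$ combined with the initial-segment property of $D$ that yields $D\subseteq D+h$, so that $h\notin H(D)$ produces an actual witness $d\in D$ with $d+h\in E$.
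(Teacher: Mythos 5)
Your proof is correct and follows essentially the same route as the paper's: for (i) the paper also uses that positivity of $h$ gives $D\subseteq D+h$, so $h\notin H(D)$ is equivalent to $(D+h)\cap E\neq\emptyset$, i.e.\ to $h\in E-D$; and for (ii) the paper likewise reduces everything to the equivalence $(D/H)\cap(E/H)=\emptyset\iff D+H=D\iff H\subset H(D)$. You have simply written out in full the steps the paper leaves implicit, and your treatment of (iii) matches the paper's (which dismisses it as obvious).
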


\begin{proof}
Item (iii) is obvious. Let us prove (i). Suppose that $h$ is positive. Then,
$$h\not\in H(D)\sii (D+h)\cap E\ne \emptyset\sii h\in E-D.$$
Item (ii) follows from: $(D/H)\cap(E/H)=\emptyset\sii  D+H=D\sii H\subset H(D)$.
\end{proof}

Let $S=\{i\in I\mid H_i\supsetneq H(D)\}$ be the initial segment of $I$ canonically associated to $H(D)$ in Lemma \ref{IConv}. Let $H'\in\cci$ be the unique possible candidate to be an immediate successor of $H(D)$ in $\cci$. That is,  
\begin{equation}\label{Hprima}
\bigcup_{i\not\in S}H_i=H(D)\subset H':=\bigcap_{i\in S}H_i.
\end{equation}


\begin{lemma}\label{convGap}
For any cut $(D,E)$ in $\g$, the following conditions are equivalent.
\begin{enumerate}
\item[(a)] \ The principal cut $H(D)^+$ in $\cc\infty$ is a gap.
\item[(b)] \ There exists a maximal element in $S$.
\item[(c)] \ $H(D)\subsetneq H'$.
\end{enumerate}
In this case, $H(D)$ is the immediate predecessor of the principal convex subgroup $H'=H_{i_{\max}}$, where $i_{\max}=\max(S)$.
\end{lemma}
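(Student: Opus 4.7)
The key observation is that $H(D)^+$ already has $H(D)$ as the maximum of its left part, so this cut is a gap if and only if its right part has a minimum as well, i.e., if and only if $H(D)$ admits an immediate successor in $\cc\infty$. I would begin by noting that $S=\val(\g\setminus H(D))$: for any $a\in\g\setminus H(D)$, total ordering of convex subgroups gives $H(a)\supsetneq H(D)$, so $\val(a)\in S$; conversely, every $i\in S$ is realized as $\val(a)$ for any generator $a$ of $H_i$ (such $a$ exists by principality, and $a\notin H(D)$ since $H(a)=H_i\supsetneq H(D)$). Consequently, $H(D)=H_S$ by Lemma \ref{IConv}.

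For (a) $\Leftrightarrow$ (b), I would invoke the order-reversing isomorphism $\cuts(I)\opp\simeq\cc\infty$ of Lemma \ref{IConv}, which sends $(S,S^c)\mapsto H_S=H(D)$. An immediate successor of $H(D)$ in $\cc\infty$ corresponds, via this reversing isomorphism, to an immediate predecessor of $(S,S^c)$ in $\cuts(I)$; and via the isomorphism $\inii\simeq\cuts(I)$ this exists precisely when $S$ has an immediate predecessor in $\inii$, i.e., when $S$ itself has a maximum (the predecessor being $S\setminus\{\max S\}$).

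The equivalence (b) $\Leftrightarrow$ (c), together with the identification $H'=H_{i_{\max}}$ and the claim that $H(D)$ is its immediate predecessor, I would prove by direct calculation. If $i_{\max}=\max(S)$, then $i\le i_{\max}$ in $I$ means $H_i\supseteq H_{i_{\max}}$ for every $i\in S$, so $H'=H_{i_{\max}}$, which strictly contains $H(D)$ because $i_{\max}\in S$. Conversely, if $H(D)\subsetneq H'$, pick $a\in H'\setminus H(D)$ and write $H(a)=H_j$; then $H_j\supsetneq H(D)$ by the total-ordering argument, so $j\in S$, and convexity of $H'$ gives $H_j\subseteq H'$. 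For any $k\in S$, $H_k\supseteq H'\supseteq H_j$, forcing $k\le j$ in $I$; hence $j=\max(S)$ and $H'=H_j=H_{i_{\max}}$. To finish the immediate-predecessor claim, take any convex $H''$ with $H(D)\subsetneq H''\subseteq H_{i_{\max}}$, pick $b\in H''\setminus H(D)$, and write $H(b)=H_k$; then $k\in S$ yields $k\le i_{\max}$, so $H_k\supseteq H_{i_{\max}}$, while convexity of $H''$ gives $H_k\subseteq H''\subseteq H_{i_{\max}}$, forcing $H''=H_{i_{\max}}$. The main subtlety throughout is keeping the two dual order conventions straight---$\cc\infty$ is ordered by ascending inclusion while $I$ by descending inclusion---so that a maximum of $S$ in $I$ corresponds to the \emph{smallest} principal subgroup among $\{H_i\mid i\in S\}$, which is exactly what collapses $\bigcap_{i\in S}H_i$ to a single $H_{i_{\max}}$.
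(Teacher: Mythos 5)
Your proof is correct and follows essentially the same route as the paper: both rest on the correspondence between convex subgroups and initial segments of $I$ (Lemmas \ref{ConvI} and \ref{IConv}) together with the total ordering of convex subgroups, and your direct verifications that $H'=H_{i_{\max}}$ and that nothing lies strictly between $H(D)$ and $H_{i_{\max}}$ match the paper's computation $H(D)=H_{i_{\max}}^*$. The only cosmetic difference is organizational: the paper proves the cycle (a)$\Rightarrow$(b)$\Rightarrow$(c)$\Rightarrow$(a), using Lemma \ref{ConvI} to see that an immediate successor of $H(D)$ must be principal, whereas you split into the two equivalences (a)$\Leftrightarrow$(b) and (b)$\Leftrightarrow$(c), transporting the gap condition through the order-reversing isomorphism of Lemma \ref{IConv}.
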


\begin{proof}
If $H(D)^+$ is a gap in $\cc\infty$, there exists an immediate successor of $H(D)$ in $\cc\infty$. This successor must be principal by Lemma \ref{ConvI}; thus, it coincides wih $H'$. This shows that (a) implies (b).

If there exists $i_{\max}=\max(S)$, then obviously $H'=H_{i_{\max}}$. Hence,
$$H(D)=\bigcup_{i\not\in S}H_i=H_i^*\subsetneq H_i.$$ Thus, (b) implies (c).

Finally, if $H(D)\subsetneq H'$, then $H'$ is the minimal element in $\left(\cc\infty\right)_{>H(D)}$. Therefore, $H(D)^+$ is a gap.  Thus, (c) implies (a).
\end{proof}\e

\noindent{\bf Definition. }If the conditions of Lemma \ref{convGap} hold, we say that the cut $(D,E)$ \emph{has a convexity gap}. \e

The improper cuts determined by $D=\emptyset,\,\g$ have both $H(D)=\g$ and $S=\emptyset$. Hence, they do not have a convexity gap. 

\subsection{Covariance subgroups of a subset of $\g$}

Let us associate a couple of \emph{covariance} convex subgroups to every subset $D\subset\g$.

For all $d\in D$, let $V_f(d)=V_f(D,d)$ be the convex subgroup generated by the set 
$$D_{\ge d}-d=\left\{d'-d\mid d'\in D,\ d'\ge d\right\}.$$ If $d<d'$, then $V_f(d)\supset V_f(d')$, because
$$
e\in D_{\ge d'}\ \imp\ 0\le e-d'<e-d\in D_{\ge d}-d\ \imp\ e-d'\in V_f(d).
$$
\vskip0.2cm 

\noindent{\bf Definition. }The \emph{final covariance} group $V_f(D)$, and \emph{initial covariance} group $V_i(D)$ of $D$ are defined as
$$
V_f(D)=\bigcap_{d\in D}V_f(d),\qquad V_i(D):=V_f(D\opp)=\bigcap_{d\in D}V_f(D\opp,d).
$$

The group $V_f(D)$ is said to be \emph{stable} if $V_f(D)=V_f(d)$ for some $d\in D$.

The group $V_i(D)$ is said to be \emph{stable} if $V_i(D)=V_f(D\opp,d)$ for some $d\in D$.

A covariance group which is not stable is said to be \emph{unstable}.\bs

The following basic properties of these groups are easy to check.\e

$\bullet$\quad If there exists $d_M=\max(D)$, then $V_f(D)=V_f(d_M)=0$ is stable. 

If there exists $d_m=\min(D)$, then $V_i(D)=V_f(D\opp,d_m)=0$ is stable.\e

$\bullet$\quad Let $\op{ini}(D)$, $\op{fin}(D)$ be the initial segment and final segment of $\g$ generated by $D$, respectively. Then,
$V_f(D)=V_f(\op{ini}(D))$ and $V_i(D)=V_i(\op{fin}(D))$.\e

$\bullet$\quad $V_f(D\opp,d)=V_f(-D,-d)$, \ for all $d\in D$.\e

$\bullet$\quad $V_f(-D)=V_i(D)$, \ $V_i(-D)=V_f(D)$.\e

$\bullet$\quad $V_f(D+a)=V_f(D)$, \ $V_i(D+a)=V_i(D)$, \,for all $a\in\g$.\e

$\bullet$\quad $V_f(\g)=V_i(\g)=\g$ is stable. We agree that $V_f(\emptyset)=V_i(\emptyset)=\g$ is unstable.

\subsection*{Covariance subgroups of a cut}
To every cut $(D,E)$ in $\g$ we may associate the couple of convex subgroups
$$
V_f(D,E):=V_f(D),\quad V_i(D,E):=V_i(E).
$$

The invariance group of a cut was a symmetric object, but these covariance subgroups of a cut do not always coincide. The most simple example is the cut $0^+$ in $\g=\Q$, for which $V_f(0^+)=0$ and $V_i(0^+)=\Q$. 
However, we have in full generality
$$
V_f(D,E)=V_i(-(D,E)),\qquad V_i(D,E)=V_f(-(D,E)).
$$

\begin{lemma}\label{breadth}
Let $(D,E)$ be a cut in $\g$, and denote $H=H(D)$. Then,
\begin{equation}\label{InTheMiddle}
H\subset V_f(D),\,V_i(E)\subset H', 
\end{equation}
where $H'$ is the convex subgroup defined in (\ref{Hprima}). 
In particular, either $V_f(D)=H$, or $V_f(D)=H'$, and an analogous statement holds for $V_i(E)$.

Moreover, the covariance groups of the cut $\left(D/H,E/H\right)$ in $\g/H$ are $V_f(D/H)=V_f(D)/H$ and $V_i(E/H)=V_i(E)/H$.
\end{lemma}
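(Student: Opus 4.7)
My approach is to handle the two inclusions $H \subset V_f(D), V_i(E) \subset H'$ separately, then deduce the dichotomy from the order-structure of $\cc\infty$ between $H$ and $H'$, and finally verify the quotient formula by a direct computation.

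For the lower inclusion, fix any $d \in D$ and observe that $H \subset H(D)$ forces $d + h \in D$ for every $h \in H$; in particular, for $h \ge 0$ we have $d + h \in D_{\ge d}$, so $h = (d+h) - d$ lies in the generating set $D_{\ge d} - d$ of $V_f(D, d)$. Convexity of $V_f(D, d)$ then promotes this to $H \subset V_f(D, d)$ for every $d \in D$, hence $H \subset V_f(D)$. The dual observation (for $h \ge 0$ in $H$, $e - h \in E$ whenever $e \in E$) yields $H \subset V_i(E)$ by an entirely symmetric argument.

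For the upper inclusion I may assume $S \ne \emptyset$, since otherwise $H' = \g$ is trivial. Fix $i \in S$; since $H \subsetneq H_i$, I may choose a positive $h \in H_i \setminus H$, and Lemma \ref{propertiesHD}(i) then writes $h = e - d$ with $d \in D$, $e \in E$. Any $d' \in D$ with $d' \ge d$ satisfies $d \le d' < e$, so $d' - d \in [0, h] \subset H_i$ by convexity, and hence $V_f(D, d) \subset H_i$. Intersecting over $i \in S$ yields $V_f(D) \subset H'$, and a symmetric argument handles $V_i(E) \subset H'$. The dichotomy $V_f(D) \in \{H, H'\}$ then reduces to verifying, via Lemma \ref{allH}, that no convex subgroup sits strictly between $H$ and $H'$: any such $K$ would decompose as a union of principal convex subgroups $H_j \subset K$, each of which, being strictly smaller than $H'$, must satisfy $j \notin S$ and hence $H_j \subset H$, producing the contradiction $K \subset H$.

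For the quotient formula, let $\pi: \g \to \g/H$ be the projection. Since $D$ is a union of $H$-cosets (because $H \subset H(D)$), the description of the quotient order gives $(D/H)_{\ge \bar d} = \pi(D_{\ge d})$, so the generating set of $V_f(D/H, \bar d)$ is $\pi(D_{\ge d} - d)$. Combined with $V_f(D, d) \supset H$ from the first step, a push-forward/pull-back argument on convex subgroups produces $V_f(D/H, \bar d) = V_f(D, d)/H$. Finally, since each $V_f(D, d)$ contains $H$, intersection commutes with quotienting by $H$, so $V_f(D/H) = V_f(D)/H$; the formula for $V_i$ follows by the dual argument. The step I expect to be most delicate is the identity $(D/H)_{\ge \bar d} = \pi(D_{\ge d})$, whose verification hinges on the fact that, in the quotient order, $\bar{d}' > \bar d$ is equivalent to $d' > d$ together with $d' - d \notin H$; every other ingredient is a transparent consequence of convexity.
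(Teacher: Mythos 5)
Your proof is correct and follows essentially the same route as the paper's: the lower inclusion from $d+H\subset D$, the upper inclusion via Lemma \ref{propertiesHD}(i) applied to positive elements outside $H$ (you index over $i\in S$ where the paper intersects over all positive $a\in\g\setminus H$, which is equivalent since $H'=\bigcap_{i\in S}H_i$), and symmetry for $V_i(E)$. The additional detail you supply for the dichotomy and for the quotient formula --- which the paper dismisses as straightforward --- is accurate.
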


\begin{proof}
For all $d\in D$, the condition $d+H\subset D$ implies $H\subset V_f(d)$. Therefore, $H\subset V_f(D)$.

Take any positive $a\in\g\setminus H$; Lemma \ref{propertiesHD}(i) shows the existence of $d\in D$ such that $d+a>D$. For all $d'\in D_{\ge d}$ we have $0\le d'-d<a$. Hence,
\begin{equation}\label{Vd}
V_f(D)\subset V_f(d)\subset H(a),
\end{equation} 
where $H(a)$ is the principal convex subgroup generated by $a$. Since this holds for all  $a\in\g\setminus H$, we deduce that  $V_f(D)\subset H'$.   

Now, since $H(-D)=H(D)=H$, and $V_f(D\opp)=V_f(-D)$, the above arguments show that $H\subset V_i(E)=V_f(D\opp)\subset H'$ as well. 
This ends the proof of (\ref{InTheMiddle}). 

The proof of the last statement is straightforward.
\end{proof}

\begin{lemma}\label{gap->stable}
If a cut $(D,E)$ in $\g$ has a convexity gap, then its covariance groups $V_f(D)$, $V_i(E)$ are both stable.
\end{lemma}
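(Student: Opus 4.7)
Set $H = H(D)$ and let $H'$ be as in (\ref{Hprima}). By Lemma \ref{convGap}, the convexity-gap hypothesis says $H \subsetneq H'$ and no convex subgroup of $\g$ lies strictly between them. Lemma \ref{breadth} already pins both $V_f(D)$ and $V_i(E)$ to the two-element set $\{H, H'\}$. The plan is to show that whichever of these values $V_f(D)$ takes, that value is actually attained by $V_f(d)$ for some sufficiently large $d \in D$; the argument for $V_i(E)$ then follows by symmetry.

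The key input is the construction embedded in the proof of Lemma \ref{breadth}: choosing any positive $a \in H' \setminus H$ forces $H(a) = H'$ (since $H$ is the immediate predecessor of $H'$), and Lemma \ref{propertiesHD}(i) supplies a $d_0 \in D$ with $d_0 + a \in E$. The chain (\ref{Vd}) then yields $V_f(d_0) \subset H(a) = H'$. By the monotonicity of $V_f$ in $d$, every $d \in D$ with $d \ge d_0$ satisfies $H \subset V_f(D) \subset V_f(d) \subset V_f(d_0) \subset H'$, so the gap forces $V_f(d) \in \{H, H'\}$. From here a short case split suffices. If $V_f(D) = H'$, then automatically $V_f(d) = H'$ for every $d \ge d_0$, so $V_f(D)$ is stable. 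If $V_f(D) = H$, I would argue by contradiction: were $V_f(d) = H'$ for every $d \ge d_0$, the identity $V_f(D) = \bigcap_{d \in D} V_f(d) = \bigcap_{d \ge d_0} V_f(d)$ (the values $V_f(d)$ with $d < d_0$ do not affect the intersection since they contain $V_f(d_0)$) would collapse to $H'$, contradicting $V_f(D) = H$. So some $d^* \ge d_0$ witnesses $V_f(d^*) = V_f(D)$.

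The stability of $V_i(E)$ follows by applying the same argument to the cut $-(D,E) = (-E,-D)$. Indeed, by the identities $V_f(-D) = V_i(D)$ and $H(-D) = H(D)$ recorded after the definition of the covariance groups, the cut $-(D,E)$ has invariance group $H$ and associated convex subgroup $H'$, and its $V_f$-group equals $V_i(E)$; the witness $d^* \in -E$ corresponds via the relation $V_f(D\opp,d) = V_f(-D,-d)$ to the desired $e^* \in E$. No genuine obstacle arises beyond the bookkeeping of this translation and of the monotonicity direction of the covariance subgroups; the substantive content is entirely in the $V_f(D)$ argument, whose crux is that the convexity gap leaves only two possible values for $V_f(d)$ on any tail of $D$.
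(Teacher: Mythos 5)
Your proof is correct and rests on the same pillars as the paper's own argument: the sandwich $H\subset V_f(d)\subset H'$ coming from (the proof of) Lemma \ref{breadth}, the element $d_0$ produced by Lemma \ref{propertiesHD}(i) together with the chain (\ref{Vd}) applied to a positive $a\in H'\setminus H$, and the passage to $-(D,E)$ to handle $V_i(E)$. The only difference is organizational: the paper first reduces to $H=\{0\}$ by passing to $\g/H$ and then splits on whether $\max(D)$ exists, whereas you stay in $\g$, use the gap to force $V_f(d)\in\{H,H'\}$ on a tail of $D$, and split on the value of $V_f(D)$ --- both versions amount to essentially the same argument.
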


\begin{proof}Let $H=H(D)$ be the invariance group of the cut.
The group $V_f(D)$ is stable if and only if the final covariance group $V_f(D)/H$ of the cut $(D/H,E/H)$ is stable. Hence,  we may assume $H=\{0\}$. In this case,  $H'$ is a minimal nonzero convex subgroup. By Lemma \ref{ConvI}, $H'$ is a principal convex subgroup. 

If there exists $d_{\max}=\max(D)$, then $V_f(D)=V_{d_{\max}}=\{0\}$ is stable. 

Otherwise,
$V_f(d)\supset H'$ for all $d\in D$; hence, $V_f(D)\supset H'$ and this implies $V_f(D)=H'$ by Lemma \ref{breadth}. Let $a\in V_f(D)$ be a positive generator of $H'$ as a convex subgroup. Since $a\not\in H$,  the arguments in the proof of Lemma \ref{breadth} lead to the inclusions (\ref{Vd}) for some $d\in D$:
$$
V_f(D)\subset V_f(d)\subset H(a)=H'=V_f(D).
$$
Thus, $V_f(D)=V_f(d)$ is stable too.

Since the cut $-(D,E)$ has the same invariance group $H$, it has a convexity gap too. Thus, $V_i(E)=V_f(-(D,E))$ is stable.
\end{proof}

\noindent{\bf Definition. }A cut $(D,E)$ in $\g$ is said to be \emph{vertical} if $H(D)\subsetneq V_f(D)$. 
Otherwise, we say that $(D,E)$ is \emph{horizontal}. \e

By Lemma \ref{breadth}, all cuts which do not have a convexity gap are horizontal, because they satisfy
$$
H(D)=V_f(D)=V_i(E)=H'.
$$

The adjective \emph{vertical} is motivated by the applications to valuation theory. There is a certain procedure of limit augmentation of valuations, which involves increasing families $D$ of values in a certain ordered abelian group $\g$. The cut in $\g$  determined by the initial segment generated by $D$ is vertical if and only if the family $D$ is ``vertically bounded"  in the terminology of \cite[Sec. 4]{AFFGNR}.

\subsection{The six types of cuts}\label{subsec6types}
Let us study the possible types of cuts according to its convexity-gap character plus the fact of being horizontal/vertical, and their covariance groups being stable/unstable.\bs

\noindent{\bf Definition. }Let $(D,E)$ be a cut in $\g$, with invariance group $H$. We say that $(D,E)$ is a \emph{ball cut}  if the cut $(D/H,E/H)$ in the quotient group $\g/H$ is principal. \bs

Thus, a  ball cut in $\g$ takes the form 
$$
(a+H)^+ \quad\mbox{ \ or \ }\quad (a+H)^-,
$$
for some $a\in D$ and an arbitrary convex subgroup $H\subset\g$, which clearly becomes the invariance group of the cut.

For instance, the principal cuts and the improper cuts are ball cuts, with $H=\{0\}$ and $H=\g$, respectively. 

Note that $-(a+H)^+=(-a+H)^-$. Thus, the set of all ball cuts is closed under the ``multiplication by $-1$" operation.

For the non-ball cuts, the situation is quite rigid. The convexity-gap character determines all other properties of the cut.

\begin{lemma}Let $(D,E)$ be a non-ball cut in $\g$, with invariance group $H$.
\begin{enumerate}
\item If $(D,E)$ has a convexity gap, then $H\subsetneq V_f(D)=V_i(E)$ and the covariance groups are stable. 
\item If $(D,E)$ does not have a convexity gap, then $H=V_f(D)=V_i(E)$ and the covariance groups are unstable.
\end{enumerate}
\end{lemma}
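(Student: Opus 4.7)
The plan is to observe that, by Lemma \ref{breadth}, in both cases each covariance group must equal either $H$ or $H'$, and then to tie the equality with $H$ together with stability to the ball-cut property. So the strategy is to isolate a single \emph{key claim} and use it, together with Lemma \ref{convGap} and Lemma \ref{gap->stable}, to read off the two cases almost mechanically.

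The key claim I would establish first is: \emph{if $V_f(D)$ is stable and equals $H$, then $(D,E)$ is a ball cut} (and symmetrically for $V_i(E)$). Stability gives $V_f(D)=V_f(d)$ for some $d\in D$. Since $V_f(d)$ is the convex subgroup generated by $D_{\ge d}-d$, every element of $D_{\ge d}-d$ lies in $V_f(d)=H$, i.e.\ $D_{\ge d}\subset d+H$. On the other hand, $H=H(D)$ forces $d+H\subset D$, so $d+H_{\ge 0}\subset D_{\ge d}$. Combining gives $D_{\ge d}=d+H_{\ge 0}$, which means $d+H=\max(D/H)$ in $\g/H$; hence $(D/H,E/H)$ is principal, making $(D,E)$ a ball cut. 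The corresponding statement for $V_i(E)$ follows by applying the same argument to $-(D,E)$, using $H(-D)=H$ and $V_f(-D)=V_i(E)$.

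Granting this key claim, the two items follow immediately. For (2), no convexity gap means $H=H'$ by Lemma \ref{convGap}, and then Lemma \ref{breadth} forces $V_f(D)=V_i(E)=H$; since the cut is non-ball, the contrapositive of the key claim rules out stability, so both covariance groups are unstable. For (1), a convexity gap gives $H\subsetneq H'$, and Lemma \ref{gap->stable} tells us both covariance groups are stable; the key claim combined with the non-ball hypothesis then forbids $V_f(D)=H$ and $V_i(E)=H$, leaving only the other option from Lemma \ref{breadth}, namely $V_f(D)=V_i(E)=H'$.

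I do not expect a serious obstacle: the only delicate point is making explicit that the elements generating a convex subgroup automatically lie inside it, so that $V_f(d)=H$ really does give $D_{\ge d}-d\subset H$ and not merely the weaker containment of the generated convex closure. Everything else is bookkeeping with the previously established lemmas.
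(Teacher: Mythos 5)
Your proof is correct and rests on the same core computation as the paper's: the equivalence between $V_f(d)=H$ and $d+H$ being the maximum of $D/H$, combined with Lemmas \ref{breadth}, \ref{convGap} and \ref{gap->stable}. The only cosmetic difference is that you package this as a contrapositive ``key claim'' and work in $\g$ directly, whereas the paper reduces to the quotient $\g/H$ and argues that a non-principal cut has $V_f(d)\supsetneq\{0\}$ for every $d$; the logical content is identical.
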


\begin{proof}
All properties of the cut are preserved when we pass to the quotient $\g/H$. Thus, we may assume that $H=\{0\}$ and the cut is not principal. 

For all $d\in D$ there exists a positive $h\in\g$ such that $d+h\in D$. Since $h\in V_f(d)$, we see that $V_f(d)\ne\{0\}$ for all $d\in D$. The same argument shows that $V_f(E\opp,e)\ne\{0\}$ for all $e\in E$.

If the cut has a convexity gap, the covariance groups are stable by Lemma \ref{gap->stable}. Thus, 
$V_f(D)=V_f(d)\supsetneq H$ for some $d\in D$, and simultaneously
$$ 
V_i(E)=V_f(E\opp)=V_f(E\opp,e)\supsetneq H,\quad \mbox{for some }e\in E.
$$
Therefore, $V_f(D)=V_i(E)=H'$, by Lemma \ref{breadth}.

If the cut does not have a convexity gap, then $H=V_f(D)=V_i(E)=H'=\{0\}$. Hence $V_f(D)\ne V_f(d)$ for all $d\in D$ and $V_i(E)\ne V_f(E\opp,e)$ for all $e\in E$. Both covariance groups are unstable.
\end{proof}

On the other hand, since the analysis of the properties of ball cuts may be reduced to the analysis of principal cuts, it is easy to check that we obtain the four types of ball cuts, described in the table of Figure \ref{TableBallCuts}.

The notation \ {\tt b/nb} \ stands for ball/non-ball and \ {\tt G/NG} stands for convexity-gap/non-convexity-gap, respectively.

\begin{figure}[h]
\caption{The six types of cuts. The initial segment $S\in\inii$ is determined by $H=H_S$, where $H$ is the invariance group of the cut. The convex group $H'\supset H$ is defined in equation (\ref{Hprima}).}\label{TableBallCuts}\e

\begin{center}
\as{1.3}
\begin{tabular}{|c|c|c|c|c|}
\hline
$(D,E)$&{\bf $\exists \max(S)$}&$V_f(D)$&$V_i(E)$&{\bf notation} \\\hline
$\left(a+H\right)^+$& yes&$H\;$ \quad stable&$H'\;$ \quad \!stable&$\mbox{\tt (b+G)}^+$ \\\hline
$\left(a+H\right)^+$& no&$H\;$ \quad stable&$H\;$ unstable&$\mbox{\tt (b+NG)}^+$ \\\hline
$\left(a+H\right)^-$& yes&$H'\;$ \quad \!stable&$H\;$ \quad stable&$\mbox{\tt (b+G)}^-$ \\\hline
$\left(a+H\right)^-$& no&$H\;$ unstable&$H\;$ \quad stable&{\tt (b+NG)}$^-$ \\\hline
non-ball& yes&$H'\;$ \quad \!stable&$H'\;$ \quad \!stable&{\tt nb+G} \\\hline
non-ball& no&$H\;$ unstable&$H\;$ unstable&{\tt nb+NG} \\\hline
\end{tabular}
\end{center}
\end{figure}

\noindent{\bf Examples. }Let us exhibit an example of each type of cut. In all examples, the invariance group is $H=0$ and the corresponding initial segment is $S=I$.\e

Take $\g=\Q$, and let $\xi\in\R\setminus\Q$ be an irrational number. 
$$
0^+ \quad \mbox{\tt (b+G)}^+,\qquad 0^-\quad \mbox{\tt (b+G)}^-,\qquad \left(\Q_{<\xi},\Q_{>\xi}\right)\quad \mbox{\tt nb+G}.
$$

Take the Hahn sum $\g=\Q^{(\N)}$, and let $\xi\in\Q^\N\setminus\Q^{(\N)}$ be a vector with an infinite number of nonzero coordinates. 
$$
0^+ \quad \mbox{\tt (b+NG)}^+,\qquad 0^-\quad \mbox{\tt (b+NG)}^-,\qquad
\left(\g_{<\xi},\g_{>\xi}\right)\quad \mbox{\tt nb+NG}.
$$\vskip0.2cm

The shift $(D,E)\mapsto (D,E)+a$ preserves the type of all cuts.
However, multiplication by $-1$ acts on the six types as follows
$$
\mbox{\tt (b+G)}^+\ \leftrightarrow\ \mbox{\tt (b+G)}^-,\qquad 
\mbox{\tt (b+NG)}^+\ \leftrightarrow\ \mbox{\tt (b+NG)}^-,\qquad 
\mbox{\tt nb+G}\,\circlearrowleft,\qquad 
\mbox{\tt nb+NG}\,\circlearrowleft. 
$$

\section{Small extensions of ordered groups}\label{secSmall}



The \emph{rational rank} of an abelian group $G$ is the dimension of its divisible hull as a $\Q$-vector space:
 $$\rrk(G)=\dim_\Q(G\otimes_\Z\Q).$$ 
 
An extension of ordered groups $\g\hk\La$ is \emph{commensurable} if $\rrk(\La/\g)=0$. That is, if $\La/\g$ is a torsion group.\e

The extension $\g\hk\gq$ is simultaneously the minimal divisible extension of $\g$ and the maximal commensurable extension of $\g$.

\begin{lemma}\label{MaxComm}
For any commensurable extension $\g\hk \La$, there exists a unique embedding of $\La$ into $\gq$ such that  the composition $\g\hk \La\hk\gq$ is the canonical embedding.
\end{lemma}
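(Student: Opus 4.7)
The plan is to construct the required embedding $\varphi\colon \La \hra \gq$ directly, then use Lemma \ref{MInDiv} implicitly (or its proof pattern) for uniqueness.

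First, I would define $\varphi$ explicitly. For any $\la\in\La$, commensurability gives some $n\in\N$ such that $n\la\in\g$; set $\varphi(\la)=(n\la)\otimes(1/n)\in\gq$. The very first thing to verify is well-definedness: if $m\la\in\g$ is another such integer, then both $(n\la)\otimes(1/n)$ and $(m\la)\otimes(1/m)$ equal $(mn\la)\otimes(1/mn)$ in $\gq$, so $\varphi(\la)$ does not depend on the choice of $n$.

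Next I would check the homomorphism and order properties. Given $\la_1,\la_2\in\La$, choosing a common denominator $n$ with $n\la_1,n\la_2\in\g$ shows additivity of $\varphi$ from bilinearity of the tensor product; and taking $n=1$ shows that $\varphi$ restricts to the canonical embedding $\g\hra\gq$ on $\g$. For order preservation, note that the embedding $\g\hra\La$ is order-preserving, so $\la_1<\la_2$ in $\La$ forces $n\la_1<n\la_2$ in $\g$ for positive $n$; the definition of the ordering on $\gq$ then gives $\varphi(\la_1)<\varphi(\la_2)$. Since an order-preserving homomorphism is automatically injective, $\varphi$ is an embedding of ordered groups.

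For uniqueness, suppose $\psi\colon\La\hra\gq$ is any embedding of ordered groups whose restriction to $\g$ is the canonical map. For $\la\in\La$ and any $n$ with $n\la\in\g$, we have
\[
n\psi(\la)=\psi(n\la)=(n\la)\otimes 1=n\bigl((n\la)\otimes(1/n)\bigr)
\]
in $\gq$. Since $\gq$ is torsion-free, this forces $\psi(\la)=(n\la)\otimes(1/n)=\varphi(\la)$, so $\psi=\varphi$.

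There is no genuinely hard step here; the only thing worth double-checking is that the formula $\varphi(\la)=(n\la)\otimes(1/n)$ is well-defined and respects both the group law and the ordering, and that uniqueness really does follow from the torsion-freeness of $\gq$. Alternatively, one can package the whole argument by applying Lemma \ref{MInDiv} to the composition $\g\hra\La\hra\La_\Q$: this produces a canonical embedding $\gq\hra\La_\Q$, and commensurability implies that its image contains $\La$, yielding $\varphi$ as the inverse of this embedding restricted to $\La$.
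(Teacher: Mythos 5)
Your proof is correct, and since the paper states Lemma \ref{MaxComm} without proof, your argument supplies exactly the standard verification the authors left implicit: the formula $\varphi(\la)=(n\la)\otimes(1/n)$ is well defined, additive, order-preserving by the paper's definition of the ordering on $\gq$, and unique because $\gq$ is torsion-free. Both your direct construction and the alternative via Lemma \ref{MInDiv} applied to $\g\hra\La\hra\La\otimes_\Z\Q$ are sound.
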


Two extensions $\ \g\hk \La$, $\ \g\hk \La'$ \ 
are said to be \emph{$\g$-equivalent} if there is an isomorphism $\ \La\ism\La'$ \ of ordered groups fitting into a commmutative diagram:
$$
\as{1.2}
\begin{array}{ccc}
\La&&\\
\uparrow&\searrow&\\
\g&\lra&\La'
\end{array}
$$
In this case, we write $\La\sim_\g\La'$.
By Lemma \ref{MaxComm}, every commensurable extension of $\g$ is $\g$-equivalent to a unique subgroup of $\gq$.

\subsection{Small extensions}\label{subsecSmall}
For an arbitrary extension $\iota\colon \g\hk \La$, let $\d\subset \La$ be the relative divisible closure of $\g$ in $\La$:
$$
\d=\left\{x\in \La\mid nx\in\iota(\g),\ \mbox{for some}\ n\in\N\right\}.
$$
Equivalently, $\d$ is the maximal commensurable extension of $\g$ in $\La$.\e

\noindent{\bf Definition.} We say that $\g\hk \La$ is a \emph{small extension} if $\La/\d$ is a cyclic group.\e

Therefore, a small extension is either commensurable ($\d=\La$), or it has $\rrk(\La/\g)=1$ and the quotient  $\La/\d$ is isomorphic to $\Z$.\e

This definition is motivated by the following result.

\begin{theorem}\label{AllSmall}
Let $K$ be a field and let $\mu\colon \kx\to\La\infty$ be a valuation on the polynomial ring $\kx$. Let $\g=\mu(K^*)$ and let $\gm$ be the subgroup of $\La$ generated by $\mu(\kx)$. Then, $\g\subset\gm$ is a small extension of ordered groups.
\end{theorem}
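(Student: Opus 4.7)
The plan is to fix $\beta = \mu(x) \in \g_\mu$ and analyze the values $\mu(f)$ via the Newton polygon, splitting into two cases according to whether $\beta$ lies in the relative divisible closure $\d$ of $\g$ inside $\g_\mu$. The key initial observation is a Newton polygon bound: for any nonzero $f = \sum_i a_i x^i \in \kx$, the ultrametric property of $\mu$ gives
$$
\mu(f) \;\ge\; M_f := \min_i\bigl(\mu(a_i) + i\beta\bigr),
$$
with $M_f \in \g + \Z\beta$, and equality $\mu(f) = M_f$ holds whenever the minimum is attained at a unique index.

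Suppose first that $\beta \notin \d$, so that no nonzero integer multiple of $\beta$ can lie in $\g$. If $\mu(a_i) + i\beta = \mu(a_j) + j\beta$ for some $i \ne j$, then $(i-j)\beta = \mu(a_j) - \mu(a_i) \in \g$, a contradiction. Hence the minimum defining $M_f$ is always uniquely attained, $\mu(f) = M_f$ for every $f$, and therefore $\g_\mu = \g + \Z\beta$. Unwinding the definition of $\d$ in this setting shows that $\d = \g$, so $\g_\mu/\d \cong \Z$ is infinite cyclic, generated by the class of $\beta$, and the extension is small.

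The remaining case $\beta \in \d$ is the main obstacle. Here $\g + \Z\beta \subseteq \d$, so any $\mu(f) = M_f$ already lies in $\d$, and $\g_\mu/\d$ is generated only by the classes of ``excess'' values $\mu(f)$ arising from polynomials whose Newton polygon has several minimal vertices. Writing $e \ge 1$ for the smallest integer with $e\beta \in \g$, the tied indices must all lie in a single congruence class modulo $e$, which already imposes considerable structure. To produce a single cyclic generator for $\g_\mu/\d$, one can first invoke Abhyankar's inequality $\rrk(\g_\mu/\g) + \mathrm{tr.deg}(k_\mu/k) \le 1$ to bound $\rrk(\g_\mu/\d)$ by $1$, and then use a structural argument --- most naturally, the MacLane-Vaqui\'e theory of key polynomials, or a direct analysis of polynomials of least degree satisfying $\mu(f) > M_f$ --- to identify a ``minimal'' excess value $\gamma$ such that every other excess value is an integer multiple of $\gamma$ modulo $\d$. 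This finer step, rather than the Newton polygon computation itself, is where the real work lies.
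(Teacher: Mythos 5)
Your Newton polygon setup and the first case are fine: if no nonzero integer multiple of $\beta=\mu(x)$ lies in $\Gamma$, the minimum defining $M_f$ is attained at a unique index, so $\mu(f)=M_f$ for every $f$, $\Gamma_\mu=\Gamma+\Z\beta$, and $\Gamma_\mu/\Gamma\cong\Z$. But this is the easy half of the theorem, and in the remaining case --- $e\beta\in\Gamma$ for some $e\ge 1$ --- you do not give a proof; you only name strategies that might yield one. That case is where essentially all of the content lies, and your own closing sentence concedes as much. Note in particular that the rank bound you propose to import (Abhyankar's inequality, which moreover presupposes trivial support so that $\mu$ extends to $K(x)$) only gives $\operatorname{rr}(\Gamma_\mu/\Gamma)\le 1$; hence $\Gamma_\mu/\Delta$ is a torsion-free group of rational rank at most one, i.e.\ isomorphic to a subgroup of $\Q$, and such a group need not be cyclic (consider $\Z[1/p]$ or $\Q$ itself). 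So ``identify a minimal excess value $\gamma$ such that every other excess value is an integer multiple of $\gamma$ modulo $\Delta$'' is not a finer step to be filled in later --- it \emph{is} the theorem. Proving it requires a uniform bound on the denominators of all excess values, which comes from relating the ramification of $\mu(f)$ over $\Delta$ to $\deg f$ (equivalently, from the MacLane--Vaqui\'e augmentation structure); none of this is carried out. You also leave the nontrivial-support case untreated: there $\mu(f)=\infty$ is possible and polynomials in the support must be excluded from the Newton polygon analysis.

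For what it is worth, the paper does not reprove this statement either: it cites \cite[Thm.~1.5]{Kuhl} for valuations of trivial support, and handles nontrivial support by the one-line observation that $\Gamma_\mu$ is then the value group of an extension of $v$ to a finite field extension of $K$, hence $\Gamma\subset\Gamma_\mu$ is commensurable. Your Case 1 is a correct self-contained fragment, but as a proof of the theorem the proposal has a genuine gap at its central point.
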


This theorem was proved in  \cite[Thm. 1.5]{Kuhl} for valuations with  trivial support; that is, valuations that may be extended to the rational field $K(x)$. For  valuations with non-trivial support the extension $\g\subset\gm$ is commensurable, because $\gm$ is the value group of an extension of $\mu_{\mid K}$ to a finite extension of $K$.  

Not all small extensions arise from valuations on a polynomial ring. In \cite{Kuhl} it is shown that the divisible closure of $\g$ in $\gm$ must be countably generated, and it must be finitely generated if $\rrk(\gm/\g)=1$.  

Let us exhibit a few examples of small and non-small extensions. Consider the following four extensions of $\g=\Z$:\e

\as{1.3}
\begin{tabular}{ll}
(a)\quad $\Z\subset \Z\oplus\root3\of2\,\Z$,& \qquad(b)\quad $\Z\subset\Z[\root3\of2]$,\\
(c)\quad $\Z\hk \Q\times\Z,\quad m\mapsto(m,0)$,&\qquad(d)\quad $\Z\hk \Q\times\Z,\quad m\mapsto(0,m)$.
\end{tabular}\e
\as{1}

The extensions (a) and (b) preserve the rank, while (c) and (d) increase the rank by one. On the other hand, only (a) and (c) are small.

Actually, all small extensions ``increase the rank at most by one". Let us be more precise about the meaning of this statement.



By Lemma \ref{rkEmbed}, any extension $\g\hk\La$ induces two embeddings of ordered sets
$$
\cv(\g)\hra\cv(\La),\qquad \pcv(\g)\hra\pcv(\La).
$$

The following inequality follows easily from Hahn's embedding theorem:
\begin{equation}\label{rrrk}
\rrk(\La/\g)\ge \sharp \left(\pcv(\La)\setminus\pcv(\g)\right),
\end{equation}
where we identify $\pcv(\g)$ with its image in $\pcv(\La)$ under the above embedding.

Now, it is easy to deduce from Lemmas \ref{ConvI} and \ref{IConv} that
$$\sharp \left(\pcv(\La)\setminus\pcv(\g)\right)=0\sii \sharp \left(\cv(\La)\setminus\cv(\g)\right)=0.$$
$$\sharp \left(\pcv(\La)\setminus\pcv(\g)\right)=1\sii \sharp \left(\cv(\La)\setminus\cv(\g)\right)=1.$$ \vskip0.2cm

\noindent{\bf Definition.} We say that the extension $\ \g\hk\La\ $ \emph{increases the rank at most by one} if $$\sharp \left(\pcv(\La)\setminus\pcv(\g)\right)\le1.$$

If $\sharp \left(\pcv(\La)\setminus\pcv(\g)\right)=0$ we say that $\g\hk\La$ \emph{preserves the rank}.

If $\sharp \left(\pcv(\La)\setminus\pcv(\g)\right)=1$ we say that $\g\hk\La$ \emph{increases the rank by one}.\e

Therefore, the following result follows immediately from (\ref{rrrk}).

\begin{lemma}\label{small<=1}
The extension $\g\hk\gq$ preserves the rank and every small extension $\g\hk\La$ increases the rank at most by one 
\end{lemma}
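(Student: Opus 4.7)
The plan is to deduce both statements directly from the inequality
\[
\rrk(\La/\g)\ge \sharp\!\left(\pcv(\La)\setminus\pcv(\g)\right)
\]
stated as (\ref{rrrk}), together with the structural description of $\g\hk\gq$ and of small extensions given in Subsection~\ref{subsecSmall}. Since (\ref{rrrk}) is taken as granted (the paper indicates it follows from Hahn's embedding theorem), the argument is essentially a bookkeeping exercise on rational ranks; I expect no genuine obstacle, the only subtle point being to make sure the two notions ``preserves the rank'' and ``increases the rank at most by one'' are read through the definitions just introduced, i.e.\ in terms of the cardinality of $\pcv(\La)\setminus\pcv(\g)$.

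First I would handle the divisible hull. By construction $\gq=\g\otimes_\Z\Q$, so every element of $\gq$ has a positive integer multiple lying in $\g$; equivalently, $\gq/\g$ is a torsion group and therefore $\rrk(\gq/\g)=0$. Applying (\ref{rrrk}) with $\La=\gq$ gives
\[
0=\rrk(\gq/\g)\ge \sharp\!\left(\pcv(\gq)\setminus\pcv(\g)\right),
\]
so this cardinality is zero, which by the definition above means that $\g\hk\gq$ preserves the rank.

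Next, let $\g\hk\La$ be a small extension and let $\d\subset\La$ be the relative divisible closure of $\g$, as in the definition. By assumption $\La/\d$ is cyclic, so either $\d=\La$ (the commensurable case) or $\La/\d\simeq\Z$. In the commensurable case, $\La\hk\gq$ by Lemma~\ref{MaxComm}, and $\La/\g$ is torsion, hence $\rrk(\La/\g)=0$. In the other case, any preimage in $\La$ of a generator of $\La/\d\simeq\Z$, together with a basis of $\d\otimes_\Z\Q$ over $\gq$, gives $\rrk(\La/\d)=1$; combined with $\rrk(\d/\g)=0$ (since $\d/\g$ is torsion by maximality of $\d$ as a commensurable extension) this yields $\rrk(\La/\g)=1$. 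In both cases $\rrk(\La/\g)\le 1$, so (\ref{rrrk}) delivers
\[
\sharp\!\left(\pcv(\La)\setminus\pcv(\g)\right)\le 1,
\]
which is exactly the statement that $\g\hk\La$ increases the rank at most by one. This completes the proof sketch; no step seems to pose a real difficulty, the whole argument being a straightforward combination of the inequality (\ref{rrrk}) with the definitions of divisible hull and small extension.
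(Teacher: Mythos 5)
Your proof is correct and follows exactly the route the paper intends: the paper states that the lemma ``follows immediately from (\ref{rrrk})'', and your write-up simply fills in the immediate bookkeeping ($\rrk(\gq/\g)=0$ since $\gq/\g$ is torsion, and $\rrk(\La/\g)\le 1$ for a small extension by additivity of rational rank along $\g\subset\d\subset\La$). No discrepancy with the paper's argument.
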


\noindent{\bf Caution!}  If $\rk(\g)$ is infinite, this terminology abuses of language. 
If $\g\hk\La$ preserves the rank, then obviously $\rk(\g)=\rk(\La)$, but the converse is not true.

For instance, $\N_0=\{0\}+\N$ is isomorphic to $\N$ as an ordered set; hence, the ordered groups $\R^{\N}_{\lx}$ and  $\R^{\N_0}_{\lx}$ have the same rank. However, the natural embedding $\R^{\N}_{\lx}\hk\R^{\N_0}_{\lx}$ increases the rank by one.\e

\subsection{Proper small extension generated by a cut}\label{subsecGD}
In this section, we assume that $\g=\gq$ is a divisible group. 

For any cut $D=(D^L,D^R)$ in $\g$, we consider a formal symbol $x=x_D$ and we build up the abelian group 
$$
\gd= x\Z\oplus\g=\left\{mx+b\mid m\in\Z,\ b\in\g\right\}.
$$ 

There is a unique ordering on $\gd$ which is compatible with the group structure and satisfies
$D^L<x<D^R$.
Namely, 
$$
mx+b\le nx+a\sii (m-n)x\le a-b\sii (m-n)D^L\le a-b.
$$

Therefore, $\g\subset \gd$ is a proper (incommensurable) small extension of ordered abelian groups. 

\begin{lemma}\label{Gshift}
Let $D=(D^L,D^R)$ be a cut in $\g$ and take $a\in \g$. Then, the three extensions $\gd$, $\g(-D)$, $\g(D+a)$ are $\g$-equivalent.
\end{lemma}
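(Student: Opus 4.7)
The strategy is to reduce both equivalences to a single uniqueness principle for $\gd$. Namely, if $\g \subset \Lambda$ is an ordered group extension containing an element $y \in \Lambda$ with $\Lambda = y\Z \oplus \g$ (as abelian groups) and $D^L < y < D^R$, then the $\g$-linear extension of $x_D \mapsto y$ is an isomorphism of ordered groups $\gd \cong \Lambda$, and in particular $\gd \sim_\g \Lambda$. This is essentially a reading of the construction of $\gd$ at the beginning of Section~\ref{subsecGD}: the formal equivalences
\[
mx_D + b \le nx_D + c \;\iff\; (m-n)x_D \le c-b \;\iff\; (m-n)D^L \le c-b
\]
use only the group operation, divisibility of $\g$, and the condition $D^L < x_D < D^R$. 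The same equivalences therefore hold with $x_D$ replaced by any $y$ satisfying the hypotheses, so the indicated assignment transports the order on $\gd$ to the order on $\Lambda$.

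Granted the principle, both $\g$-equivalences follow by exhibiting appropriate realizers of $D$. For $\gd \sim_\g \g(D+a)$, take $\Lambda = \g(D+a)$ and $y = x_{D+a} - a$. Since $a \in \g$, one has $y\Z \oplus \g = x_{D+a}\Z \oplus \g = \g(D+a)$, while the defining inequality $(D+a)^L < x_{D+a} < (D+a)^R$ translates, after subtracting $a$, to $D^L < y < D^R$.

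For $\gd \sim_\g \g(-D)$, take $\Lambda = \g(-D)$ and $y = -x_{-D}$. Then $y\Z \oplus \g = x_{-D}\Z \oplus \g = \g(-D)$, and the condition $(-D)^L < x_{-D} < (-D)^R$, i.e.\ $-D^R < x_{-D} < -D^L$, becomes $D^L < y < D^R$ after negation.

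The only step that requires any care is the uniqueness principle itself, but this is a short, direct unpacking of the definition of the order on $x_D\Z \oplus \g$ and presents no substantial obstacle; once it is in place, both equivalences are obtained by a single substitution.
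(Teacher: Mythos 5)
Your proposal is correct and follows essentially the same route as the paper: the paper's proof simply exhibits the two maps $x_D\mapsto -x_{-D}$ and $x_D\mapsto x_{D+a}-a$, relying (as you do) on the uniqueness of the compatible ordering on $x\Z\oplus\g$ realizing a given cut, which is already asserted in the construction of $\gd$ at the start of Section \ref{subsecGD}. Your explicit verification that $-x_{-D}$ and $x_{D+a}-a$ realize $D$ is the small amount of detail the paper leaves implicit.
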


\begin{proof}
The $\g$-isomorphism between $\gd$ and $\g(-D)$ is determined by $x_D\mapsto-x_{-D}$.   
The $\g$-isomorphism between $\gd$ and $\g(D+a)$ is determined by $x_D\mapsto x_{D+a}-a$.   
\end{proof}


Our first aim is to characterize those cuts $D$ for which the extension $\g\subset\gd$ preserves the rank. Let us denote  
$$
I=\pcv(\g),\qquad I(D)=\pcv(\gd).
$$


Let $\val_\g$ be the natural valuation on $\g$ and let us denote simply by $\val$ the natural valuation on $\gd$. By Lemma \ref{rkEmbed}, we have a commutative diagram
$$
\as{1.4}
\begin{array}{rcl}
\g&\hra&\gd\\
\mbox{\tiny$\val_\g$}\downarrow &&\ \ \downarrow\mbox{\tiny$\val$}\\
I&\hra&I(D)
\end{array}
$$
and the image of $I$ inside $I(D)$ is $\val(\g)$. Thus, we want to find out for which cuts the equality $I(D)=\val(\g)$  holds.

From now on, we denote simply by $H=H(D)$ the invariance group of our cut $D$. 
Also, we denote by  $\overline{H}$ the convex subgroup of $\gd$ generated by $H$.

The arguments that follow are inspired in \cite[Sec. 3.7]{KuhlCuts}. 

\begin{lemma}\label{cutHD}
The cut in $I(D)$ associated to $\overline{H}$ in Lemma \ref{IConv} is $\left(\val(x+\g),\,\val(H)\right)$. 
\end{lemma}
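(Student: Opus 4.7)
The plan is to combine two observations: first, that the convex hull $\overline{H}$ of $H$ coincides, as a subset of $\gd$, with $H$ itself; second, an analysis of the archimedean class in $\gd$ of the elements of $\gd \setminus H$.

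For the first point, suppose $y = mx + b \in \overline{H}$, so that $|y| \le n|h|$ for some $h \in H$ and $n \in \N$. I would show $m = 0$. Assuming otherwise, WLOG $m > 0$, divisibility of $\g$ lets me form $A := (-n|h| - b)/m$ and $B := (n|h| - b)/m$ in $\g$, and the bound $|y| \le n|h|$ rewrites as $A \le x \le B$. Combined with $D^L < x < D^R$, this forces $A \in D^L$ and $B \in D^R$. But $B - A = 2n|h|/m \in H$, since a convex subgroup of a divisible ordered group is itself divisible. The invariance of $D$ under $H$ then gives $B = A + (B-A) \in D^L$, contradicting $B \in D^R$. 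Hence $m = 0$, and then $b \in H$ by convexity of $H$ in $\g$. This yields $\overline{H} = H$ as subsets of $\gd$, whence the right-hand coordinate $\val(\overline{H}) = \val(H)$ of the claimed cut is immediate.

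For the left-hand coordinate $\val(x+\g) = \val(\gd \setminus \overline{H})$, the inclusion $\subset$ is immediate because $\overline{H} \subset \g$ while $x + b \notin \g$ for any $b \in \g$. Conversely, given $y = mx + b \in \gd \setminus \overline{H}$, if $m \ne 0$ then divisibility of $\g$ gives $y = m(x + b/m)$; since multiplication by a nonzero integer preserves the convex subgroup generated in any torsion-free ordered group, $\val(y) = \val(x + b/m) \in \val(x + \g)$. The delicate case is $m = 0$: here $y = b \in \g \setminus H$, WLOG $b > 0$, and Lemma \ref{propertiesHD}(i) produces $d \in D^L$, $e \in D^R$ with $b = e - d$. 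Now $x - d$ and $e - x$ are positive elements of $x + \g$ whose sum is $b$, so one of them, say $\alpha$, satisfies $b/2 \le \alpha \le b$ (using divisibility of $\g$ to form $b/2$). The inequality $b \le 2\alpha$ together with $\alpha \le b$ shows that $\alpha$ and $b$ generate the same convex subgroup of $\gd$, so $\val(b) = \val(\alpha) \in \val(x + \g)$.

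The main obstacle is the $m = 0$ subcase above. A priori it is not obvious that an ``old'' value $\val(b)$ with $b \in \g \setminus H$ should be representable by an element of the new coset $x + \g$; the splitting $b = (x - d) + (e - x)$ is the key trick, and it relies essentially on the characterization $E - D = \g_{>0} \setminus H$ from Lemma \ref{propertiesHD}(i), which is precisely where the role of $H$ as the invariance group of $D$ enters.
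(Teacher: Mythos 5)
Your proof is correct, and its overall architecture matches the paper's: first rule out elements $mx+b$ with $m\ne 0$ from $\overline{H}$ by sandwiching $x$ between two elements of $\g$ that differ by an element of $H$ (contradicting $D+h=D$), then prove the two inclusions between $\val(x+\g)$ and $\val(\gd\setminus\overline{H})$, splitting into the cases $m\ne0$ and $m=0$ and invoking Lemma \ref{propertiesHD}(i) in the latter. The genuine divergence is in the final step of the $m=0$ case. The paper argues via the ultrametric inequality (VAL1): from $0<x-a<h$ it gets $\val(h)\le\val(x-a)$, and if the inequality is strict then $\val(x-b)=\val(x-a-h)=\val(h)$, so in either case $\val(h)\in\val(x+\g)$. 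You instead use a pigeonhole/halving argument: one of the two positive summands $x-d$, $e-x$ of $b$ must lie between $b/2$ and $b$, hence is archimedean equivalent to $b$. Both work; the paper's route uses only (VAL1) and no divisibility at this step, while yours leans on divisibility of $\g$ (harmless here, since Section \ref{subsecGD} assumes $\g=\gq$) and yields the slightly sharper statement $\overline{H}=H$ rather than just $\val(\overline{H})=\val(H)$. One cosmetic slip: $e-x$ is not an element of $x+\g$ but of $-x+\g$; you should add that $\val(e-x)=\val(x-e)$ with $x-e\in x+\g$, which is immediate since the natural valuation is invariant under negation.
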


\begin{proof}For $a\in\g$, suppose that $x-a\in  \overline{H}$. Then, there exists  $h\in H$ such that  $0<|x-a|<h$. If $a<x$, we deduce $x<a+h$, while for $a>x$ we deduce $a-h<x$. Both situations contradict the fact that $D+h=D$. 

Therefore, we have $x-a\not\in \overline{H}$ for all $a\in\g$. This implies two facts:

(1) $\val(H)=\val(\overline{H})$ is the final segment of the cut in $I(D)$ associated to $\overline{H}$.

(2) $\val(x+\g)<\val(\overline{H})$.

Hence, $\val(x+\g)\subset\val(\gd\setminus \overline{H})$ and we need only to prove the opposite inclusion. 

Take any positive $h\in \gd\setminus \overline{H}$. If $h=mx+c$ with $c\in \g$ and $m \ne0$, then 
$$\val(h)=\val(x+(c/m))\in\val(x+\g).$$ 

Now, suppose that $m=0$: that is, $h\in \g\setminus H$. By Lemma \ref{propertiesHD}(i), $h=b-a$ for some $a\in D^L$, $b\in D^R$.
From $a+h=b>x$ we deduce $0<x-a<h$, so that $\val(h)\le\val(x-a)$. If the inequality is strict, then the property (VAL1) of the natural valuation shows that 
$$\val(x-a-h)=\min\{\val(h),\val(x-a)\}=\val(h).
$$
In any case, we see that $\val(h)$ belongs to $\val(x+\g)$.
\end{proof}

\begin{lemma}\label{nonI=max}

Take $a\in\g$. Then, 
$$\val(x-a)\not\in\val(\g)\ \sii\ H(x-a)\cap\g =H\ \sii\ H(x-a)\cap\g \subset H.
$$

In this case, we have $\val(x-a)=\max\left(\val(x+\g)\right)$.
\end{lemma}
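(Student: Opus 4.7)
The plan is to establish the triple equivalence and then derive the maximality statement. First I would note the auxiliary containment $H\subseteq H(x-a)\cap\g$: assuming (by symmetry) $a\in D^L$, for any positive $h\in H$ the invariance $D^L+h=D^L$ gives $a+h\in D^L$, hence $a+h<x$, whence $0<h<x-a=|x-a|$ in $\gd$ and therefore $h\in H(x-a)$. This immediately makes $H(x-a)\cap\g=H$ equivalent to the a priori weaker $H(x-a)\cap\g\subset H$.

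For the forward direction of the first equivalence, I would argue the contrapositive: if $\val(x-a)=\val(b)$ for some nonzero $b\in\g$, then $b\in H(x-a)\cap\g$, and by Lemma \ref{cutHD} the final segment $\val(H)$ of the cut determined by $\overline{H}$ lies strictly above the initial segment $\val(x+\g)$, which contains $\val(x-a)$; hence $\val(b)\notin\val(H)$, so $b\notin H$. Thus $H(x-a)\cap\g\supsetneq H$.

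The substantive content is the reverse direction, and this is where divisibility of $\g$ enters. Given a positive $c\in (H(x-a)\cap\g)\setminus H$, the relation $c\in H(x-a)$ gives $|c|\le n|x-a|$ for some $n\in\N$; I would then replace $c$ by $c/n\in\g$ (still outside $H$ because $H$ is a subgroup), so that $\val(c)\ge\val(x-a)$. Lemma \ref{propertiesHD}(i) produces $d\in D^L$, $e\in D^R$ with $c=e-d$, giving $0<x-d<c$ and hence $\val(x-d)\ge\val(c)$. I would now decompose $x-a=(x-d)-(a-d)$ inside $\gd$, with $a-d\in\g$, and apply (VAL1). If $\val(x-d)=\val(x-a)$, the chain $\val(x-a)=\val(x-d)\ge\val(c)\ge\val(x-a)$ collapses to $\val(x-a)=\val(c)\in\val(\g)$. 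If instead $\val(x-d)>\val(x-a)$, then $\val(a-d)$ must differ from $\val(x-d)$ (otherwise (VAL1) yields $\val(x-a)\ge\val(x-d)$, a contradiction), and the equality case of (VAL1) forces $\val(x-a)=\val(a-d)\in\val(\g)$.

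For the maximality, suppose $\val(x-a)\notin\val(\g)$ and that some $b\in\g$ satisfies $\val(x-b)>\val(x-a)$. Then $(x-a)-(x-b)=b-a\in\g$, and the equality case of (VAL1) gives $\val(b-a)=\val(x-a)\in\val(\g)$, a contradiction. Hence $\val(x-a)=\max(\val(x+\g))$. The main obstacle is the reverse direction of the middle equivalence: without divisibility one cannot rescale $c$ into the range $|c|\le|x-a|$, and then the bound $\val(x-d)\ge\val(c)$ would not be strong enough to apply (VAL1) and identify $\val(x-a)$ as a value coming from $\g$.
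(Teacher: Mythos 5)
Your proof is correct, but it is organized differently from the paper's. The paper proves the maximality statement \emph{first} (by exactly the argument you give at the end: $\val(x-a)<\val(x-b)$ would force $\val(b-a)=\val(x-a)\in\val(\g)$ by (VAL1)), and then deduces the hard implication from it together with Lemma \ref{cutHD}: for every $h\in\g\setminus H$ one has $\val(h)\in\val(x+\g)$, hence $\val(h)\le\val(x-a)$ by maximality, hence $\val(h)<\val(x-a)$ because $\val(x-a)\notin\val(\g)$, hence $h\notin H(x-a)$. You instead prove the contrapositive of that implication by a self-contained computation: starting from a positive $c\in\left(H(x-a)\cap\g\right)\setminus H$, you write $c=e-d$ via Lemma \ref{propertiesHD}(i), decompose $x-a=(x-d)-(a-d)$, and run the two cases of (VAL1). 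This is in effect an inlined mirror image of the computation the paper delegates to the proof of Lemma \ref{cutHD} (there: $h=b-a$, $0<x-a<h$, then (VAL1)); both routes rest on the same two ingredients, so the difference is one of organization rather than substance. Your version has the mild advantage of not needing the maximality statement as an intermediate step; the remaining pieces (the containment $H\subset H(x-a)\cap\g$, the easy direction, the maximality) match the paper's.

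One correction of a side remark: the rescaling $c\mapsto c/n$ is superfluous, and your closing claim that divisibility is what makes the reverse direction work is not accurate. By the definition of the natural valuation, $\val(c)\ge\val(x-a)$ means precisely $H(c)\subseteq H(x-a)$, which already follows from $c\in H(x-a)$, i.e.\ from $|c|\le n|x-a|$ for \emph{some} $n$; you do not need $|c|\le|x-a|$ itself. So your argument goes through verbatim without dividing by $n$, and divisibility of $\g$ plays no special role in this lemma beyond being a standing hypothesis of Section \ref{subsecGD}.
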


\begin{proof}
By Lemma \ref{cutHD}, $\overline{H}\subsetneq H(x-a)$ for all $a\in\g$. Thus, $H\subset H(x-a)\cap\g$.

Suppose that $\val(x-a)\not\in \val(\g)$. 

If $\val(x-a)<\val(x-b)$ for some $b\in\g$, then $v(b-a)=\val(x-a)$ by (VAL1), contradicting our assumption. 
This shows that $\val(x-a)=\max\left(\val(x+\g)\right)$.

Now, for all $h\in\g\setminus H$, Lemma \ref{cutHD} shows that $\val(h)\in\val(x+\g)$. Hence,  $\val(h)<\val(x-a)$ and this implies $h\not\in H(x-a)\cap\g$. Thus, $H(x-a)\cap\g =H$.

On the other hand, if $\val(x-a)=\val(h)\in \val(\g)$, then $h\not\in H$ by Lemma \ref{cutHD}. Therefore, $H(x-a)\cap\g=H(h)\supsetneq H$. 
\end{proof}

The following result follows immediately from Lemmas \ref{cutHD} and \ref{nonI=max}.

\begin{theorem}\label{initSeg}
Let $H=H(D)$ be the invariance group of a cut $D$ in $\g$.\e

(1) \ If $\g\subset\gd$ preserves the rank, then $\val(x+\g)=\val(\g\setminus H)$.\e

(2) \ If $\g\subset\gd$ increases the rank, then, as ordered sets, $$\val(x+\g)=\val(\g\setminus H)+\{\val(x-a)\}$$  where $\val(x-a)$ is the unique principal convex subgroup of $\gd$ which is not generated by an element in $\g$.
In this case,  $H(x-a)\cap \g=H$.
\end{theorem}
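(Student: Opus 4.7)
The plan is to deduce the theorem by combining Lemma \ref{cutHD} and Lemma \ref{nonI=max}, together with the small-extension bound of Lemma \ref{small<=1}, which tells us that $\g\subset\gd$ increases the rank by at most one.

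The starting point is Lemma \ref{cutHD}, which identifies $\val(x+\g)$ with the initial segment $I(D)\setminus\val(H)$. Since $H$ is a convex subgroup of $\g$ and the induced map $\val(\g)\hookrightarrow I(D)$ is order-preserving, intersecting with $\val(\g)$ yields $\val(\g)\setminus\val(H)=\val(\g\setminus H)$. Thus $\val(\g\setminus H)\subseteq\val(x+\g)$ in all cases, and part (1) is immediate: when the rank is preserved, $I(D)=\val(\g)$ and this inclusion becomes an equality.

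For part (2), Lemma \ref{small<=1} furnishes a unique extra element $C\in I(D)\setminus\val(\g)$. To put $C$ into the announced form I would pick any positive generator $y\in\gd\setminus\g$ for the associated principal subgroup, write $y=mx+c$ with $m\in\Z\setminus\{0\}$ and $c\in\g$, and use divisibility of $\g$ to rewrite $C=\val(y)=\val(m(x-a))=\val(x-a)$ with $a=-c/m\in\g$. Lemma \ref{nonI=max} then supplies the remaining information: $H(x-a)\cap\g=H$ and $\val(x-a)=\max\val(x+\g)$. Running Lemma \ref{nonI=max} in the converse direction, every $a'\in\g$ with $\val(x-a')\notin\val(\g)$ must also satisfy $\val(x-a')=\max\val(x+\g)=\val(x-a)$, so the extra valuation is unambiguously defined. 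The decomposition $\val(x+\g)=\val(\g\setminus H)\cup\{\val(x-a)\}$ follows, and since $\val(x-a)$ sits strictly above $\val(\g\setminus H)$, this is the claimed isomorphism $\val(x+\g)=\val(\g\setminus H)+\{\val(x-a)\}$ of ordered sets.

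No step is a genuine obstacle, given the preparatory lemmas; the only subtleties are (i) exploiting divisibility of $\g$ to realize the new principal subgroup in the form $H(x-a)$ with $a\in\g$, and (ii) the well-definedness of this $\val(x-a)$ independently of the choice of $a$, both of which are absorbed cleanly into Lemma \ref{nonI=max}.
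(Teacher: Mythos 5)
Your proof is correct and takes essentially the same route as the paper, whose entire proof is the remark that the theorem ``follows immediately from Lemmas \ref{cutHD} and \ref{nonI=max}''; you simply make explicit the combination of the decomposition $I(D)=\val(x+\g)\sqcup\val(H)$ with the at-most-one-new-element bound of Lemma \ref{small<=1}. The one detail you spell out that the paper leaves tacit --- rewriting $\val(mx+c)$ as $\val(x-a)$ with $a=-c/m$ via divisibility of $\g$ so that Lemma \ref{nonI=max} applies --- is exactly the right way to fill that gap.
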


\begin{theorem}\label{ballsIncrease}
The extension $\g\subset\gd$ increases the rank if and only if $D$ is a ball cut.
\end{theorem}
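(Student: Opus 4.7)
The plan is to combine Theorem \ref{initSeg} with Lemma \ref{nonI=max} to reformulate the theorem as follows: writing $H = H(D)$, the extension $\g \subset \gd$ increases the rank if and only if there exists $a \in \g$ with $H(x-a) \cap \g = H$. It therefore suffices to show that such an $a$ exists precisely when $D$ has the form $(a+H)^+$ or $(a+H)^-$ for some $a \in \g$.

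For the ``only if'' direction, suppose $a \in \g$ satisfies $H(x-a) \cap \g = H$. By Lemma \ref{Gshift} the operations $D \mapsto -D$ and $D \mapsto D + c$ leave the $\g$-equivalence class of $\gd$ unchanged, and the last display of Section \ref{subsec6types} shows that they preserve the class of ball cuts as well; so I may assume without loss of generality that $x > a$, i.e., $x - a > 0$. I then verify $D = (a + H)^+$ by placing each element of $\g$ in $D^L$ or $D^R$. For positive $h \in H$, Lemma \ref{cutHD} yields $\val(x-a) < \val(h)$ in $I(D)$, equivalently $H(h) \subsetneq H(x-a)$, and hence $nh < x - a$ for every $n \in \N$; in particular $a + h < x$, so $a + h \in D^L$. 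For positive $h \in \g \setminus H$, the hypothesis forces $h \notin H(x-a)$, hence $n(x-a) < h$ for every $n$, and in particular $x - a < h$, placing $a + h \in D^R$. Negative $h$ fall into $D^L$ trivially from $a + h < a < x$. Comparing with the description of the left part of $(a+H)^+$ matches the two cuts.

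For the ``if'' direction, assume $D$ is a ball cut; Lemma \ref{Gshift} reduces us to $D = H^+$. Then $h < x$ for every $h \in H$ and $x < h$ for every positive $h \in \g \setminus H$. I verify $H(x) \cap \g = H$: the inclusion $H \subset H(x)$ is immediate from $0 < h < x$ for positive $h \in H$. Conversely, for positive $h \in \g \setminus H$ and any $n \in \N$, divisibility gives $h/n \in \g$ with $h/n \notin H$ (else $h = n \cdot h/n \in H$), so $h/n > H$ and thus $x < h/n$, i.e., $nx < h$, showing $h \notin H(x)$. By Lemma \ref{nonI=max} then $\val(x) \notin \val(\g)$, so by Theorem \ref{initSeg}(1) the rank must strictly increase. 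The main subtlety throughout is the bookkeeping between convex-subgroup relations in $\gd$ and pointwise inequalities in $\g$; divisibility enters essentially only in the converse, to produce $h/n \in \g \setminus H$ needed to upgrade $x < h$ to $nx < h$.
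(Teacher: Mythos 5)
Your proof is correct and follows essentially the same route as the paper: both directions hinge on Lemma \ref{nonI=max}'s characterization of rank increase via $H(x-a)\cap\g=H$, together with the normalizations provided by Lemma \ref{Gshift}. The only cosmetic difference is that in the rank-increasing direction you keep the shift $a$ and sort each $a+h$ into $D^L$ or $D^R$ directly, whereas the paper first normalizes to $a=0$, $x>0$ and then identifies $D^L$ with $H^+$.
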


\begin{proof}
Suppose that $D=\left(a+H\right)^+$ for some $a\in \g$. Then, $H=H(D)$. 
By Lemma \ref{Gshift}, in order to show that the extension $\g\subset\gd$ increases the rank, we may assume that $a=0$; that is,   
$$D^L=H^+:=\left\{d\in\g\mid d\le h\ \mbox{ for some }h\in H\right\}.$$

Let $x=x_D\in\gd$. By Lemma \ref{nonI=max}, it suffices to show that
$H(x)\cap \g\subset H$ to conclude that $\g\subset\gd$ increases the rank.
Now, a positive $h\in H(x)\cap \g$ satisfies $0< h<nx$ for some $n\in\N$. Hence, $h/n<x$, so that $h/n\in D^L=H^+$. This implies $0<h/n\le h'$ for some $h'\in H$. Thus, $h\in H$. 

By Lemma \ref{Gshift}, for $D=\left(a+H\right)^-=-(-a+H)^+$ the extension $\g\subset\gd$ increases the rank too.

Conversely, suppose that the extension $\g\subset\gd$ increases the rank. Let $x=x_D$ and $H=H(D)$. By Lemma \ref{nonI=max}, there exists $a\in\g$ such that $H(x-a)\cap \g= H$. By Lemma \ref{Gshift}, the $\g$-isomorphism between $\gd$ and $\g(D-a)$ maps $x-a$ to $x_{D-a}$; thus, by replacing $D$ with $D-a$, we may assume that  $H(x)\cap \g= H$. Also, by replacing $D$ with $-D$, if necessary, we may assume that $x>0$. Let us show that  $D=H^+$ under these assumptions.

Since $0\in D^L$ and $H$ is the invariance group of $D$, we have $H\subset D^L$. Thus, $H^+\subset D^L$.  Conversely, take any $d\in D^L$. If $d\le 0$, then $d\in H^+$ because $0\in H$. If $d>0$, then 
$0<d<x$ implies that $d$ belongs to $H(x)\cap \g= H$ too.
\end{proof}

\subsection{Classification of small extensions}\label{subsecClassif}

We keep assuming that $\g$ is a divisible group.
Let $\xx\ppr=\xx\ppr(\g)$ be the set of all pairs $(\La,x)$, where $\La$ is a proper small extension of $\g$ and $x\in\La$ is the choice of a generator of the infinite cyclic group $\La/\g$.

Every pair $(\La,x)\in\xx\ppr$ determines a cut $D_x$ of $\g$, whose initial segment is:
$$
(D_x)^L=\{a\in\g\mid \iota(a)<x\}\subset\g,
$$
where $\iota\colon \g\hk\La$ is the embedding of $\g$ into $\La$. This determines a pre-ordering on $\xx\ppr$, by defining $(\La,x)\le(\La',x')$ whenever $D_x\le D_{x'}$.  

We establish on $\xx\ppr$ the equivalence relation 
$$
(\La,x)\simeq(\La',x')
$$
if there exists an isomorphism of ordered groups between $\La$ and $\La'$ which maps $x$ to $x'$ and acts as the identity on $\g$.

The pre-ordering on $\xx\ppr$ induces a total ordering on the quotient set $\xx\ppr/\!\simeq$.

\begin{proposition}\label{X=cuts}
For a divisible group $\g$, any mapping
$$
\cuts(\g)\lra \xx\ppr,\qquad D\longmapsto \left(\gd,x_D\right)
$$
induces an isomorphism of ordered sets between $\cuts(\g)$ and  $\xx\ppr/\!\simeq$.
\end{proposition}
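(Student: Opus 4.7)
The plan is to produce an explicit candidate inverse $[(\La,x)]\mapsto D_x$ and verify that both maps descend to mutually inverse, monotone bijections between $\cuts(\g)$ and $\xx\ppr/\!\simeq$. Since the pre-ordering on $\xx\ppr$ is defined through the induced cuts, a monotone bijection between two totally ordered sets automatically yields an isomorphism of ordered sets.

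First I would verify the tautology that the cut induced by the canonical pair $(\gd,x_D)$ is $D$ itself. The subsets $D^L,D^R$ partition $\g$ (they are disjoint since $D^L<D^R$ and cover $\g$ by the definition of a cut), and $\gd$ is built so that $D^L<x_D<D^R$; hence $(D_{x_D})^L=\{a\in\g\mid a<x_D\}=D^L$. Monotonicity of $D\mapsto[(\gd,x_D)]$ is then immediate, as is the fact that $(\La,x)\mapsto D_x$ descends to a well-defined left inverse on $\xx\ppr/\!\simeq$.

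The substantive step is surjectivity, i.e.\ $(\La,x)\simeq(\g(D_x),x_{D_x})$ for every $(\La,x)\in\xx\ppr$. Because $\g$ is divisible and $\La$ is torsion-free (being ordered), the divisible closure $\Delta$ of $\g$ inside $\La$ already equals $\g$: if $nd\in\g$ with $n\in\N$, set $a:=(nd)/n\in\g$; then $n(d-a)=0$ forces $d=a\in\g$. By smallness, $\La/\g=\La/\Delta$ is therefore infinite cyclic, generated by the class of $x$, so every element of $\La$ has a unique expression $mx+b$ with $m\in\Z$, $b\in\g$. Writing $D:=D_x$, I would then define
$$
\varphi\colon \gd\lra\La,\qquad mx_D+b\longmapsto mx+b,
$$
which is manifestly a bijective group homomorphism fixing $\g$ pointwise and sending $x_D$ to $x$.

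The main obstacle is checking that $\varphi$ is order-preserving. Both orderings are translation-invariant, so it suffices to verify that $\varphi$ preserves positivity of $mx_D+b$; the case $m=0$ is trivial, and for $m\ne 0$ divisibility of $\g$ lets us divide by $m$. Positivity in $\gd$ is then controlled by whether $-b/m$ lies in $D^L$ or in $D^R$ (with the side determined by the sign of $m$), and because $D=D_x$ is defined by $D^L=\{c\in\g\mid c<x\text{ in }\La\}$, exactly the same condition characterizes positivity of $mx+b$ in $\La$. Hence $\varphi$ is an order-preserving isomorphism, establishing $(\La,x)\simeq(\gd,x_D)$ and completing the proof.
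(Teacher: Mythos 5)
Your proof is correct and takes essentially the same route as the paper's, which reduces the proposition to the two remarks that $\left(\gd,x_D\right)\simeq\left(\g(D'),x_{D'}\right)$ iff $D=D'$ and that $(\La,x)\simeq\left(\g(D_x),x_{D_x}\right)$ for every $(\La,x)\in\xx\ppr$. You merely supply the details the paper labels ``trivial'' — that the relative divisible closure of $\g$ in $\La$ is $\g$ itself, the explicit map $mx_D+b\mapsto mx+b$, and the positivity check — all of which are accurate.
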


\begin{proof}This follows from two trivial remarks.\e

(1) $\left(\gd,x_D\right)\simeq\left(\g(D'),x_{D'}\right)\sii D=D'$.\e

(2) For any pair $(\La,x)\in \xx\ppr$, we have $(\La,x)\simeq \left(\g(D_x),x_{D_x}\right)$.
\end{proof}

For any pair $\ep\in\{\pm1\}$, $a\in\g$, consider the bijective mapping
$$
\tea\colon \cuts(\g)\lra \cuts(\g),\qquad D\longmapsto \tea(D)=\ep D+a
$$

These mappings form a subgroup : 
$$\gpm:=\left\{\tea\mid\ep\in\{\pm1\},\ a\in\g\right\}$$ 
of the group of all bijective mappings from $\cuts(\g)$ onto itself, with the operation of composition. Since,
$$
\tea\circ t_{\ep',a'}=t_{\ep\ep',\ep a'+a},
$$
the group $\gpm$ is a semidirect product of $\g$ by $\{\pm 1\}$.

\begin{lemma}\label{groupAction}
Let $D,D'$ be two cuts in $\g$. The small extensions $\gd$, $\g(D')$ are $\g$-equivalent if and only if  $D'=\tea(D)$ for some  $\tea\in \gpm$.   
\end{lemma}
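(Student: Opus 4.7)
The plan is to treat the two implications separately. For sufficiency, assume $D' = \tea(D) = \ep D + a$ for some $\ep \in \{\pm 1\}$ and $a \in \g$. Lemma \ref{Gshift} already supplies the $\g$-equivalences $\gd \sim_\g \g(-D)$ and $\g(E) \sim_\g \g(E+a)$ for every cut $E$ and every $a \in \g$. Chaining these (choosing $E = \ep D$) gives $\gd \sim_\g \g(\ep D + a) = \g(D')$, so this direction is essentially a direct repackaging of Lemma \ref{Gshift}.

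For necessity, fix a $\g$-isomorphism $\varphi\colon \gd \iso \g(D')$. Since $\varphi$ restricts to the identity on $\g$, it induces an isomorphism of the quotients $\gd/\g \iso \g(D')/\g$. Both quotients are infinite cyclic by the very construction of $\gd$ and $\g(D')$, generated respectively by the classes of $x_D$ and $x_{D'}$. Hence $\varphi(x_D) + \g$ must be a generator of $\g(D')/\g$, which forces $\varphi(x_D) = \ep x_{D'} + a$ for some $\ep \in \{\pm 1\}$ and $a \in \g$. These two pieces of data are exactly what specify an element of $\gpm$.

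It then remains to translate the order-preserving property of $\varphi$ into a relation between $D$ and $D'$. For every $b \in \g$ one has $b \in D^L$ iff $b < x_D$ iff $\varphi(b) < \varphi(x_D)$ iff $b < \ep x_{D'} + a$. Unwinding the right-hand inequality in the case $\ep = 1$ reads $b - a < x_{D'}$, which identifies $D^L$ with $(D')^L + a$ and so gives $D' = t_{1,-a}(D)$; in the case $\ep = -1$ it reads $a - b > x_{D'}$, which identifies $D^L$ with $a - (D')^R$ and so gives $D' = t_{-1,a}(D)$. Either way $D' \in \gpm \cdot D$, as required. No serious difficulty is expected; the only point requiring care is the sign and shift bookkeeping in this last computation and the verification that the two generators $\pm x_{D'}$ exhaust the possibilities for $\varphi(x_D) \bmod \g$.
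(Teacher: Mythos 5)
Your proof is correct and follows essentially the same route as the paper: sufficiency via Lemma \ref{Gshift}, and necessity by observing that a $\g$-isomorphism must send $x_D$ to an element of the form $\ep x_{D'}+a$ because it carries a generator of one infinite cyclic quotient to a generator of the other. The only cosmetic difference is that the paper finishes by invoking Proposition \ref{X=cuts} to identify the cuts, whereas you unwind the inequalities $b<\ep x_{D'}+a$ by hand, which amounts to re-deriving the injectivity statement of that proposition; your sign and shift bookkeeping checks out.
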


\begin{proof}
If $D'=\tea(D)$ for some  $\tea\in \gpm$, then $\gd\sim_\g\g(D')$ by Lemma \ref{Gshift}.

Conversely, if $\varphi\colon \g(D')\iso\gd$ is a $\g$-isomorphism, then 
$$
\gen{\g,x_D}=\gd=\gen{\g,\varphi(x_{D'})},
$$
where $\gen{\g,x_D}$ is the subgroup generated by $\g$ and $x_D$.

Hence, there exist $\ep\in\{\pm1\}$, $a\in\g$ such that $\varphi(x_{D'})=\ep\,x_D+a$.
Since 
$$\left(\g(D'),x_{D'}\right)\simeq\left(\gd,\varphi(x_{D'})\right)=\left(\gd,\ep\,x_D+a\right)\simeq\left(\g(\ep D+a),x_{\ep D+a}\right),$$
Proposition \ref{X=cuts} shows that $D'=\ep D+a=\tea(D)$.
\end{proof}

Therefore, the $\g$-equivalence classes of small extensions of $\g$ are parametrized by
$$
\{\g\}\sqcup \left(\cuts(\g)/\g^\pm\right).
$$

\subsection*{The general case} For an arbitrary ordered abelian group $\g$, the set $\Sme(\g)$ of $\g$-equiva\-lence classes of small extensions of $\g$ may be stratified as
$$
\Sme(\g)=\bigsqcup_{\g\subset\d\subset\gq}\Sme_\d(\g),
$$
where $\d$ runs through all subgroups of $\gq$ containing $\g$, and $\Sme_\d(\g)$
is  the set of the $\g$-equivalence classes of  
small extensions for which the relative divisible closure of $\g$ is $\g$-isomorphic to $\d$. 

Let us show that $\Sme_\d(\g)$ admits a complete analogous description as $\Sme(\gq)$:
\begin{equation}\label{smeDelta}
\Sme_\d(\g)=\{\d\}\sqcup \left(\cuts(\gq)/\d^\pm\right).
\end{equation}

Indeed, let $\xx(\d)\ppr\subset\xx(\gq)\ppr$ be the subset of pairs $(\La,x)$ where $\La$ is a small extension of $\g$ such that the divisible closure of $\g$ in $\La$ is $\g$-isomorphic to $\d$. The equivalence relation $\simeq$ ``descends" to $\xx(\d)\ppr$; that is, if $(\La,x)\in\xx(\d)\ppr$, then the whole class of $(\La,x)$ is included in $\xx(\d)\ppr$.\e

\begin{proposition}\label{Xd=cuts}
For all $D\in\cuts(\gq)$, let $\d(D)$ be the subgroup of $\gq(D)$ ge\-ne\-rated by $\d$ and $x_D$.
Then, any  mapping$$
\cuts(\gq)\lra \xx(\d)\ppr,\qquad D\longmapsto \left(\d(D),x_D\right)
$$
induces an isomorphism of ordered sets between $\cuts(\gq)$ and  $\xx(\d)\ppr/\!\simeq$.
\end{proposition}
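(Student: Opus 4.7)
The plan is to adapt the proof of Proposition \ref{X=cuts}, which covered the divisible case, by establishing two analogous ``trivial remarks'': (1) that $(\d(D), x_D) \simeq (\d(D'), x_{D'})$ if and only if $D = D'$, and (2) that every $(\La, x) \in \xx(\d)\ppr$ is $\simeq$-equivalent to some $(\d(D), x_D)$. Order-preservation will then be immediate, since the cut in $\gq$ associated to $(\d(D), x_D)$ is $D$ itself. Before attacking these, I would check well-definedness: $\d(D) = \d + x_D\Z \subset \gq(D)$ inherits a total ordering making it a proper small extension of $\g$, and since $x_D \notin \gq$ the quotient $\d(D)/\d$ is infinite cyclic and the relative divisible closure of $\g$ in $\d(D)$ equals $\d$.

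For remark (1), the crucial observation is that the ordered group structure on $\d(D)$ recovers the full cut $D$ in $\gq$, not merely its restriction to $\d$. Given an isomorphism of ordered groups $\varphi\colon \d(D) \iso \d(D')$ which is the identity on $\g$ and sends $x_D$ to $x_{D'}$, the fact that $\varphi$ must preserve the relative divisible closure of $\g$, combined with uniqueness of division by positive integers in a torsion-free group, forces $\varphi$ to be the identity on $\d$. Then for any $r = a/n \in \gq$ (with $a \in \g$, $n \in \N$), the sign of $a - nx_D$ inside $\d(D)$ encodes whether $r \in D^L$ or $r \in D^R$; since $\varphi$ preserves this sign and fixes $a$ and $n$, one obtains $D^L = D'^L$, hence $D = D'$.

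For remark (2), given $(\La, x) \in \xx(\d)\ppr$, let $\d' \subset \La$ denote the relative divisible closure of $\g$ in $\La$; fix a $\g$-isomorphism $\d' \iso \d$, and embed $\La$ into its divisible hull $\La_\Q$. The divisible hull of $\d'$ inside $\La_\Q$ identifies with $\gq$, so the subgroup $\tilde{\La} = \gq + x\Z \subset \La_\Q$ is a proper small extension of $\gq$ (one has $x \notin \gq$, for otherwise $x \in \La \cap \gq = \d'$), with $\tilde{\La}/\gq$ infinite cyclic generated by $x$. Applying Proposition \ref{X=cuts} to the divisible group $\gq$ yields a cut $D \in \cuts(\gq)$ and a $\gq$-isomorphism $\tilde{\La} \iso \gq(D)$ sending $x$ to $x_D$; restricting this isomorphism to $\d + x\Z = \La$ produces the required equivalence $(\La, x) \simeq (\d(D), x_D)$.

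The main subtlety lies in remark (1): one might a priori fear that $\d(D)$ depends only on the restricted cut $D \cap \d$, which would destroy injectivity whenever $\d$ fails to be dense in $\gq$. The ``small denominator'' device---using comparisons of $nx_D$ against elements of $\g$ to probe arbitrary intermediate values $a/n \in \gq$---is exactly what restores full fidelity to cuts in $\gq$ and makes the correspondence bijective.
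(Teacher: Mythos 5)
Your proposal is correct and follows essentially the same route as the paper: injectivity is obtained by forcing the isomorphism to be the identity on $\d$ (via torsion of $\d/\g$ and torsion-freeness) and then probing the cut in $\gq$ by clearing denominators, while surjectivity comes from extending the ordering on $x\Z\oplus\d$ to $x\Z\oplus\gq$. The only cosmetic difference is that you obtain this extension by passing through the divisible hull $\La_\Q$ and citing Proposition \ref{X=cuts}, whereas the paper constructs the (unique) ordering on $x\Z\oplus\gq$ directly.
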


\begin{proof}
 
This follows from two remarks.\e

(1) $\left(\d(D),x_D\right)\simeq\left(\d(D'),x_{D'}\right)\sii D=D'$.\e

(2) For all $(\La,x)\in \xx(\d)\ppr$, we have $(\La,x)\simeq \left(\d(D),x_{D}\right)$ for some $D\in\op{Cuts}(\gq)$.\e

Let us denote $x=x_D$, $x'=x_{D'}$. We have $\d(D)=x\Z\oplus\d$, $\d(D')=x'\Z\oplus\d$. Suppose there exists an order preserving isomorphism
$j\colon \d(D)\iso\d(D')$ acting as the identity on $\g$, and such that $j(x)=x'$. Since $x$ and $x'$ have no torsion over $\d$, we have necessarily $j(\d)=\d$. Since $\d/\g$ is a torsion group, the only ordered $\g$-automorphism of $\d$ is the identity. Hence, $j$ acts as the identity on $\d$. Now, for all $a\in\gq$, there exists $n\in \N$ such that $na\in\d$, so that
$$
a<x\ \sii\ na<nx\ \sii\ na=j(na)<j(nx)=nx'\ \sii\ a<x'.
$$
Therefore, $D=D'$. This proves (1).

Let us prove (2). For all $(\La,x)\in \xx(\d)\ppr$, there is an isomorphism $\La\simeq x\Z\oplus\d$ which determines an ordering on the latter group. Now, the group $x\Z\oplus\gq$ admits a unique ordering for which the natural inclusion 
$x\Z\oplus\d\subset x\Z\oplus\gq$ is an embedding of ordered groups. Indeed, for any $a,b\in\gq$ and integers $m>m'$, we have:
$$
mx+a< m'x+b\sii x<(b-a)/(m-m')\sii nx<n(b-a)/(m-m'), 
$$
where $n\in\N$ satisfies $n(b-a)/(m-m')\in \d$.
Thus, $x$ determines a cut $D_x$ in $\gq$ and clearly $(\La,x)\simeq \left(\d(D_x),x_{D_x}\right)$.
\end{proof}

A result completely analogous to Lemma \ref{groupAction}, shows that the $\g$-equivalence classes of incommensurable small extensions such that the divisible closure of $\g$ is isomorphic to $\d$ are parametrized by the set $\,\cuts(\gq)/\d^\pm$. This proves (\ref{smeDelta}).

\section{Embedding small extensions in real vector spaces}\label{secRealVS}

By using Hahn's embedding theorem, our ordered abelian group $\g$ may be embedded in a suitable real vector space \cite[Sec. A]{Rib}. In this section, we recall this construction and we find a concrete real vector space which is sufficiently large to contain as well all small extensions of $\g$, up to $\g$-equivalence.

We give two applications of these real models for small extensions. 
On one hand, these models contain realizations of all cuts in the divisible hull $\gq$ of $\g$ in a real vector space. These realizations facilitate the computation of the cofinality and coinitiality of all these cuts (Sections \ref{subsubCofNB} and \ref{subsubCofB}).

On the other hand, in Section  \ref{secVals}, we shall derive another application of these real models to the problem  of classifying valuations on polynomial rings, up to equivalence.

\subsection{Embedding rank-preserving small extensions in real vector spaces}\label{subsecSameRank}

For our ordered group $\g$ with skeleton $\left(I;(C_i)_{i\in I}\right)$, the skeleton of $\gq$ is
$$
\left(I;(Q_i)_{i\in I}\right),\qquad Q_i=C_i\otimes_\Z\Q\ \mbox{ for all }i\in I.
$$

By Lemma \ref{small<=1}, we have a natural identification:
$$I=\pcv(\g)=\pcv(\gq).$$

By Hahn's Theorem  \ref{hahn}, there is a (non-canonical) immediate $\Q$-linear embedding 
$$
\gq\hooklongrightarrow\hq:=\lower.5ex\hbox{\mbox{\Large$\mathbf{H}$}}{\lower1.3ex\hbox{\mbox{\tiny${i\!\!\in\!\! I}$}}}\,Q_i.
$$


For each $i\in I$ we fix, once and for all, a positive element $1^i\in Q_i$. 
As shown in Proposition  \ref{rk1}, this choice determines an embedding $Q_i\hk\R$ of ordered groups, which sends our fixed element $1^i$ to the real number $1$. 
In this way, we get an embedding $\hq \hk \rlex$ which obviously preserves the rank.

Altogether, we obtain a rank-preserving extension
$$
\g\ \hra\ \gq\ \hra\ \hq\ \hra\ \rlex,
$$
which is maximal among all rank-preserving extensions of $\g$ \cite[Sec. A]{Rib}.

\begin{theorem}\label{MaxEqRk}
For any rank-preserving extension $\g\hk \La$, there exists an embedding $\La\hk\rlex$ fitting into a commutative diagram
$$
\as{1.2}
\begin{array}{ccc}
\La&&\\
\uparrow&\searrow&\\
\g&\lra&\rlex
\end{array}
$$
\end{theorem}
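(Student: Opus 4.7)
The plan is to reduce to the case where $\La$ is divisible, to extend the fixed coordinate embeddings $Q_i\hra\R$ of the components of $\gq$ to embeddings $Q'_i\hra\R$ of the components of $\La$, and then to lift the fixed Hahn embedding of $\gq$ to a Hahn embedding of $\La$ by means of a relative version of Banaschewski's lemma.

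First, I would replace $\La$ by its divisible hull $\La_\Q$ (cf.\ Lemma \ref{MaxComm}); any embedding of $\La_\Q$ into $\rlex$ will then restrict to one of $\La$. The extension $\gq\subset\La_\Q$ remains rank-preserving, because $\pcv(\La)\to\pcv(\La_\Q)$ and $\pcv(\g)\to\pcv(\gq)$ are bijections (an element and any nonzero integer multiple generate the same principal convex subgroup), and $\pcv(\g)\hra\pcv(\La)$ is a bijection by hypothesis. Hence we may and do assume that $\La$ is divisible. Writing the skeleton of $\La$ as $(I,(Q'_i)_{i\in I})$ with $I=\pcv(\g)=\pcv(\La)$, Lemma \ref{rkEmbed} supplies order-preserving embeddings $Q_i\hra Q'_i$. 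Since each $Q'_i$ has rank one and contains the fixed positive element $1^i\in Q_i$, Proposition \ref{rk1} produces a unique embedding $Q'_i\hra\R$ sending $1^i$ to $1$; by the same uniqueness, its restriction to $Q_i$ coincides with the fixed embedding $Q_i\hra\R$.

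The heart of the argument is to construct a Hahn-type embedding $\varphi'\colon\La\hra\mathbf{H}(\La)$ whose restriction to $\gq$ equals the composition of the fixed Hahn embedding $\gq\hra\hq$ with the coordinate-wise embedding $\hq\hra\mathbf{H}(\La)$ induced by $Q_i\hra Q'_i$. Recall that the fixed embedding of $\gq$ was obtained from a Banaschewski system of $\Q$-linear complements $(\ )\cmpl$ on $\cv(\gq)$. I would apply Zorn's lemma to the poset of pairs $(\ss,(\ )\cmpl)$, where $\ss\subset\cv(\La)$ and $(\ )\cmpl\colon\ss\to\op{Subsp}(\La)$ satisfies Banaschewski's two axioms on $\ss$ together with the compatibility
\[
\bar H\cmpl\cap\gq=H\cmpl,\qquad H:=\bar H\cap\gq,
\]
using the prescribed data on $\cv(\gq)$ as the seed. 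A maximal element must be defined on all of $\cv(\La)$: given an untreated $\bar H\in\cv(\La)$, set $H=\bar H\cap\gq$ and observe that $H\cmpl\cap\bar H=H\cmpl\cap H=0$, so one can choose a $\Q$-linear complement of $\bar H$ in $\La$ containing $H\cmpl$; a short dimension count inside $\gq=H\oplus H\cmpl$ then yields $\bar H\cmpl\cap\gq=H\cmpl$ automatically.

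Composing $\varphi'$ with the coordinate maps $Q'_i\hra\R$ produces the desired embedding $\La\hra\rlex$. Commutativity of the triangle is then immediate, since on $\gq$ the map $\varphi'$ reduces to the fixed Hahn embedding of $\gq$ and on each component $Q_i$ the composite $Q_i\hra Q'_i\hra\R$ coincides with the fixed $Q_i\hra\R$. The main technical obstacle will be the Zorn step: one has to simultaneously preserve Banaschewski's coherence axiom (ii) throughout $\cv(\La)$ while extending the prescribed complements on $\gq$, so that in the inductive step the new complement for $\bar H$ must be selected to lie between the already-chosen complements of its predecessors and successors (inside the totally ordered set $\cv(\La)$) and still contain $H\cmpl$.
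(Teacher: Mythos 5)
The paper does not actually prove Theorem \ref{MaxEqRk} --- it is quoted from \cite[Sec.\ A]{Rib} --- so your argument must stand on its own. Your reductions are sound: passing to divisible hulls, identifying $\pcv(\gq)=\pcv(\La_\Q)=I$, embedding each component $Q'_i$ into $\R$ compatibly with the fixed $Q_i\hra\R$ via the normalization $1^i\mapsto 1$, and the observation that a complement $\bar H\cmpl$ of $\bar H$ in $\La$ containing $H\cmpl$ automatically meets $\gq$ in exactly $H\cmpl$ (your ``dimension count'' is really a two-line uniqueness-of-decomposition argument, but it is correct). The final assembly is also right: a coherent system of complements on $\cv(\La)$ compatible with the prescribed one on $\cv(\gq)$ does force the coordinate projections $\La\to Q'_i$ to restrict on $\gq$ to the fixed ones, whence the triangle commutes.

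The gap is exactly where you locate it, and it is genuine rather than cosmetic: the Zorn argument does not work as stated. In your poset of partial coherent systems $(\ss,(\ \,)\cmpl)$, a maximal element need \emph{not} have $\ss=\cv(\La)$. The one-step extension requires not only that $\bigl(\bigcup_{\bar W\supsetneq\bar H}\bar W\cmpl\bigr)+H\cmpl$ be independent from $\bar H$ (this part can be checked using compatibility and the coherence of the system on $\gq$), but also that $\bar H+\bigcap_{\bar U\subsetneq\bar H}\bar U\cmpl=\La$, and this can fail: for an increasing chain $U_1\subset U_2\subset\cdots$ of subspaces with union $W\ne V$, one can choose genuine complements $U_n\cmpl$ forming a decreasing coherent chain whose intersection is $\{0\}$ (a ``pseudo-convergent'' choice of complements), and then no complement of $W$ sits below all the $U_n\cmpl$. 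So ``maximal implies total'' is false for this poset; this is precisely why Banaschewski's lemma itself is not proved by adding one subspace at a time. To close the gap you must either establish a genuinely relative Banaschewski statement (e.g.\ by producing a basis of $\La$ adapted simultaneously to all of $\cv(\La)$ and extending an adapted basis of $\gq$), or switch to the standard route: take any Hahn embedding of $\La_\Q$ and use the fact that $\rlex$ is spherically complete as a valued $\Q$-vector space, so that every value- and component-compatible embedding $\gq\hra\rlex$ extends along $\gq\subset\La_\Q$ by a pseudo-Cauchy-sequence argument. Either option is a substantive piece of work that your sketch currently replaces by an acknowledged ``technical obstacle''.
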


\noindent{\bf Caution!} The embedding $\La\hk\rlex$ is not unique. Every rank-preserving extension of $\g$ is $\g$-equivalent to some subgroup of $\rlex$, but not to a unique one.  \e

\emph{From now on, we identify $\g$, $\gq$ and $\hq$ with their image in $\rlex$. }\e

As we saw in Section \ref{subsecImmed}, the convex subgroups of $\rlex$ are given by:
$$
H_S=\left\{x=(x_i)_{i\in I}\in\rlex\mid x_i=0\ \mbox{for all }i\in S\right\},\qquad S\in\inii.
$$
The convex subgroups of $\gq$ are obtained as $H_S\cap\gq$ for $S\in\inii$.

For all $S\in\inii$, consider the truncation by $S$:
$$
\pi_S\colon \rlex\lra\rlex,\qquad x=(x_i)_{i\in I}\longmapsto x_S=(y_i)_{i\in I},
$$
where $y_i=x_i$ for all $i\in S$ and $y_i=0$ otherwise. Note that $\pi_S^{-1}(x_S)=x+H_S$.

By Lemma \ref{hahnSpecial}, $\gq$ contains the Hahn sum $\bigoplus_{i\in I}Q_i$ and we have
\begin{equation}\label{gqgq}
a\in\gq\ \imp\ a_S\in \gq, \quad\mbox{\,for all }S\in\inii. 
\end{equation}

\noindent{\bf Notation. }For all $i\in I$, consider unit vectors $e_i=(x_j)_{j\in I}\in\rlex$, with zero coordinates everywhere except for $x_i=1$. For an arbitrary real number $\xi\in \R$, the vector $\xi e_i\in \rlex$ has an obvious meaning. Clearly, $\xi e_i\in\gq$ if and only if $\xi\in Q_i\subset\R$.


\subsubsection{A real model for the set of non-ball cuts in $\gq$}

All $x\in\rlex\setminus\gq$ determine a cut $D_x\in\cuts(\gq)$, with initial and final segments:
$$
(D_x)^L=\left\{a\in\gq\mid a<x\right\},\qquad
(D_x)^R=\left\{a\in\gq\mid a>x\right\}.
$$
The subgroup $\gen{\gq,x}\subset\rlex$, generated by $\gq$ and $x$, is obviously $\g$-equivalent to the group $\gq(D_x)$ constructed in Section \ref{subsecGD}.
Since the extension $\g\subset \gen{\gq,x}\subset\rlex$ preserves the rank,  
Theorem \ref{ballsIncrease} shows that all these $D_x$ are non-ball cuts.

Conversely, for each non-ball cut $D\in\cuts(\gq)$, Theorems \ref{ballsIncrease} and \ref{MaxEqRk} show that  the incommensurable small extension $\gq(D)$ is $\g$-equivalent to some subgroup of $\rlex$.  In particular, $D$ is realized by some elements in $\rlex\setminus \gq$. 

The aim of this section is to find a subset in $\rlex$ which serves as a  model for the set of non-ball cuts in $\gq$. More precisely, we shall construct a canonical subset $\gnbc\subset\rlex\setminus\gq$ such that the following mapping is an isomorphism of ordered sets:
$$
\gnbc\lra\left\{\mbox{non-ball cuts in }\gq \right\},\qquad x\longmapsto D_x.
$$

In particular, the set $\gnbc$ must be a set of representatives of the following equivalence relation on $\rlex\setminus \gq$:
$$
x\sim_\cut x'\ \sii\ D_x=D_{x'}.
$$
Let us denote by $\cutcl{x}$ the class of $x$. 

\begin{lemma}\label{class=HS}
For $x\in\rlex\setminus\gq$, let $H\subset \rlex$ be the convex subgroup generated by the invariance group of the cut $D_x$ in $\gq$. Then, $\cutcl{x}=x+H\subset \rlex\setminus\gq$.
\end{lemma}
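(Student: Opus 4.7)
Let $H_0:=H(D_x)$. By Lemma~\ref{IConv}, there is a unique $S\in\inii$ with $H_0=H_S\cap\gq$; since $\gq\subset\rlex$ preserves the rank, the convex subgroup of $\rlex$ generated by $H_0$ is $H_S$, which is the $H$ of the statement. The essential structural input, already noted in the excerpt, is that $D_x$ is a non-ball cut, so the quotient cut $D_x/H_0$ in $\gq/H_0$ is non-principal. I would prove three claims that together yield the lemma: (a)~$(x+H)\cap\gq=\emptyset$; (b)~$D_{x+h}=D_x$ for every $h\in H$; and (c)~if $x'\in\rlex\setminus\gq$ satisfies $D_{x'}=D_x$, then $x'-x\in H$.

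For (a), I would pass to $\pi\colon\rlex\twoheadrightarrow\rlex/H$; the restriction to $\gq$ has kernel $H_0$, yielding an embedding $\gq/H_0\hookrightarrow\rlex/H$. A direct check from $a<x\Rightarrow a-x\le 0\in H$ shows that $\bar x:=\pi(x)$ realizes the cut $D_x/H_0$ in $\gq/H_0$. If $x+h=b\in\gq$ for some $h\in H$, then $\bar x=\bar b$ would itself lie in $\gq/H_0$; being a realization of the cut that also belongs to the ambient group, $\bar b$ must be the maximum of $D_x^L/H_0$ or the minimum of $D_x^R/H_0$, making $D_x/H_0$ principal and contradicting non-ballness.

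For (b), given $h\in H$, I would pick $h'\in H_0$ with $|h|\le h'$ (available because $H$ is the convex hull of the group $H_0$ in $\rlex$). The invariance $h'\in H(D_x)$ reads, for every $a\in\gq$, $a<x\iff a<x\pm h'$; taking negations and using $x\notin\gq$ (so $a\ne x$ and $a\ne x+h'$), this also gives $a>x\iff a>x+h'$. Sandwiching $x-h'\le x+h\le x+h'$ and splitting on the sign of $h$ yields $a<x\iff a<x+h$: the chain $a<x\Rightarrow a<x-h'\le x+h$ handles one direction, and for the other with $h>0$, the assumption $a\ge x$ forces $a>x+h'\ge x+h$, contradicting $a<x+h$.

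For (c), set $h:=x'-x$ and assume $h\notin H$, WLOG $h>0$. I would construct $a\in\gq$ with $x<a<x+h$, which immediately contradicts $D_{x'}=D_x$. When $h\in\gq$, one has $h\in\gq_{>0}\setminus H_0$, and Lemma~\ref{propertiesHD}(i) at once supplies $a_0\in D_x^L$ with $b_0:=a_0+h\in D_x^R$; taking $a=b_0$ works since $a_0<x$ forces $a<x+h$. The delicate subcase is $h\notin\gq$: because $h\notin H_S$, the leading index $\val(h)$ lies in $S=\val(\gq\setminus H_0)$, so I pick $g\in\gq_{>0}\setminus H_0$ archimedean equivalent to $h$; divisibility of $\gq$ furnishes $g/n\in\gq$ with $0<g/n\le h$, and the inequality is strict because $h\notin\gq$. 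Since $\val(g/n)=\val(g)\in S$, we have $g/n\notin H_0$, and applying Lemma~\ref{propertiesHD}(i) to $g/n$ produces the desired $a$. The main obstacle is precisely this last subcase: it is the only step that genuinely uses both divisibility of $\gq$ and the combinatorial fact $\val(h)\in S$, whereas everything else is a formal manipulation of the invariance relation through the quotient $\rlex/H$.
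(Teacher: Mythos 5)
Your proof is correct, and its overall skeleton---the two inclusions, a sandwich by an element of $H(D_x)$ dominating $h$ for one direction, and Lemma~\ref{propertiesHD}(i) for the other---matches the paper's. Two local differences are worth recording. First, for the fact that $(x+H)\cap\gq=\emptyset$ the paper does not need non-ballness: the very sandwich $(D_x)^L< x-h'\le x+h\le x+h'<(D_x)^R$ that proves $D_{x+h}=D_x$ already places $x+h$ strictly between the two halves of a partition of $\gq$, so $x+h\notin\gq$ comes for free; your detour through the quotient $\rlex/H$ and the non-ball property of $D_x$ (hence Theorem~\ref{ballsIncrease}) is valid but imports a heavier result than necessary. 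Second, for the reverse inclusion the paper argues coordinatewise in the Hahn product: if $D_x=D_y$ but $x_S\ne y_S$, it takes the minimal index $i\in S$ where the coordinates differ, inserts $b=qe_i\in\gq\setminus H$ with $x_i<x_i+q<y_i$, and derives the contradiction from Lemma~\ref{propertiesHD}(i). Your version replaces this with the archimedean class of $h=y-x$: from $\val(h)\in S$ you produce $g/n\in\gq_{>0}\setminus H(D_x)$ with $g/n\le h$ and then proceed identically. Both routes manufacture a positive element of $\gq\setminus H(D_x)$ fitting under the difference of the two realizations, so they are essentially interchangeable; yours is slightly more valuation-theoretic and handles the two cases $h\in\gq$, $h\notin\gq$ separately, while the paper's is a single uniform argument that is more explicit about where in $\rlex$ the two realizations first diverge.
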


\begin{proof}
For all $h_0\in H\cap\gq$, since $D_x+h_0=D_x$, we deduce $D_x=D_{x+h_0}$. On the other hand, all positive $h\in H$ satisfy $0< h \le h_0$ for some $h_0\in H\cap \gq$. Hence,
$$
(D_x)^L<x<x+h\le x+h_0<(D_x)^R,\qquad (D_x)^L<x-h_0\le x-h<x<(D_x)^R.
$$
Therefore, for all $h\in H$ we have $x+h\not\in \gq$ and $D_{x+h}=D_x$. Thus, $x+H\subset\cutcl{x}$.

To prove the converse inclusion, suppose that $y\in\cutcl{x}$; or equivalently, $D_x=D_{y}$. Let $S\in\inii$ be determined by $H=H_S$. We want to see that $y\in x+H_S$; that is, $x_S=y_S$. 

Suppose that $x_S<y_S$ and let $i\in S$ be the minimal index for which $x_i<y_i$. Take any positive $q\in Q_i$ such that $x_i<x_i+q<y_i$. Consider the element $b=q e_i\in\gq$. Since $q\ne0$ and $i\in S$, we have $b\not \in H$; hence, there exists $a\in (D_x)^L$ such that $a+b\not \in (D_x)^L=(D_y)^L$. In other words, $a+b>y$. However, from $a<x$ we deduce, by the construction of $b$, that $a+b<x+b<y$. This is a contradiction.  
\end{proof}

Let us compute the invariance group of the cut $D_x$ for all $x\in\rlex\setminus\gq$.

\begin{lemma}\label{MinExists}
For all $x\in\rlex\setminus\gq$ consider the set $\ss(x)=\left\{T\in\inii\mid x_T \not\in\gq\right\}$.
Then, the invariance group of $D_x$ is $H_S\cap \gq$, where $S=\min(\ss(x))$. 
\end{lemma}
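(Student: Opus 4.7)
The plan is to split the lemma into three claims and treat them in order: (i) $\ss(x)$ is a non-empty upward-closed subset of $\inii$, hence a final segment; (ii) $\min(\ss(x))$ exists; (iii) this minimum $S$ satisfies $H(D_x)=H_S\cap\gq$.

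Claim (i) is immediate from $I\in\ss(x)$ together with the identity $x_T=(x_{T'})_T$ for $T\subseteq T'$ in $\inii$ and the implication \eqref{gqgq}. For claim (ii), set $S_0=\bigcap_{T\in\ss(x)}T$, an initial segment, and argue by contradiction that $x_{S_0}\notin\gq$. The key input is that the support $W=\supp(x)$ is well-ordered in $I$, since $x$ lies in the Hahn product $\rlex$. If $x_{S_0}\in\gq$, then $W\cap S_0^c\neq\emptyset$ (else $x=x_{S_0}\in\gq$), so $w_0:=\min(W\cap S_0^c)$ exists; picking $T^*\in\ss(x)$ with $w_0\notin T^*$, the fact that $T^*$ is an initial segment forces $T^*\subseteq I_{<w_0}$, whence $W\cap T^*\subseteq W\cap I_{<w_0}\subseteq S_0$ by minimality of $w_0$. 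Combined with $S_0\subseteq T^*$, this yields $x_{T^*}=x_{T^*\cap S_0}=x_{S_0}\in\gq$, contradicting $T^*\in\ss(x)$.

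For claim (iii), write $S=\min(\ss(x))$, so $x_S\notin\gq$, and argue by double inclusion in terms of the natural valuation $\val:=\val_\rlex$. For any $a\in\gq$, the truncation $a_S$ lies in $\gq$ while $x_S$ does not, so $a_S\neq x_S$, and the initial-segment structure of $S$ then forces $\val(x-a)\in S$. An element $h\in H_S\cap\gq$ has support in $S^c$, so $\val(h)\in S^c$; since $S<S^c$, property (VAL1) in $\rlex$ shows that $(x+h)-a$ has the same leading coordinate and sign as $x-a$ for every $a\in\gq$. Hence $D_{x+h}=D_x$, giving $H_S\cap\gq\subseteq H(D_x)$.

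The reverse inclusion requires producing, for any $h\in\gq$ with $h_S\neq 0$ (WLOG $h>0$), an element $a\in\gq$ strictly between $x$ and $x+h$. Set $i^*=\val(h_S)\in S$; the inequality $S<S^c$ yields $\val(h)=i^*$ and $h_{i^*}>0$. Since $I_{<i^*}\subsetneq S$, minimality of $S$ gives $x_{I_{<i^*}}\in\gq$; using that $Q_{i^*}$ is a nontrivial divisible subgroup of $\R$, hence dense, pick $c\in Q_{i^*}$ with $x_{i^*}<c<x_{i^*}+h_{i^*}$ and set $a=x_{I_{<i^*}}+c\cdot e_{i^*}\in\gq$. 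A direct coordinate comparison, using that $a$ and $x$ coincide on $I_{<i^*}$ while $h$ vanishes there, gives $x<a<x+h$, so $a\in D_{x+h}^L\setminus D_x^L$ and $h\notin H(D_x)$. The only genuinely non-routine step is claim (ii): the well-ordering of $\supp(x)$ in the Hahn product is precisely what rules out a final segment $\ss(x)$ with empty-intersection descending chains of members.
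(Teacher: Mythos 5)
Your proof is correct, but it is organized quite differently from the paper's. The paper defines $S$ from the outset by the condition $H(D_x)=H_S\cap\gq$ (the invariance group is a convex subgroup of $\gq$, hence of this form) and then invokes Lemma \ref{class=HS} twice: once to see that $x_S\in x+H_S=\cutcl{x}\subset\rlex\setminus\gq$, so $S\in\ss(x)$, and once to see that any $T\in\ss(x)$ satisfies $x+H_T\subset\cutcl{x}=x+H_S$, hence $T\supset S$; the existence of $\min(\ss(x))$ then falls out as a byproduct. You avoid Lemma \ref{class=HS} entirely: you first prove existence of the minimum by intersecting $\ss(x)$ and exploiting the well-ordering of $\supp(x)$ in the Hahn product (a step the paper never needs to make explicit), and you then verify the two inclusions $H_S\cap\gq\subset H(D_x)$ and $H(D_x)\subset H_S\cap\gq$ by hand. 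Your reverse inclusion --- constructing $a=x_{I_{<i^*}}+c\,e_{i^*}\in\gq$ with $x<a<x+h$ via density of $Q_{i^*}$ in $\R$ --- is in substance a re-derivation of the hard direction of Lemma \ref{class=HS}, where the paper produces $b=qe_i\in\gq$ separating $x$ from $y$; your forward inclusion via (VAL1) corresponds to its easy direction. What your route buys is self-containedness and an explicit, constructive proof that $\min(\ss(x))$ exists; what the paper's route buys is brevity, since the separation argument is done once in Lemma \ref{class=HS} and reused. All the individual steps in your argument check out (in particular, $\val(x-a)\in S$ for every $a\in\gq$, $\val(h)=i^*\in S$ for $h\notin H_S$, and the inclusion $W\cap T^*\subset S_0$ in your claim (ii)).
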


\begin{proof}
Let  $S\in\inii$ be determined by $H(D_x)=H_S\cap\gq$. By Lemma \ref{class=HS},  $x_S\not\in\gq$, so that $S\in \ss(x)$. Let us show that $S$ is the minimal element in this set. 

Let $T\in\inii$ such that $x_T\not\in\gq$. Then, $D_x=D_y$ for all $y\in x+H_T$. Indeed, for all $a\in\gq$,  the equality $a_T=x_T$ is impossible by (\ref{gqgq}). Hence,
$$
a<y\sii a_T<y_T=x_T\sii a<x.
$$
Thus, $y\sim_\cut x$ and Lemma \ref{class=HS} shows that $x+H_T\subset x+H_S$. This implies $H_T\subset H_S$, or equivalently, $T\supset S$.
\end{proof}

The set of cuts admits a stratification by the invariance group of the cut. Take any $S\in\inii$. By Lemmas \ref{class=HS} and \ref{MinExists}, the non-ball cuts of $\gq$ whose invariance group is $H_S\cap\gq$ are parametrized by the following subset of $\rlex$:
$$
\op{nbc}(S)=\left\{x\in\rlex\setminus \gq\mid \supp(x)\subset S\ \mbox{ and }\ x_T\in\gq \mbox{ for all } T\in\inii,\ T\subsetneq S \right\}.
$$
Hence, the real model we are looking for is:
$$
\gnbc:=\bigsqcup_{S\in\inii}\op{nbc}(S).
$$

By Lemma \ref{convGap}, the two types of non-ball cuts are distinguished by the parameter $S$ as follows:
$$
\mbox{\tt nb+NG}=\bigsqcup_{\nexists\max(S)}\op{nbc}(S),\qquad
\mbox{\tt nb+G}=\bigsqcup_{\exists\max(S)}\op{nbc}(S)
$$

This description of the non-ball cuts yields another way to distinguish the two types of cuts.

\begin{lemma}\label{Srat}
An $x\in\op{nbc}(S)$ is of type {\tt \,nb+NG\,} if and only if $x\in\hq$.
\end{lemma}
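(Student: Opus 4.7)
My plan is to exploit the fact that for $x = (x_i) \in \rlex$, membership in $\hq$ is equivalent to the ``pointwise rationality'' condition $x_i \in Q_i$ for every $i \in I$, since well-ordered support is automatic in $\rlex$. Since the type of the cut $D_x$ (i.e., {\tt nb+G} versus {\tt nb+NG}) depends only on whether $S$ admits a maximum, the statement reduces to showing $x \in \hq \iff \nexists \max(S)$, which I would prove by handling the two implications separately via well-chosen truncations.

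For the direction ``$x \in \hq \Rightarrow \nexists\max(S)$'' I would argue the contrapositive. Assuming $i_0 := \max(S)$ exists, I set $T := S \setminus \{i_0\}$; since $S$ is an initial segment of $I$, so is $T$, and $T \subsetneq S$. The defining property of $\op{nbc}(S)$ then yields $x_T \in \gq$, and using $\supp(x) \subset S$ one computes directly that $x - x_T = x_{i_0} e_{i_0}$. Since $x \notin \gq$ while $x_T \in \gq$, this gives $x_{i_0} e_{i_0} \notin \gq$. But Lemma \ref{hahnSpecial} implies that the Hahn sum $\bigoplus_{i\in I} Q_i$ is contained in $\gq$, so $q e_{i_0} \in \gq$ for every $q \in Q_{i_0}$. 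Hence $x_{i_0} \notin Q_{i_0}$, and therefore $x \notin \hq$.

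For the converse ``$\nexists\max(S) \Rightarrow x \in \hq$'' I would verify coordinate-by-coordinate that $x_i \in Q_i$. If $i \notin S$ this is immediate since $\supp(x) \subset S$ forces $x_i = 0$. If $i \in S$, the no-maximum hypothesis supplies some $j \in S$ with $j > i$, and I would take $T := I_{\leq i}$. This is an initial segment of $I$, contained in $S$ by initiality of $S$, and strictly smaller than $S$ since $j \in S \setminus T$. Then $x_T \in \gq \subset \hq$ by the defining property of $\op{nbc}(S)$, and reading off its $i$-th coordinate gives $x_i \in Q_i$.

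The argument is essentially a bookkeeping of truncations, and I do not foresee any substantive obstacle. The only delicate point is verifying that the chosen $T$'s are genuine initial segments of $I$ (not merely of $S$) and are proper in $S$, both of which follow immediately from $S$ being itself an initial segment of $I$.
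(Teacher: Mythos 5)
Your proof is correct and follows essentially the same route as the paper's: both directions rest on the defining property of $\op{nbc}(S)$ (truncations $x_T\in\gq$ for initial segments $T\subsetneq S$) together with the inclusion of the Hahn sum in $\gq$ from Lemma \ref{hahnSpecial}, applied to the truncations at $I_{<\max(S)}$ and $I_{\le i}$. The only difference is organizational: where you verify $x_i\in Q_i$ coordinate by coordinate when $S$ has no maximum, the paper argues the contrapositive by locating the minimal index $i$ with $x_i\notin Q_i$ and showing it must be $\max(S)$.
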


\begin{proof}
Suppose that  $x=x_S=(x_j)_{j\in I}\not\in\hq$, and let $J=\supp(x)$. By assumption, $$J^0:=\{j\in J\mid x_j\not\in Q_j\}\ne\emptyset.$$
Since $J$ is well-ordered, there exists $i=\min\left(J^0\right)$. Since $x_i\ne0$, we have $i\in S$. 

Let  $R=I_{\le i}\subset S$. 
Since $x_R\not\in\gq$, 
we must have $R=S$ by the minimality of $S$. Hence, $i=\max(S)$.

Conversely, suppose that $S$ contains a maximal element $i$, and let $T=I_{<i}\subsetneq S$. By  the minimality of $S$, $x_{T}\in\gq$, so that $x_j\in Q_j$ for all $j<i$. This implies that $x_{i}\not\in Q_{i}$, so that $x\not\in\hq$. 

Indeed, if $x_i\in Q_i$, then the element $b=x_ie_i$ belongs to $\gq$ and we get a contradiction: $x=x_T+b\in\gq$. 
\end{proof}

In particular, the set {\tt nb+G} may be described in a more explicit form:
\begin{equation}\label{nbVG}
\mbox{\tt nb+G}=\bigsqcup_{i\in I}\left\{x+\xi e_i\in\rlex\,\mid\, x\in\gq,\ \supp(x)\subset I_{<i},\ \xi\in \R\setminus Q_i\right\}.
\end{equation}

\subsection*{Examples}\mbox{\null}\e


(Ex1) \ $\rk(\g)=1$, \,\quad {\tt nb+NG}\,$=\emptyset$,\quad {\tt nb+G}\,$=\R\setminus\gq$.\e

(Ex2) \ $\g=\R^2_{\lx}$,\qquad {\tt nb+NG}\,$=${\tt nb+G}\,$=\emptyset$.\e

(Ex3) \ $\g=\Q^2_{\lx}$,\qquad {\tt nb+NG}\,$=\emptyset$,\quad {\tt nb+G}\,$=\left((\R\setminus\Q)\times\{0\}\right)\,\sqcup\,\left(\Q\times(\R\setminus\Q)\right)$.\e


(Ex4) \ $\g=\R^{(\N)}$,\qquad {\tt nb+NG}\,$=\R^{\N}\setminus \R^{(\N)}$,\quad {\tt nb+G}\,$=\emptyset$.\e

(Ex5) \ $\g=\Q^{(\N)}$,\qquad {\tt nb+NG}\,$=\Q^{\N}\setminus \Q^{(\N)}$,\quad {\tt nb+G}\,$=\bigsqcup\nolimits_{i\in\N}\left(\Q^{i-1}\times \left(\R\setminus\Q\right)\times \{0\}^{\N_{>i}}\right)$.



\subsubsection{Cofinality and coinitiality of non-ball cuts}\label{subsubCofNB}

For all proper cuts $D$ in $\gq$, we clearly have $\ka(D)=\la(-D)$ and $\la(D)=\ka(-D)$. 




\begin{theorem}\label{CofNB}
Let $D$ be a non-ball cut in $\gq$. Let $S\in \inii$ be the initial segment such that $H_S$ is the invariance group of $D$. Then, the cofinality and coinitiality $(\ka,\la)$ of $D$  take the following values:
\begin{enumerate}
\item If $D$ is an {\tt \,nb+G} cut, then $\ka=\la=\aleph_0$.
\item If $D$ is an {\tt \,nb+NG} cut, then $\ka=\la=\cofin(S)$.
\end{enumerate}
\end{theorem}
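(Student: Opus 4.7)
The plan is to work in the real model of Section \ref{subsecSameRank}: choose the unique representative $x \in \op{nbc}(S) \subset \rlex$ of the cut $D$, so that $D^L = \{z \in \gq : z < x\}$ and $D^R = \{z \in \gq : z > x\}$, and write $v(y) = \min\supp(y)$ for $y \in \rlex\setminus\{0\}$. The central observation, valid in both cases, would be: \emph{for every $z \in D^L$ (resp.\ $D^R$), $v(x-z) \in S$.} If $k = v(x-z)$, then $z$ and $x$ agree on $I_{<k}$, so the truncation satisfies $z_{I_{<k}} = x_{I_{<k}} \in \gq$ by (\ref{gqgq}); but if $k \notin S$, the initial-segment property forces $S \subset I_{<k}$, whence $\supp(x) \subset I_{<k}$ and $x = x_{I_{<k}} \in \gq$, contradicting $x \in \op{nbc}(S)$. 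This funnels $\gq$-approximants of $x$ into the ``good'' region of $S$.

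For case (1), of type {\tt nb+G}, I would use (\ref{nbVG}) to write $x = x' + \xi e_i$ with $i = \max(S)$, $x' \in \gq$ supported on $I_{<i}$, and $\xi \in \R \setminus Q_i$. Since $Q_i$ is a nontrivial $\Q$-subspace of $\R$ and hence dense, pick $q_n \in Q_i$ with $q_n \nearrow \xi$ and set $a_n = x' + q_n e_i \in D^L$. A routine check using the key observation shows $(a_n)$ is cofinal in $D^L$: if $k = v(x-z) < i$, every $a_n$ dominates $z$; if $k = i$, then $z_i < \xi$ and eventually $q_n > z_i$. Together with $\ka(D) \geq \aleph_0$ (since non-ball cuts are not principal), this yields $\ka(D) = \aleph_0$, and $\la(D) = \aleph_0$ follows symmetrically from applying the same argument to $-D$.

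For case (2), of type {\tt nb+NG}, $S$ has no maximum. Fix a strictly increasing cofinal sequence $(i_\alpha)_{\alpha<\kappa}$ in $S$ with $\kappa = \cofin(S)$. Since $I_{\leq i_\alpha} \subsetneq S$ gives $x_{I_{\leq i_\alpha}} \in \gq$, set $y_\alpha = x_{I_{\leq i_\alpha}} - e_{i_\alpha} \in \gq$; the leading term of $x - y_\alpha$ is $e_{i_\alpha}$, placing $y_\alpha$ in $D^L$. Given $z \in D^L$ with $k = v(x-z) \in S$, choose $\alpha$ with $i_\alpha > k$; the leading term of $y_\alpha - z$ then matches that of $x-z$ at $k$, so $y_\alpha > z$. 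Hence $(y_\alpha)$ is cofinal and $\ka(D) \leq \cofin(S)$. For the reverse inequality, given any cofinal $(a_\beta)_{\beta<\mu}$ in $D^L$, the indices $k_\beta = v(x-a_\beta) \in S$ must themselves be cofinal in $S$: otherwise, taking $i' \in S$ strictly above all $k_\beta$ (using that $S$ has no max), the element $y = x_{I_{\leq i'}} - e_{i'} \in D^L$ dominates every $a_\beta$, a contradiction. Since $-D$ is again {\tt nb+NG} with the same $S$, $\la(D) = \ka(-D) = \cofin(S)$.

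The main obstacle is establishing the key observation linking $v(x-z)$ to $S$; without it, hypothetical elements of $D^L$ whose disagreement with $x$ occurs beyond $S$ would not be dominated by the constructed cofinal sequences, and the bookkeeping for both the upper and lower bounds on $\ka(D)$ would break down.
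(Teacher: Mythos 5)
Your proposal is correct and follows essentially the same route as the paper: realize the cut by its canonical representative $x\in\op{nbc}(S)$ in the real model and build explicit cofinal sequences in $\gq$ from truncations of $x$ perturbed at a single coordinate, with your ``key observation'' that $v(x-z)\in S$ appearing in the paper's proof as the inline remark that the first index where $a\in D_x^L$ differs from $x$ must lie in $S$. The only cosmetic difference is in case (2), where the paper indexes its sequence by $\supp(x)$ and invokes the order-embedding-with-cofinal-image fact to get both inequalities at once, while you index by an arbitrary cofinal sequence in $S$ and verify the lower bound $\ka\ge\cofin(S)$ directly; both are valid.
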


\begin{proof}
The two types of non-ball cuts are stable under multiplication by $-1$. Hence, if we check that $\ka(D)$ depends only on the invariance group $H(D)=H(-D)$, we deduce that $\la(D)=\ka(-D)=\ka(D)$.

Suppose that $D$ is an {\tt \,nb+G\,} cut, and let $i_M=\max(S)$. Let $T=I_{<i_M}\subsetneq S$. By (\ref{nbVG}),  $D$ is realised in the real model $\gnbc\subset\rlex\setminus\gq$ by a unique $x\in\gnbc$ of the form
$$
x=x_T+\xi e_{i_M},\qquad x_T\in\gq,\qquad \xi\in\R\setminus Q_{i_M}.  
$$   
Consider a countable sequence $(q_m)_{m\in\N}$ of rational numbers which is cofinal in $\R_{<\xi}$. Then, the countable sequence $\left(x_T+q_me_{i_M}\right)_{m\in \N}$ in $\gq$ is  cofinal in $D^L$. Thus, $\ka=\aleph_0$.
This ends the proof of (1).

Now, suppose that $D$ is an {\tt \,nb+NG\,} cut. By Lemma \ref{Srat}, $D$ is realised in the real model $\gnbc\subset\rlex\setminus\gq$ by a unique $x=(x_i)_{i\in I}\in\gnbc$ of the form
$$
x=x_S\in\hq\setminus\gq,\quad\mbox{such that}\quad x_T\in\gq\quad \mbox{for all }T\subsetneq S.  
$$   

Since $S$ does not contain a maximal element, the subset $\supp(x)\subset S$ is cofinal. Indeed, for any $j\in S$, we have $T=S_{\le j}\subsetneq S$, so that $x_T\in\gq$. Therefore, $x\ne x_T$ and there must exist $i\in\supp(x)$ such that $i>j$. As a consequence, $$\cofin(S)=\cofin(\supp(x)).$$

For all $j\in\supp(x)$, let $T_j=S_{\le j}\subsetneq S$. Let us construct an element $z(j)\in \hq$ as follows:
$$
z(j)=x_{T_j}+q_{j+1}e_{j+1},\qquad q_{j+1}\in Q_{j+1},\ q_{j+1}<x_{j+1},  
$$
where $j+1$ is the immediate successor of $j$ in $\supp(x)$, which is a well-ordered set, by definition.
Clearly, $z(j)\in\gq$ because $x_{T_j}$ and $q_{j+1}e_{j+1}$ belong both to $\gq$. 
Also, $$z(j)<x \quad\mbox{for all }j\in \supp(x).$$ Indeed, for $R=S_{<j+1}$ we have $z(j)_R=x_R$, because all eventual indices $i\in I$ such that $j<i<j+1$ do not belong to $\supp(x)$, so that $x_i=z(j)_i=0$.  We claim that the mapping
$$
\supp(x)\lra D_x^L,\qquad j\longmapsto z(j)
$$
is an order-preserving embedding with cofinal image. This will end the proof of the theorem: \,$\ka=\cofin(D_x^L)=\cofin(\supp(x))=\cofin(S)$.

Let us prove that the mapping $j\mapsto z(j)$ preserves the ordering. If $j<k$ in $\supp(x)$, then for $T=T_{
j+1}=S_{\le j+1}$, we have $\left(z(j)\right)_T<x_T=\left(z(k)\right)_T$. Hence, $z(j)<z(k)$.

Finally, for all $a\in D_x^L$, let $i\in I$ be the minimal index with $a_i<x_i$. We must have $i\in S$, because $i>S$ would imply  $x_S=a_S\in \gq$, which is false. Since $\supp(x)$ is cofinal in $S$, there exists $j\in\supp(x)$ such that $i<j$. Hence, $a<z(j)$, because $a_{T_j}<x_{T_j}=(z(j))_{T_j}$. Thus, the image of  $\supp(x)\to D_x^L$ is cofinal in $D_x^L$.    
\end{proof}



\subsection{Embedding rank-increasing small extensions in real vector spaces}\label{subsecIncRank}

An embedding $\iota\colon I\hk J$ of ordered sets  \emph{adds one element} if  $J\setminus \iota(I)$ is a one-element subset of $J$.

For all $S\in\inii$, consider the ordered set 
\begin{equation}\label{IS}
I_S=S+\{i_S\}+S^c,
\end{equation}
where $i_S$ is a formal symbol. The natural embedding $I\hk I_S$ adds one element. 

Consider the \emph{one-added-element hull} of $I$: $$\I:=I\cup\left\{i_S\mid S\in\inii\right\}.$$
We may consider a natural total ordering on $\I$ determined by

\begin{enumerate}
\item[(i)] For all $S\in\inii$, the restriction of the ordering to $I_S=I\cup\{i_S\}$ is the ordering considered in (\ref{IS}). 
\item[(ii)] $i_S<i_T\ \sii\ S\subsetneq T,\quad $for all $S,T\in\inii$.
\end{enumerate}
Note that $i_\emptyset=\min(\I)$, $i_I=max(\I)$.

\begin{lemma}\label{OAEUniv}
For any  embedding $I\hk J$ of ordered sets that adds one element, there exists a unique embedding $J\hk \I$ fitting into a commutative diagram
$$
\as{1.2}
\begin{array}{ccc}
J&&\\
\uparrow&\searrow&\\
I&\hra&\I
\end{array}
$$
The image of $J$ in $\I$ is $I_S$ for a unique $S\in\inii$.
\end{lemma}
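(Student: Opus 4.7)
The plan is to read off the correct initial segment $S$ directly from the position of the added element, then check that sending it to $i_S$ yields the required embedding. Write the given embedding as $\iota \colon I \hookrightarrow J$ and let $j_0$ be the unique element of $J \setminus \iota(I)$. Set
\[
S = \{i \in I \mid \iota(i) < j_0\} \subset I.
\]
Since $\iota$ is strictly order-preserving, $S$ is an initial segment of $I$, so the symbol $i_S \in \I$ is available. Define $\psi \colon J \to \I$ by $\psi(\iota(i)) = i$ for all $i \in I$ and $\psi(j_0) = i_S$.

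To verify $\psi$ is an embedding, the only nontrivial comparisons are between elements $\iota(i)$ and $j_0$. By the definition of $S$, we have $\iota(i) < j_0$ iff $i \in S$. On the other hand, condition (i) of the ordering on $\I$, applied to $I_S = S + \{i_S\} + S^c$, gives $i < i_S$ in $\I$ iff $i \in S$. Hence $\iota(i) < j_0$ iff $\psi(\iota(i)) < \psi(j_0)$, so $\psi$ is strictly order-preserving. Commutativity of the triangle is built into the definition, and the image is $\psi(J) = I \cup \{i_S\} = I_S$.

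For uniqueness, any embedding $\psi'$ making the diagram commute must act as $\iota(i) \mapsto i$ on $\iota(I)$, so the only freedom lies in the value $\psi'(j_0) \in \I$. Since $\psi'$ is injective, $\psi'(j_0)$ cannot lie in $I$, hence $\psi'(j_0) = i_T$ for some $T \in \inii$. Because $\psi'$ is strictly order-preserving, $\psi'(j_0)$ must satisfy $i < \psi'(j_0)$ for all $i \in S$ and $\psi'(j_0) < i$ for all $i \in S^c$. Applying condition (i) to $I_T$ yields $i < i_T$ iff $i \in T$, and the two constraints force $S \subset T$ and $T \subset S$, so $T = S$. This pins down $\psi'(j_0) = i_S$ and hence $\psi' = \psi$, and simultaneously shows that the image equals $I_S$ for a unique $S \in \inii$ (since distinct $S$ give distinct symbols $i_S \in \I$ by condition (ii)).

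The only real technicality is the case analysis inside $\I$ used for uniqueness; everything else is an immediate unwinding of the definitions of $\I$ and of the ordering rules (i)--(ii).
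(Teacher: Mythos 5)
Your proof is correct and complete. The paper in fact states Lemma \ref{OAEUniv} without proof, treating it as a routine unwinding of the definition of $\I$; your argument — reading off $S=\{i\in I\mid \iota(i)<j_0\}$, checking via rule (i) that $j_0\mapsto i_S$ is order-preserving, and using rules (i)--(ii) to pin down $T=S$ for uniqueness — is exactly the verification the authors leave to the reader.
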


\begin{lemma}\label{SemiUniv}
If the extension  $\g\hk\La$ of ordered groups increases the rank by one, there is a unique $S\in\inii$ and an embedding $\La\hk\R^{I_S}_{\lx}$ fitting into  commutative diagram:
$$
\as{1.4}
\begin{array}{ccccccc}
&&\La&&&&\\
&\nearrow&&\searrow&&&\\
\g&\hra&\rlex&\hra&\R^{I_S}_{\lx}&\hra&\rii
\end{array} 
$$
\end{lemma}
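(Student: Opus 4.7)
The plan is to piggyback on the Hahn-embedding construction recalled in Section \ref{subsecSameRank} and extend it from $\gq$ to $\La_\Q$. The uniqueness of $S$ is immediate: by hypothesis the induced embedding of ordered sets $I=\pcv(\g)\hookrightarrow\pcv(\La)$ adds exactly one element, so Lemma \ref{OAEUniv} yields a unique $S\in\inii$ together with an order-isomorphism $\pcv(\La)\simeq I_S$ which restricts to the identity on $I$.

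To produce the embedding $\La\hookrightarrow\R^{I_S}_{\lx}$, I would first pass to the divisible hull $\La_\Q$, whose skeleton is $\bigl(I_S;\,(Q'_j)_{j\in I_S}\bigr)$ with $Q'_j=C_j(\La_\Q)$. By Lemma \ref{rkEmbed}, for each $j\in I$ the inclusion $Q_j=C_j(\gq)\hookrightarrow Q'_j$ is an embedding of rank-one divisible groups, so the chosen positive element $1^j\in Q_j$ also picks out a positive element in $Q'_j$; by Proposition \ref{rk1} this determines a unique embedding $Q'_j\hookrightarrow\R$ extending the already chosen one on $Q_j$. For the new index $j=i_S$, fix any positive element $1^{i_S}\in Q'_{i_S}$ and obtain an embedding $Q'_{i_S}\hookrightarrow\R$. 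These choices set up the component data needed to run Hahn's theorem on $\La_\Q$ coherently with the data previously fixed on $\gq$.

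The remaining step is to rerun the Hahn construction reviewed in Section \ref{secHahn} on $\La_\Q$, using Banaschewski's lemma to select complements of convex subgroups of $\La_\Q$ that extend those already chosen in $\gq$. For any $H\in\cv(\gq)$, let $\bar H$ denote the convex subgroup of $\La_\Q$ generated by $H$; a short convexity argument gives $\bar H\cap\gq=H$, so the existing complement $H\cmpl\subseteq\gq$ satisfies $H\cmpl\cap\bar H=0$ and can be enlarged to a complement $\bar H\cmpl$ of $\bar H$ in $\La_\Q$. If such enlarged complements are chosen coherently in $\La_\Q$, the resulting Hahn embedding $\La_\Q\hookrightarrow\R^{I_S}_{\lx}$ restricts to the fixed $\gq\hookrightarrow\R^I_{\lx}\hookrightarrow\R^{I_S}_{\lx}$, and restricting further to $\g$ closes the required commutative diagram. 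The main obstacle lies precisely in this coherent extension: one must verify that Banaschewski's monotonicity condition can be preserved while constraining $\bar H\cmpl\supseteq H\cmpl$ on the image of $\cv(\gq)$ in $\cv(\La_\Q)$. A Zorn-type argument, taking the given system on $\gq$ as base case and enlarging subgroup by subgroup while preserving monotonicity, is what makes this work.
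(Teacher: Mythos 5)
Your proposal is correct and follows essentially the same route as the paper, whose proof consists of exactly the two ingredients you use: uniqueness of $S$ from $\pcv(\La)\simeq I_S$ via Lemma \ref{OAEUniv}, and existence of the embedding from Hahn's theorem in its relative form. You actually supply more detail than the paper does (the compatible choices of component embeddings and the relative Banaschewski extension of complements), and the one step you leave as a Zorn-type sketch is precisely the step the paper delegates wholesale to Hahn's embedding theorem and \cite[Sec.\ A]{Rib}.
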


\begin{proof}
The initial segment $S$ is uniquely determined by the condition $\pcv(\La)\simeq I_S$.

The proof  follows easily from Hahn's embedding theorem and Lemma \ref{OAEUniv}.
\end{proof}

Therefore, we are interested in the following subset of $\rii$, containing $\rlex$:
$$\rll=\rll(I):= \bigcup\nolimits_{S\in\inii}\R^{I_S}_{\lx}\,\subset\, \rii.$$

\noindent{\bf Notation. }For each $S\in\inii$ consider unit vectors
$$
e_S=(x_j)_{j\in \I}\in\R^{I_S}_{\lx}\subset \rii,\qquad x_j=0\ \mbox{ for all }j\ne i_S,\quad x_{i_S}=1.
$$

\subsubsection{A real model for the set of ball cuts in $\gq$}

All $x\in\rll\setminus\rlex$ determine a cut $D_x\in\cuts(\gq)$, with initial and final segments:
$$
(D_x)^L=\left\{a\in\gq\mid a<x\right\},\qquad
(D_x)^R=\left\{a\in\gq\mid a>x\right\}.
$$
The subgroup $\gen{\gq,x}\subset\rll$ generated by $\gq$ and $x$ is obviously $\g$-equivalent to the group $\gq(D_x)$ constructed in Section \ref{subsecGD}.
Since the extension $\g\subset \gen{\gq,x}$ increases the rank,  
Theorem \ref{ballsIncrease} shows that all these $D_x$ are ball cuts.

Conversely, for each ball cut $D\in\cuts(\gq)$, Theorem \ref{ballsIncrease} and Lemma \ref{SemiUniv} show that  the incommensurable small extension $\gq(D)$ is $\g$-equivalent to some subgroup of $\rii$, contained in $\rll\setminus\rlex$. Thus, $D$ is realized by some elements in $\rll\setminus\rlex$. 

The aim of this section is to find a canonical subset $\gbc\subset\rll\setminus\rlex$ such that the following mapping is an isomorphism of ordered sets:
$$
\gbc\lra\left\{\mbox{ball cuts in }\gq \right\},\qquad x\longmapsto D_x.
$$
The set $\gbc$ must be a set of representatives of the equivalence relation on $\rll\setminus \rlex$:
$$
x\sim_\cut x'\ \sii\ D_x=D_{x'}.
$$

For all $b\in\gq$, $S\in\inii$, let us denote
$$
b_S^+=b_S+e_S=((b_j)_{j\in S}\mid1\mid0\cdots0)\in\R^{I_S}_{\lx},$$$$ \ \,\,b_S^-=b_S-e_S=((b_j)_{j\in S}\mid-1\mid0\cdots0)\in\R^{I_S}_{\lx},
$$
where $\pm1$ is placed at the $i_S$-th coordinate.

Then, the reader may easily check that we may consider:
$$
\gbc:=\bigsqcup_{S\in\inii}\op{bc}^-(S)\sqcup\op{bc}^+(S),\qquad \op{bc}^\pm(S):=\left\{b_S^\pm\mid b\in\gq\right\}.
$$

Clearly, the corresponding ball cuts in $\gq$ are $\left(b+H_S\right)^\pm$. 
Note that $$b_S^-<b+H_S<b_S^+.$$


\subsubsection{Cofinality and coinitiality of ball cuts}\label{subsubCofB}

Let $D$ be a {\bf proper} ball cut in $\gq$. The invariance group $H=H(D)$ is a proper convex subgroup of $\gq$. Let us denote by $S=(S,S^c)$ the cut in $I$ intrinsically associated to $H$ in Lemma \ref{IConv}, uniquely determined by the condition $H=H_S$. 

In order to compute the cardinal numbers $\ka(D)$, $\la(D)$, let us split the proper ball cuts into eight types, encoded by a sequence of three signs: $\quad (\pm,\pm,\pm)$.\e

The $+/-$ in the 1st coordinate indicates if $D=(a+H)^+$ or $D=(a+H)^-$.\e

The $+/-$ in the 2nd coordinate indicates the existence/non-existence of $\max(S)$.\e

The $+/-$ in the 3rd coordinate indicates the existence/non-existence of $\min(S^c)$.\e

In other words, each one of the four types of ball cuts considered in Section \ref{subsec6types} splits into two subtypes according to the existence or not of $\min(S^c)$:
$$
\mbox{\tt (b+G)}^+=(+,+,\pm),\quad
\mbox{\tt (b+nG)}^+=(+,-,\pm),$$$$\mbox{\tt (b+G)}^-=(-,+,\pm),\quad
\mbox{\tt (b+nG)}^-=(-,-,\pm).
$$
Since $H(D)=H(-D)$, multiplication by $-1$ acts on these eight subtypes by changing the sign in the first coordinate.

Let us denote by $(\ka(S),\la(S))$ the cofinality and coinitiality of the cut $(S,S^c)$ in $I$. 
Since $H$ is a proper subgroup, $S\subset I$ is a non-empty subset. However, for $H=0$ we have $S=I$ and  $S^c=\emptyset$. \e

\noindent{\bf Convention. }For the improper cut $S=(I,\emptyset)$, we agree that $\la(S)=1$.\e

\begin{theorem}\label{CofB}
Let $D$ be a proper ball cut in $\gq$. Let $S\in \inii$ be the initial segment such that $H_S$ is the invariance group of $D$. 
Then, the cofinality and coinitiality of $D$ take the values indicated in  the table in Figure \ref{Table8BallCuts}.
\end{theorem}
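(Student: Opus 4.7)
The plan is to realize $D$ in the real model of Section \ref{subsecIncRank}, reduce the eight cases to four via duality, and then compute the cofinality and coinitiality directly from the structure of $S$ and $S^c$ inside $I$.

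Since $H(-D)=H(D)$ and $-(b+H)^+ = (-b+H)^-$, multiplication by $-1$ exchanges $\ka(D)$ with $\la(D)$ while flipping only the first sign of the type. Hence it suffices to treat $D = (b+H_S)^+$, realized as $b_S^+ = b_S + e_S \in \R^{I_S}_{\lx}$. For this cut, $b+H_S$ is cofinal in $D^L$, and shifting $D^R$ by $-b$ yields $(\gq\setminus H_S)^{>0}$ (using that any positive element outside the convex subgroup $H_S$ dominates all of $H_S$). Consequently,
\[
\ka(D) \;=\; \cofin(b+H_S) \;=\; \cofin(H_S), \qquad \la(D) \;=\; \coini\bigl((\gq\setminus H_S)^{>0}\bigr).
\]

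The main step consists of two dual computations, where I use the identification $S^c = \{j \in I : H_j \subseteq H_S\}$, $\,S = \{j \in I : H_j \supsetneq H_S\}$ of Lemma \ref{IConv}. First, $\cofin(H_S) = \aleph_0$ when $\min(S^c)=j_0$ exists and $H_S \ne 0$, because then $H_S = H_{j_0}$ is principal and the $\N$-multiples of any positive generator $g$ are cofinal by the defining property of $H(g)$. When $\min(S^c)$ does not exist, a coinitial sequence $(j_\alpha)_{\alpha < \la(S)}$ in $S^c$ together with positive generators $h_\alpha$ of $H_{j_\alpha}$ yields a cofinal sequence in $H_S^{>0}$, and the reverse inequality $\cofin(H_S) \ge \la(S)$ is obtained by mapping any cofinal sequence $(h_\beta)$ in $H_S^{>0}$ to $(\val(h_\beta))$, which must be coinitial in $S^c$. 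Dually, $\coini((\gq\setminus H_S)^{>0}) = \aleph_0$ when $\max(S) = i_M$ exists (then $H_S = H_{i_M}^*$, and the rank-one component $C_{i_M}$ provides an $\aleph_0$-coinitial family of positive elements, lifted through the surjection $H_{i_M} \twoheadrightarrow C_{i_M}$), while $\coini((\gq\setminus H_S)^{>0}) = \ka(S)$ otherwise, via a cofinal sequence $(i_\alpha)$ in $S$ and the positive generators of $H_{i_\alpha}$.

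Combining these two computations produces the four entries of the table for sign $1 = +$, and the four entries for sign $1 = -$ follow immediately by swapping $\ka$ and $\la$. The only genuinely degenerate case is the principal cut $D = b^+$, where $H_S = 0$, $S = I$, and $S^c = \emptyset$; here $\ka(D) = 1$ by direct inspection, and under the stated convention $\la(I,\emptyset) = 1$ this agrees with the general formula $\ka(D) = \la(S)$. The main obstacle is the bookkeeping: one has to handle $H_S = H_{j_0}$ (when $\min(S^c)$ exists) and $H_S = H_{i_M}^*$ (when $\max(S)$ exists) correctly, and use the descending-inclusion ordering on $I$ with the right monotonicity, so that ``coinitial in $S^c$'' really translates to ``cofinal in $H_S^{>0}$'' and ``cofinal in $S$'' to ``coinitial in $(\gq\setminus H_S)^{>0}$''.
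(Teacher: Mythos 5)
Your plan is correct and follows essentially the same route as the paper: realize the cut in the real model, use the multiplication-by-$-1$ symmetry to halve the case analysis, and exhibit cofinal/coinitial families indexed by $\N$, $S$, or $S^c$ built from (generators of) the principal convex subgroups, with the archimedean component supplying the $\aleph_0$ in the boundary cases and the convention $\la(I,\emptyset)=1$ absorbing the principal cut. The only cosmetic difference is organizational: the paper computes $\ka$ for all four relevant types via explicit vectors $e_i$, $e_S$ and then deduces $\la$ by duality, whereas you treat only the $+$ cuts but compute both $\ka=\cofin(H_S)$ and $\la=\coini\bigl((\gq\setminus H_S)_{>0}\bigr)$ intrinsically.
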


\begin{proof}
Let $D=\left(a+H_S\right)^-$, or $D=\left(a+H_S\right)^+$, for some $a\in\gq$. Denote $\ka=\ka(D)$, $\la=\la(D)$. Since the operation of shifting does not change the pair $(\ka,\la)$, we may assume that $a=0$.

In the real model of $\gbc$, the cut $D=H_S^-$ is realised by $x=-e_S=(0\mid -1\mid 0)$, where the $-1$ is placed at the $i_S$-th coordinate.   

If there exists $i_M=\max(S)$,  the countable sequence $\left((-1/n)e_{i_M}\right)_{n\in\N}$ is cofinal in $D_x^L$. Thus, for the cuts of the type $\mbox{\tt \,(b+G)}^-$ we have $\ka=\aleph_0$.

If $S$ has no maximal element, the mapping
$$
S\lra D_x^L,\qquad i\longmapsto -e_i
$$
is an order-preserving embedding with cofinal image. Thus, for the cuts of the type \mbox{\tt \,(b+NG)$^-$\,} we have $\ka=\cofin(S)=\ka(S)$.

In the real model of $\gbc$, the cut $D=H_S^+$ is realised by $x=e_S=(0\mid 1\mid 0)$, where the $1$ is placed at the $i_S$-th coordinate.   

If there exists $i_m=\min(S^c)$,  the countable sequence $\left(ne_{i_m}\right)_{n\in\N}$ is cofinal in $D_x^L$. Thus, $\ka=\aleph_0$.
Now, suppose that $S^c$ has no minimal element. If $S^c=\emptyset$, then $D=0^+$ and $\max(D^L)=0$, so that $\ka(D)=1$. Thus, $\ka=\la(S)$, after our convention. If $S^c\ne\emptyset$, the mapping
$$
\left(S^c\right)\opp\lra D_x^L,\qquad i\longmapsto e_i
$$
is an order-preserving embedding with cofinal image. Thus, $\ka=\coini(S^c)=\la(S)$.

Therefore, for the cuts of the types $\mbox{\tt \,(b+G)}^+$ and \mbox{\tt \,(b+NG)$^+$\,} we have $\ka=\aleph_0$ or $\ka=\la(S)$, according to the existence or not of a minimal element in $S^c$.

Since multiplication by $-1$ changes the sign of the first coordinate, the knowledge of $\ka$ for all types of ball cuts determines the values of $\la=\ka(-D)$ as well.  
\end{proof}

\begin{figure}
\caption{The eight subtypes of ball cuts}\label{Table8BallCuts}\e

\begin{center}
\as{1.1}
\begin{tabular}{|c|c|c|c|}
\hline
type&subtype&$\ka(D)$&$\la(D)$ \\\hline
\multirow{2}{3.5em}{$\mbox{\tt (b+G)}^+$}&$(+,+,+)$&$\aleph_0$&$\aleph_0$ \\
&$(+,+,-)$&$\la(S)$&$\aleph_0$\\\hline
\multirow{2}{4em}{$\mbox{\tt (b+nG)}^+$}&$(+,-,+)$&$\aleph_0$&$\ka(S)$ \\
&$(+,-,-)$&$\la(S)$&$\ka(S)$ \\\hline
\multirow{2}{4em}{$\mbox{\tt (b+G)}^-$}&$(-,+,+)$&$\aleph_0$&$\aleph_0$ \\
&$(-,+,-)$&$\aleph_0$&$\la(S)$ \\\hline
\multirow{2}{4em}{$\mbox{\tt (b+nG)}^-$}&$(-,-,+)$&$\ka(S)$&$\aleph_0$ \\
&$(-,-,-)$&$\ka(S)$&$\la(S)$ \\\hline
\end{tabular}
\end{center}
\end{figure}

\section{Applications to valuation theory}\label{secVals}

Consider two valuations on a polynomial ring $\kx$ over a field $K$:
$$\mu\colon \kx\to\Omega\infty,\qquad \nu\colon \kx\to\Omega'\infty.$$
Let $\p_\mu=\mu^{-1}(\infty)$, $\p_\nu=\nu^{-1}(\infty)$ be their supports.
Let $\gm\subset\Omega$, $\gn\subset\Omega'$ be the subgroups generated by $\mu(\kx\setminus\p_\mu)$, $\nu(\kx\setminus\p_\nu)$, respectively.

We say that $\mu$ and $\nu$ are \emph{equivalent}, and we write $\mu\sim\nu$, if there is an isomorphism of ordered groups $\varphi\colon \gm \ism\gn$ fitting into a commutative diagram
$$
\as{1.4}
\begin{array}{ccc}
\gm\infty&\stackrel{\varphi}\lra\ &\!\!\gn\infty\\
\quad\ \mbox{\scriptsize$\mu$}&\nwarrow\ \nearrow&\!\!\!\mbox{\scriptsize$\nu$}\quad\\
&\kx&
\end{array}
$$

Let us denote by $[\mu]$ the class of all valuations on $\kx$ which are equivalent to $\mu$.\e

Let $(K,v)$ be a valued field, with group of values $\g=v(K^*)$. Let $I=\pcv(\g)$. As we saw in the last section, we may fix an embedding  of ordered groups, 
$$
\ell\colon \g\hra\rlex\subset\rll\subset\rii.
$$

Let us focus on the problem of describing all equivalence classes of valuations on $\kx$ 
whose restriction to $K$ is equivalent to $v$.

\subsection{Equivalence classes of valuations on $\kx$}
If the restriction to $K$ of the valuation $\mu\colon \kx\to\Omega\infty$ is equivalent to $v$, there exists an embedding of ordered groups $\iota\colon\g\hk\Omega$, fitting into a commutative diagram 
\begin{equation}\label{iota}
\as{1.5}
\begin{array}{ccc}
\kx&\stackrel{\mu}\lra&\gm\infty\\
\uparrow&&\ \uparrow\mbox{\tiny$\iota$}\\
K&\stackrel{v}\lra&\g\infty
\end{array}
\end{equation}

We say that $\mu/v$ is \emph{commensurable}, \emph{preserves the rank}, or \emph{increases the rank by one}, if the extension $\iota\colon \g\hk\g_\mu$ has this property, respectively.

By Theorem \ref{AllSmall}, this embedding $\iota$ is a small extension.
By Theorem \ref{MaxEqRk} and Lemma \ref{SemiUniv}, there is an embedding $j\colon \gm\hk\R^{I_S}_{\lx}$, for some $S\in\inii$, fitting into a commutative diagram:   
$$
\as{1.5}
\begin{array}{ccccc}
\kx&\stackrel{\mu}\lra&\gm\infty&&\\
\uparrow&&\ \uparrow\mbox{\tiny$\iota$}&\searrow\!\!\!\raise1.4ex\hbox{\tiny$j$\;}&\\
K&\stackrel{v}\lra&\g\infty&\stackrel{\ell}\lra&\R^{I_S}_{\lx}\infty
\end{array}
$$

Take $\nu$ to be the valuation on $\kx$ determined by the mapping $j\circ \mu$. Its value group is $\gn=j(\gm)\subset \R^{I_S}_{\lx}$, and the tautological isomorphism $j\colon \gm\ism\gn$ shows that the two valuations are equivalent.
This proves the following result.


\begin{proposition}\label{riiUniverse}
Every valuation on $\kx$ whose restriction to $K$ is equivalent to $v$ is equivalent to some $\rii$-valued valuation 
$$
\nu\colon \kx\lra \rii\infty
$$
such that $\ \gn\subset \rll$. If $\nu/v$ is commensurable, or preserves the rank, then $\ \gn\subset\gq$, or $\ \gn\subset\rlex$, respectively.
\end{proposition}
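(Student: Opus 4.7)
The plan is to follow the construction already sketched in the two paragraphs preceding the proposition, organizing it along the three possible behaviours of the small extension $\g\hookrightarrow\gm$, and at each step taking care that the embeddings we build extend the fixed ambient embedding $\ell\colon\g\hookrightarrow\rlex$.

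First, start with an arbitrary $\mu\colon\kx\to\Omega\infty$ whose restriction to $K$ is equivalent to $v$. This equivalence produces an order-preserving embedding $\iota\colon\g\hookrightarrow\g_\mu$ fitting into the diagram~(\ref{iota}). By Theorem~\ref{AllSmall}, $\iota$ is a small extension, and by Lemma~\ref{small<=1} it must fall into exactly one of three cases: it is commensurable, it preserves the rank without being commensurable, or it increases the rank by one.

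Next, in each case construct an order-preserving embedding $j\colon\g_\mu\hookrightarrow\rii$ whose restriction to $\g$ agrees with $\ell\circ\iota^{-1}$, and whose image lies in $\rll$. In the commensurable case, Lemma~\ref{MaxComm} gives an embedding $\g_\mu\hookrightarrow\gq$ extending $\iota$, and Lemma~\ref{MInDiv} shows that our fixed $\ell$ extends uniquely to $\gq\hookrightarrow\rlex$; composing these produces $j$, with image already contained in $\gq\subset\rlex\subset\rll$. In the rank-preserving (incommensurable) case, Theorem~\ref{MaxEqRk} directly furnishes a $j\colon\g_\mu\hookrightarrow\rlex$ prolonging $\ell$. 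Finally, in the rank-increasing case, Lemma~\ref{SemiUniv} yields a unique $S\in\op{Init}(I)$ and an embedding $j\colon\g_\mu\hookrightarrow\R^{I_S}_{\lx}$ prolonging $\ell$, so $j(\g_\mu)\subset\R^{I_S}_{\lx}\subset\rll$.

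Finally, define $\nu:=j\circ\mu\colon\kx\lra\rii\infty$. Since $j$ is an injective order-preserving group homomorphism with $j(\g_\mu)=\g_\nu$, the inverse $j^{-1}\colon\g_\nu\iso\g_\mu$ witnesses the equivalence $\nu\sim\mu$ (commutativity with respect to $\kx$ is automatic from the definition). The inclusion $\g_\nu\subset\rll$ is exactly the conclusion reached in the construction of $j$, and the refinements $\g_\nu\subset\gq$ (commensurable) and $\g_\nu\subset\rlex$ (rank-preserving) are what the first two cases produce.

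The only non-routine point is that the embedding $j$ be compatible with the \emph{previously fixed} embedding $\ell$, rather than merely with $\iota$ up to an automorphism of $\g$; this is precisely the role of the commutative-diagram clauses in Theorem~\ref{MaxEqRk} and Lemma~\ref{SemiUniv} (and of the uniqueness statement in Lemma~\ref{MInDiv} in the commensurable case), so no new work is needed beyond invoking them in the correct order.
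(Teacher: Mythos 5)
Your proof is correct and follows essentially the same route as the paper: invoke Theorem \ref{AllSmall} to see that $\g\hookrightarrow\gm$ is small, then use Theorem \ref{MaxEqRk} and Lemma \ref{SemiUniv} to produce an embedding $j$ of $\gm$ into $\rlex$ or $\R^{I_S}_{\lx}\subset\rll$ over the fixed $\ell$, and set $\nu=j\circ\mu$. Your explicit treatment of the commensurable case via Lemmas \ref{MaxComm} and \ref{MInDiv} is a reasonable (and slightly more careful) way to justify the refinement $\gn\subset\gq$, which the paper leaves implicit.
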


Therefore, we are led to find the equivalence classes of valuations in the subset
$$
\tz=\left\{\mu\colon \kx\to\rii\mid \mu\ \mbox{valuation},\ \gm\subset\rll\right\}
$$
of the set of all valuations with values in the fixed ordered group $\rii$.

This set $\tz$ has a natural structure of a tree. It admits a partial ordering:
$$
\mu\le\nu \sii \mu(f)\le\nu(f)\quad\mbox{for all }\ f\in\kx,
$$
and all intervals in $\tz$ are totally ordered.

For $\mu\in\tz$, denote by $\kpm\subset\kx$ the set of \emph{MacLane--Vaqui\'e key polynomials} for $\mu$. An element in $\kpm$ is a monic polynomial whose initial term generates a prime ideal in the graded algebra of $\mu$, which cannot be generated by the initial term of a polynomial of smaller degree. 

The subset of \emph{leaves} of $\tz$ (maximal elements) is characterized as follows:
$$
\ll(\tz)=\left\{\mu\in\tz\mid \kpm=\emptyset\right\}.
$$

Let $\tz^{\op{inn}}\subset\tz$ be the subtree of all inner nodes. For $\mu\in\tz^{\op{inn}}$, we define the \emph{degree} and the \emph{singular value} of $\mu$ as
$$
\deg(\mu)=\deg(\phi),\qquad \sval(\mu)=\mu(\phi),
$$
where $\phi\in\kpm$ is a key polynomial of minimal degree. It is well known that $\sval(\mu)$ is independent of the choice of $\phi$.

In order to classify the nodes of $\tz$ under equivalence of valuations, we establish on $\rll$ an equivalence relation. \e

\defn Two elements $x,y\in\rll$ are $\sme$-equivalent, and we write $x\sim_\sme y$, if there exists an isomorphism of ordered groups between the subgroups $\gen{\g,x}$ and $\gen{\g,y}$ which maps $x$ to $y$ and acts as the identity on $\g$.\e

With this equivalence at hand, we may characterize equivalence of valuations in $\tz$ as follows \cite[Prop. 6.3]{AGNR}.

\begin{proposition}\label{motivation}
Let $\mu,\nu\in\tz$ be two inner nodes. Then, $\mu\sim\nu$ if and only if the following three conditions hold:

(a) \ $\kpm=\kp(\nu)$.

(b) \ For all \,$a\in\kx\,$ such that \,$\deg(a)<\deg(\mu)$, we have $\,\mu(a)=\nu(a)$. 

(c) \ $\sval(\mu)\sim_\sme \sval(\nu)$.
\end{proposition}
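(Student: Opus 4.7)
The plan is to analyze both implications through the fundamental MacLane--Vaqui\'e $\phi$-adic expansion formula
$$
\mu(f) = \min_{i \geq 0}\bigl\{\mu(a_i) + i\sval(\mu)\bigr\},
$$
for the $\phi$-adic expansion $f = \sum_i a_i \phi^i$ (with $\deg(a_i) < \deg(\phi)$), and analogously for $\nu$. Throughout I write $d = \deg(\mu)$ and fix $\phi \in \kpm$ of degree $d$.

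For the forward direction, suppose $\mu \sim \nu$ via an ordered-group isomorphism $\varphi \colon \gm \iso \gn$. Since both valuations restrict on $K$ to $\ell \circ v$, necessarily $\varphi|_\g = \id_\g$. Part (a) follows because $(\varphi, \id_\kx)$ induces an isomorphism of graded algebras $\gr_\mu(\kx) \iso \gr_\nu(\kx)$, and the set of key polynomials is characterized intrinsically inside this graded algebra (monic polynomials of minimal degree whose initial terms generate prime ideals); hence $\kpm = \kp(\nu)$ and $\deg(\mu) = \deg(\nu)$. For (c), $\sval(\nu) = \nu(\phi) = \varphi(\mu(\phi)) = \varphi(\sval(\mu))$, so restricting $\varphi$ to $\gen{\g, \sval(\mu)}$ yields the required sme-isomorphism onto $\gen{\g, \sval(\nu)}$. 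Part (b) is the most delicate: I would induct on the depth of $\mu$ in $\tz$, showing that $\mu|_{\kx_{<d}}$ is determined by the predecessor chain of key polynomials (as its values can be reconstructed from that chain by iterated $\phi$-adic expansion at each level), which coincides with that of $\nu$ by (a).

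For the converse, assume (a), (b), (c). Set $\g^* = \gen{\mu(a) : a \in \kx_{<d},\,a \neq 0}$, which by (b) equals the analogous subgroup attached to $\nu$, viewed as a common subgroup of $\rii$. The $\phi$-adic formula gives $\gm = \g^* + \Z\,\sval(\mu)$ and $\gn = \g^* + \Z\,\sval(\nu)$. Taking the sme-isomorphism $\psi \colon \gen{\g, \sval(\mu)} \iso \gen{\g, \sval(\nu)}$ from (c), I would extend it to a map $\varphi \colon \gm \to \gn$ by declaring $\varphi$ to be the identity on $\g^*$ and to send $\sval(\mu) \mapsto \sval(\nu)$. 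Once $\varphi$ is known to be a well-defined ordered-group isomorphism, the $\phi$-adic formula combined with (b) yields
$$
\varphi(\mu(f)) = \min_i\bigl\{\mu(a_i) + i\sval(\nu)\bigr\} = \min_i\bigl\{\nu(a_i) + i\sval(\nu)\bigr\} = \nu(f),
$$
so $\mu \sim \nu$.

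The hard part will be verifying that the extension $\varphi$ is well-defined and order-preserving. This reduces to showing that every integer relation of the form $n\sval(\mu) + g_1 = g_2$, with $g_1, g_2 \in \g^*$, forces the corresponding relation $n\sval(\nu) + g_1 = g_2$ in $\gn$. Writing $g_1 = \mu(b)$ and $g_2 = \mu(a)$ with $a,b \in \kx_{<d}$, such a relation is equivalent to the assertion that the initial terms of $\phi^n b$ and $a$ are proportional in $\gr_\mu(\kx)$ at degree $\mu(a)$, a condition encoded by (a) in terms of the common key-polynomial data and by (b) in terms of the coincidence $\mu|_{\kx_{<d}} = \nu|_{\kx_{<d}}$. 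Together (a) and (b) force the same proportionality to hold in $\gr_\nu(\kx)$, yielding $\nu(\phi^n b) = \nu(a)$ and hence $n\sval(\nu) + g_1 = g_2$. Order-preservation of $\varphi$ then follows automatically from the order-preservation of $\psi$ and the fact that $\g^*$ sits inside $\rii$ as a common subgroup, so the orderings of $\gm$ and $\gn$ are both inherited from $\rii$ and match under the identifications.
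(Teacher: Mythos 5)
First, a point of comparison: the paper does not prove this proposition at all --- it is quoted from \cite[Prop.\ 6.3]{AGNR} --- so there is no internal argument to measure yours against, and any proof here must genuinely import the MacLane--Vaqui\'e machinery. Judged on its own terms, your outline has the right skeleton (everything is funnelled through the $\phi$-adic expansion and the subgroup $\g^*$ generated by the values of polynomials of degree less than $d=\deg(\mu)$, for which indeed $\gm=\g^*+\Z\,\sval(\mu)$), but both directions rest on an unstated structural fact, and one of your steps is false as written.

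The missing ingredient is the claim that $\g^*\subset\gq$, i.e.\ that $\mu(a)$ is commensurable over $\g$ whenever $\deg(a)<\deg(\mu)$. This is precisely what makes condition (c) strong enough: $\sval(\mu)\sim_\sme\sval(\nu)$ only records how the two singular values sit relative to $\gq$ (they induce the same quasi-cut in $\gq$), so it can only govern relations $n\,\sval(\mu)+g_1=g_2$ and inequalities $n\,\sval(\mu)+g>0$ when $g,g_1,g_2$ can be divided back into $\gq$; if $\g^*$ were not contained in $\gq$, your extension of $\psi$ to $\varphi$ would be neither demonstrably well defined nor order-preserving. The same fact is what settles the forward direction of (b): once $\g^*\subset\gq$, the restriction of $\varphi$ to $\g^*$ is a $\g$-isomorphism between subgroups of $\gq$ and hence the identity (this is exactly the Lemma of Section 5 of the paper), giving $\nu(a)=\varphi(\mu(a))=\mu(a)$. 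Your proposed substitutes do not work: ``induction on the depth of $\mu$ in $\tz$'' is unavailable because nodes reached through limit augmentations have infinite depth, and condition (a) concerns only $\kpm$ itself, not the predecessor chain of $\mu$, so it cannot by itself reconstruct $\mu$ on $\kx_{<d}$; and the assertion that a relation $n\,\sval(\mu)+\mu(b)=\mu(a)$ is ``equivalent to the initial terms of $\phi^nb$ and $a$ being proportional in $\gr_\mu(\kx)$'' is simply false --- equality of values does not make initial terms associates. The inclusion $\g^*\subset\gq$ is true, but proving it requires the structure theory of MacLane--Vaqui\'e chains (a node admitting key polynomials of degree strictly larger than its own degree must be residue-transcendental, hence commensurable, over its predecessor, and the values on $\kx_{<d}$ are inherited from such predecessors); this is the real content hiding behind Proposition \ref{motivation}, and it has to be supplied explicitly, e.g.\ from \cite{AGNR}.
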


Consider any subset $\gsme\subset\rll$ which is a faithful set of representatives of all $\sme$-classes in $\rll$. Consider the subtree 
$$
\tsim=\ll(\tz)\sqcup\left\{\mu \in\tz^{\op{inn}}\mid \sval(\mu)\in\gsme\right\}.
$$

Then, the following holds \cite[Thm. 7.1]{AGNR}.

\begin{theorem}\label{main2}
The mapping $\mu\mapsto[\mu]$ induces a bijection between $\tsim$ and the set of equivalence classes of valuations on $\kx$ whose restriction to $K$ is equivalent to $v$.
\end{theorem}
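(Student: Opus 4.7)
The plan is to establish the bijection by proving surjectivity and injectivity of the mapping $\mu\mapsto[\mu]$ from $\tsim$ to the set of equivalence classes of valuations on $\kx$ whose restriction to $K$ is equivalent to $v$. Both halves pivot on Propositions \ref{riiUniverse} and \ref{motivation}: the first lets us replace an arbitrary representative by one in $\tz$, while the second reduces equivalence of inner nodes in $\tz$ to three concrete conditions, the crucial one being $\sme$-equivalence of singular values.

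For surjectivity, I would start with an arbitrary equivalence class $[\mu_0]$. Proposition \ref{riiUniverse} permits the assumption $\mu_0\in\tz$. If $\mu_0\in\ll(\tz)$ it already lies in $\tsim$; otherwise $\mu_0$ is an inner node with $d:=\deg(\mu_0)$, minimal key polynomial $\phi$, and singular value $x:=\sval(\mu_0)=\mu_0(\phi)\in\rll$. Choose $y\in\gsme$ with $y\sim_\sme x$. I would then construct $\mu'\in\tz$ via MacLane--Vaqui\'e augmentation: keep the restriction to $\kx_{<d}$ identical to that of $\mu_0$, declare $\mu'(\phi):=y$, and extend to all of $\kx$ by the $\phi$-expansion formula. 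Once the existence of such an augmented valuation is secured, conditions (a)--(c) of Proposition \ref{motivation} hold by construction, so $\mu_0\sim\mu'$; since $\sval(\mu')=y\in\gsme$, we get $\mu'\in\tsim$ with $[\mu']=[\mu_0]$.

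For injectivity, assume $\mu,\nu\in\tsim$ with $\mu\sim\nu$. Equivalence preserves the emptiness of the key-polynomial set, so $\mu,\nu$ are either both leaves or both inner nodes. In the inner-node case, Proposition \ref{motivation} yields $\sval(\mu)\sim_\sme\sval(\nu)$; since $\gsme$ is a faithful set of $\sme$-representatives, this forces $\sval(\mu)=\sval(\nu)$. Combined with conditions (a) and (b) of that proposition, plus the MacLane--Vaqui\'e uniqueness principle that an inner node is determined by its values on $\kx_{<\deg(\mu)}$ together with the value of one minimal key polynomial, we obtain $\mu=\nu$ as mappings $\kx\to\rii\infty$. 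The leaf case reduces to showing that an equivalence between two leaves in $\tz$ must act as the identity on their value groups; this follows from the rigidity imposed by $\gm,\gn\subset\rii$ both being small extensions of $\g$ whose polynomial images are pinned down by the defining commutative diagram of equivalence.

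The main obstacle is the surjectivity construction: I must verify that the prescription defining $\mu'$ genuinely yields a valuation on $\kx$, that it lies in $\tz$ (i.e. $\g_{\mu'}\subset\rll$), and that $\mu'$ is an inner node sharing key polynomials with $\mu_0$ and having $\sval(\mu')=y$. Existence is a standard MacLane--Vaqui\'e ordinary augmentation; the value-group containment follows since $\langle\g,y\rangle\subset\R^{I_S}_{\lx}\subset\rll$ whenever $y\in\R^{I_S}_{\lx}$ for some $S\in\inii$; and the identification of key polynomials follows from the fact that ordinary augmentations by $\phi$ preserve the family of key polynomials of degree $d$. Everything else is the bookkeeping encapsulated by Propositions \ref{riiUniverse} and \ref{motivation}.
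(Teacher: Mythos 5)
The paper does not actually prove this statement: it is quoted directly from \cite[Thm.\ 7.1]{AGNR}, so your proposal can only be judged on its own merits. Your skeleton --- surjectivity via Proposition \ref{riiUniverse} followed by moving the singular value into $\gsme$, and injectivity via Proposition \ref{motivation}, faithfulness of $\gsme$, and the fact that an inner node is determined by its values in degree $<\deg(\mu)$ together with the value of one minimal-degree key polynomial --- is the right one, and the inner-node half of injectivity is essentially complete.

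There are, however, two genuine gaps. First, your treatment of equivalent leaves rests on a false principle: small extensions $\g\subset\gm\subset\rii$ are emphatically \emph{not} rigid (if they were, the whole apparatus of $\sim_\sme$ and the choice of $\gsme$ would be pointless), so ``rigidity imposed by $\gm,\gn\subset\rii$ being small extensions'' proves nothing. What actually saves the leaf case is the structural fact that $\kpm=\emptyset$ forces $\mu$ to have nontrivial support or to be valuation-algebraic; in either case $\gm/\g$ is a torsion group, hence $\gm$ embeds uniquely into $\gq$ by Lemma \ref{MaxComm}, and a $\g$-equivariant order isomorphism between subgroups of $\gq$ is the identity. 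You must invoke this characterization of leaves explicitly. Second, in the surjectivity step condition (a) of Proposition \ref{motivation}, namely $\kp(\mu_0)=\kp(\mu')$, does not ``hold by construction'' and is not covered by your remark about ordinary augmentations: $\mu'$ is not an augmentation of $\mu_0$, since you replace $\mu_0(\phi)$ by an $\sme$-equivalent value, not by a larger one. The clean route avoids Proposition \ref{motivation} here altogether: when $x=\sval(\mu_0)\notin\gq$, extend the order isomorphism $\gen{\g,x}\to\gen{\g,y}$ to a $\gq$-fixing isomorphism $\varphi$ defined on all of $\g_{\mu_0}$ (this uses the further structural input that values of polynomials of degree $<\deg(\mu_0)$ then lie in $\gq$), and set $\mu':=\varphi\circ\mu_0$; this is automatically a valuation equivalent to $\mu_0$, with the same key polynomials and $\sval(\mu')=y\in\gsme$, so no re-verification of the valuation axioms or of condition (a) is needed.
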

 
\subsection{Quasi-cuts. A concrete model for $\gsme$}\mbox{\null}\e

A \emph{quasi-cut} in $\gq$ is a pair $D=\left(D^L,D^R\right)$ of subsets such that $D^L\le D^R$ and $D^L\cup D^R=\gq$.
Then, $D^L$ is an initial segment of $\gq$, $D^R$ is a final segment of $\gq$ and $D^L\cap D^R$ consists of at most one element.

The set $\op{Qcuts}(\gq)$ of all quasi-cuts in $\gq$ admits a total ordering:
$$
D=\left(D^L,D^R\right)\le E=\left(E^L,E^R\right) \ \sii\ D^L\subset E^L\quad\mbox{and}\quad D^R\supset E^R.
$$
There is an embedding of ordered sets $\gq\hk\op{Qcuts}(\gq)$, which assigns to every $a\in\gq$ the \emph{principal quasi-cut} $\left(\left(\gq\right)_{\le a},\left(\gq\right)_{\ge a}\right)$.

The real models for the sets of cuts in $\gq$ that we discussed in Section \ref{secRealVS}, yield a quite concrete choice for the ordered set $\gsme$.

Inded, the equivalence reation $\sim_\sme$ is almost identical to the equivalence relation $\sim_\cut$. More precisely, for all $x\in\rll$ consider the quasi-cut $D_x$ in $\gq$, whose initial and final segments are:
$$
D_x^L=\{a\in\gq\mid a\le x\},\qquad D_x^R=\{a\in\gq\mid a\ge x\}.
$$

\begin{lemma}
For all $x,y\in\rll$, we have $x\sim_\sme y$ if and only if  $D_x=D_y$.
\end{lemma}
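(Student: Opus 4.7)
The plan is to show that the small-extension equivalence $\sim_\sme$ depends only on the position of $x$ relative to the divisible hull $\gq$, which is exactly the data recorded by the quasi-cut $D_x$. The key technical point is that although $\sim_\sme$ is defined in terms of subgroups generated by $\g$ (not $\gq$), any order comparison between $x$ and an element of $\gq$ can be ``cleared of denominators'' to a comparison between a multiple of $x$ and an element of $\g$, because $\gq$ is divisible.

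For the implication $x\sim_\sme y \Rightarrow D_x=D_y$, I would take an order-preserving isomorphism $\varphi\colon\gen{\g,x}\iso\gen{\g,y}$ fixing $\g$ and sending $x$ to $y$, and check that $D_x^L=D_y^L$ and $D_x^R=D_y^R$. Given any $a\in\gq$, choose $n\in\N$ with $na\in\g$; then
\[
a\le x\ \sii\ na\le nx\ \sii\ \varphi(na)\le\varphi(nx)\ \sii\ na\le ny\ \sii\ a\le y,
\]
and the symmetric chain proves $D_x^R=D_y^R$.

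For the converse $D_x=D_y\Rightarrow x\sim_\sme y$, I would distinguish two cases. If $x\in\gq$, then $x\in D_x^L\cap D_x^R=D_y^L\cap D_y^R$ forces $y=x$ (any element of $\gq$ that is simultaneously $\le y$ and $\ge y$ must equal $y$, and conversely if $y\notin\gq$ this intersection would be empty), and then the identity serves as $\varphi$. If $x\notin\gq$, then by the same argument $y\notin\gq$, and moreover no nonzero multiple of $x$ (resp.~$y$) lies in $\g$, so $\gen{\g,x}=\g\oplus\Z x$ and $\gen{\g,y}=\g\oplus\Z y$ as abelian groups. I define $\varphi(a+mx)=a+my$; this is automatically a well-defined group isomorphism fixing $\g$ and sending $x$ to $y$.

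The remaining (and only mildly subtle) step is verifying that $\varphi$ is order-preserving. For $m>0$ the inequality $a+mx\ge 0$ rearranges to $x\ge -a/m$, and since $-a/m\in\gq$ while $x\notin\gq$, this is equivalent to $-a/m\in D_x^L$, hence to $-a/m\in D_y^L$, hence to $a+my\ge 0$; the case $m<0$ is handled identically via $D_x^R=D_y^R$, and $m=0$ is trivial. The main obstacle to watch out for is the well-definedness of $\varphi$ in the borderline case where $x$ happens to be commensurable with $\g$, which is why isolating the case $x\in\gq$ at the outset (and forcing $y=x$ from the quasi-cut data) is essential; once that case is dispatched, the direct-sum decomposition of $\gen{\g,x}$ makes everything formal.
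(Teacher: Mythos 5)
Your proof is correct and follows essentially the same route as the paper: the paper handles the case $x\in\gq$ via the rigidity of commensurable extensions and reduces the case $x,y\notin\gq$ to Proposition \ref{Xd=cuts} with $\d=\g$, whose proof is exactly your denominator-clearing argument $a\le x\sii na\le nx\sii na\le ny\sii a\le y$. You have simply inlined that proposition's content (the direct-sum decomposition $\gen{\g,x}=\g\oplus\Z x$ and the order-preservation check), which is fine.
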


\begin{proof}
Take $x\in\gq$. Then, for all $y\in\rll$ we have
$$
x\sim_\sme y\ \sii\ x=y \ \sii\ D_x=D_y.
$$
The first equivalence follows from the fact that two subgroups $\d,\d'\subset\gq$ are $\g$-isomorphic as ordered groups, only when $\d=\d'$ and the isomorphism between them is the identity.

Finally, for $x,y\not\in\gq$, the lemma follows from Proposition \ref{Xd=cuts}, in the particular case $\d=\g$.
\end{proof}

As a consequence, we may take
$$
\gsme=\gq\sqcup \gnbc\sqcup \gbc,
$$
with the total ordering induced by that of $\rll$.
We derive a natural isomorphism of ordered sets:
$$\gsme\lra \op{Qcuts}(\gq),\qquad x\longmapsto D_x.$$  

As an immediate consequence of this isomorphism, we deduce that $\gsme$ is complete and $\gq$ is dense in $\gsme$, with respect to the order topology. Indeed, it is well known that the ordered set $\op{Qcuts}(\gq)$ has these properties.


\begin{thebibliography}{}

\bibitem{AFFGNR}M. Alberich-Carrami$\tilde{\mbox{n}}$ana, Alberto F. Boix, J. Fern\'andez, J. Gu\`ardia, E. Nart, J. Ro\'e, \emph{Of limit key polynomials}, Illin. J. Math. {\bf 65}, No.1 (2021), 201--229.


\bibitem{AGNR}M. Alberich-Carrami$\tilde{\mbox{n}}$ana, J. Gu\`ardia, E. Nart, J. Ro\'e, \emph{Valuative trees over valued fields}, preprint, arXiv:2107.09813 [math.AG].



\bibitem{Kuhl}F.-V. Kuhlmann, \emph{Value groups, residue fields, and bad places of rational function fields}, Trans. Amer. Math. Soc. {\bf 356} (2004), no. 11, 4559--4660. 

\bibitem{KuhlCuts}F.-V. Kuhlmann, \emph{Invariance group and invariance valuation ring of a cut}, unpublished manuscript, (2011). 

\bibitem{KuhlCuts2}F.-V. Kuhlmann, \emph{Selected methods for the classification of cuts and their applications}, Proceedings of the ALANT5 conference 2018, Banach Center Publications, {\bf 121} (2020), 85--106.

\bibitem{KuhlBook}F.-V. Kuhlmann, \emph{Valuation theory}, book in preparation. Preliminary version of several chapters available at {\tt http://math.usask.ca/\raise.6ex\hbox{\mbox{\tiny$\sim$}}fvk/Fvkbook.htm}

\bibitem{KKMZ}F.-V. Kuhlmann, S. Kuhlmann, M. Marshall and M. Zekavat, \emph{Embedding 
ordered fields in formal power series fields}, J. Pure Appl. Algebra 169 (2002), 71--90.





\bibitem{Rib} P. Ribenboim, \emph{Th\'eorie des valuations}, Presses Univ. Montr\'eal, Montr\'eal, 1968. 






\end{thebibliography}
\end{document}